\documentclass{article}

\usepackage{amsthm, amsmath, amsfonts, mathrsfs, amssymb, bbm}

\usepackage[colorlinks=true, citecolor=violet, linkcolor=blue, filecolor=magenta, urlcolor=cyan]{hyperref}
\usepackage{geometry}

\makeatletter
\newcommand*\rel@kern[1]{\kern#1\dimexpr\macc@kerna}
\newcommand*\widebar[1]{%
	\begingroup
	\def\mathaccent##1##2{%
		\rel@kern{0.8}%
		\overline{\rel@kern{-0.8}\macc@nucleus\rel@kern{0.2}}%
		\rel@kern{-0.2}%
	}%
	\macc@depth\@ne
	\let\math@bgroup\@empty \let\math@egroup\macc@set@skewchar
	\mathsurround\z@ \frozen@everymath{\mathgroup\macc@group\relax}%
	\macc@set@skewchar\relax
	\let\mathaccentV\macc@nested@a
	\macc@nested@a\relax111{#1}%
	\endgroup
}
\makeatother

\makeatletter
\def\keywords{\xdef\@thefnmark{}\@footnotetext}
\makeatother

\usepackage{cite}
\usepackage{enumitem}
\usepackage{cleveref}
\usepackage{authblk}
\usepackage{xcolor}

\newtheorem{thm}{Theorem}[section]
\newtheorem*{thm*}{Theorem}
\newtheorem{prop}[thm]{Proposition}
\newtheorem{coro}[thm]{Corollary}

\newtheorem{lem}[thm]{Lemma}
\theoremstyle{definition}
\newtheorem*{assum*}{Assumption}
\newtheorem{defn}[thm]{Definition}

\theoremstyle{remark}
\newtheorem{rmk}[thm]{Remark}

\makeatletter
\def\cref@thmoptarg[#1]#2#3#4{%
    \ifhmode\unskip\unskip\par\fi%
    \normalfont%
    \trivlist%
    \let\thmheadnl\relax%
    \let\thm@swap\@gobble%
    \thm@notefont{\fontseries\mddefault\upshape}%
    \thm@headpunct{.}
    \thm@headsep 5\p@ plus\p@ minus\p@\relax%
    \thm@space@setup%
    #2
    \@topsep \thm@preskip               
    \@topsepadd \thm@postskip           
    \def\@tempa{#3}\ifx\@empty\@tempa%
      \def\@tempa{\@oparg{\@begintheorem{#4}{}}[]}%
    \else%
      \refstepcounter[#1]{#3}
      \@namedef{cref@#3@alias}{#1}
      \def\@tempa{\@oparg{\@begintheorem{#4}{\csname the#3\endcsname}}[]}%
    \fi%
    \@tempa}%
\makeatother

\DeclareMathAlphabet\matheuvm{U}{zeur}{m}{n}
\newcommand{\di}{\matheuvm{D}}
\newcommand{\diff}{\operatorname{d}}

\newcommand{\citeinfo}[2]{\cite[#1]{#2}}

\title{Absolute continuity of generalized Wasserstein barycenters of finitely many measures}
\author{Jianyu Ma \quad
	}
\affil{Institut de Mathématiques de Toulouse}
\date{\today}

\begin{document}
\maketitle


\begin{abstract}
    Consider a complete Riemannian manifold $(M, \matheuvm{g})$
    and optimal transport problems on it
    with cost functions of the form $c(x,y) = h(d_{\matheuvm{g}}(x,y))$.
	We study the absolute continuity of the corresponding generalized Wasserstein barycenters
    of finitely many marginal measures.
    For general strictly convex profiles $h$ lacking $\mathcal{C}^2$-smoothness,
    such as $h(d)= d^p / p$ with $1 < p < 2$ that defines the $p$-Wasserstein space, 
    the singularity at $d=0$ prevents
    the barycenter from inheriting absolute continuity from a single marginal measure
    as the quadratic case.
    To overcome this singularity,
    recent Euclidean results \cite{brizzi2025p, brizzi2026h} necessitate the absolute continuity of all marginals.
    Building upon the approximation framework toward absolute continuity in \cite{ma2025absolute},
    we extend the Euclidean advancements to the manifold setting.
    Stripping away the implicit reliance on flat translational symmetry
    and local coordinate calculations of their Euclidean proofs,
    our work handles the singularity in a geometrically transparent way,
    revealing the precise analytic condition on the cost profile that governs the necessary assumptions.
\end{abstract}

\keywords{\\ \textsc{Keywords}: Optimal transports, Riemannian manifolds, Wasserstein barycenters }

\renewcommand\contentsname{}
\vspace{-1cm}
\tableofcontents

\section{Introduction}

Barycenter is the notion of mean for probability measures on metric spaces.
Given a set of probability measures $\mu_1, \ldots, \mu_n$ on a metric space $(E, d)$
with positive weights $\lambda_i > 0$ satisfying $\sum_{i=1}^n \lambda_i = 1$,
their (Wasserstein) barycenter is typically defined as a minimizer of the functional
\[
	\nu \mapsto \sum \lambda_i \, W_2(\nu, \mu_i)^2,
\]
where $W_2$ denotes the $2$-Wasserstein distance of probability measures.
On Euclidean spaces $\mathbb{R}^m$, Agueh and Carlier \cite{agueh2011barycenters} established the fundamental theory for these barycenters,
showing their existence, uniqueness, and absolute continuity when at least one marginal is absolutely continuous.
These results were subsequently extended to Riemannian manifolds by Kim and Pass \cite{kim2017wasserstein}
for the compact case and by the author \cite{ma2025absolute} for the general non-compact case,
who exploited the specific geometric properties of the squared distance function $d^2/2$.

Recall that the Wasserstein distance is derived from the optimal transport problem of Monge and Kantorovich.
Given two probability measures $\mu$ and $\nu$ on $E$, one seeks a transport plan, i.e.,
a probability measure $\gamma$ on $E \times E$ with marginals $\mu$ and $\nu$,
that moves $\mu$ to $\nu$ while, in addition, minimizing the total cost $\int c(x,y) \diff \gamma(x,y)$.
When the cost is a power of the distance, this defines a metric on the space of probability measures.
The geometry of this Wasserstein space, and consequently the regularity of its barycenters, is intimately linked to the analytical properties of the underlying cost function.

In recent years, attention has turned to generalizing this concept beyond the quadratic cost.
As pointed out by Carlier during the author's defense \cite{ma2025thesis}, in the Euclidean setting, Brizzi, Friesecke, and Ried have extensively studied $p$-Wasserstein barycenters \cite{brizzi2025p} and, more generally, $h$-Wasserstein barycenters \cite{brizzi2026h},
where the squared distance is replaced by $|x-y|^p$ ($p > 1$)
or $h(x - y)$ with $h$ satisfying certain properties such as strict convexity and $\mathcal{C}^2$-smoothness.

Extending these results to general costs on Riemannian manifolds
$(M, \matheuvm{g})$ presents fundamental analytic and geometric obstructions.
As in the quadratic case \cite[Proposition 2.8]{ma2025absolute}, if $\mu_1$ is absolutely continuous,
then $\mathbb{P} := \sum_{i=1}^n \lambda_i \, \delta_{\mu_i}$ has a unique generalized Wasserstein barycenter $\mu_\mathbb{P}: = B_{\#} \gamma$, where $\gamma$ is a multi-marginal transport plan for the measures $\mu_1, \ldots, \mu_n$
with respect to the generalized barycenter cost function on $M^n$,
\[
	C(x_1, \ldots, x_n) := \inf_{y \in M} \sum_{i=1}^n \lambda_i\, h(d_{\matheuvm g}(y, x_i)),
\]
and $B$ is a measurable map sending a configuration $\matheuvm x = (x_1, \ldots, x_n) \in M^n$ to a barycenter $z$ of $\sum_{i=1}^n \lambda_i\, \delta_{x_i}$.
In the Euclidean case, the analytic singularity at the diagonal $d_{\matheuvm{g}}(z, x_i) = \| z - x_i \|= 0$ (where general costs fail to be smooth) is handled by reducing
the barycenter map $B: M^n \rightarrow M$ on these collision regions $z = x_i$ to marginal projections $(x_1, \ldots, x_n) \mapsto x_i$ \cite[Lemma 3.4]{brizzi2025p}. To ensure these singular sets carry no distorted mass, they assume that \emph{all} marginals are absolutely continuous. However, outside these singularities, establishing inverse-Lipschitz bounds \cite[Proposition 3.2]{brizzi2025p} for the barycenter map heavily relies on implicit differentiation that exploits flat translational symmetry \cite[(5.5) in Remark 5.1]{brizzi2026h}. This reliance on multi-dimensional coordinate bounds irreparably fails on curved Riemannian manifolds due to the twisting of Jacobi fields.

In this article, we present a new, geometrically transparent framework that encompasses these Euclidean advancements into the discrete approximation strategy developed in \cite{ma2025absolute}.
By approximating $\mu_i$ for $2 \le i \le n$ with discrete measures, we disintegrate the support of $\gamma$ into conditional slices where the discrete points $(x_2, \ldots, x_n)$ act as fixed spatial anchors. This reduces the problem to bounding the local differential of the inverse transport map $F: z \mapsto x_1$ on these slices. Crucially, this entirely frees the proof from heavy coordinate calculations
and implicit flat-space requirements,
allowing us to handle it efficiently and formulate the natural additional assumptions it demands.

Our proof is thus separated into three steps:
reduce the absolute continuity to the discrete case via approximation, establish local Lipschitz control of $F$, and handle singular sets by decomposing $\operatorname{supp}(\gamma)$.
This framework improves the fundamental understanding of both Euclidean and Riemannian cases,
and is highly adaptable for future investigations.
Specifically, it yields two distinct resolutions to the collision singularity, which directly govern the measure-theoretic assumptions for $\mu_1, \ldots, \mu_n$:
\begin{enumerate}
    \item \emph{Analytical Smoothing:} If the cost profile $h$ is locally flat at the origin, satisfying $h''(0) := \lim_{t \downarrow 0} (h'(t) - h^{\prime}(0))/t = \lim_{t \downarrow 0} h''(t)$ (e.g., $h(d) = d^p/p$ for $p \ge 2$),
        the geometric singularity is neutralized thanks to
        the classical approximation of 
        $\operatorname{Hess}_{z} d_{\matheuvm{g}}(z, x_i)$ when $z \rightarrow x_i$.
        The Hessian extends continuously across the collision, preserving Lipschitz continuity directly. This powerful regularity allows us to require \emph{only a single absolutely continuous marginal $\mu_1$}.
	\item \emph{Geometric Exclusion:} When the cost lacks this origin smoothness (e.g., $h(d) = d^p/p$ for $1 < p < 2$), the singularity cannot be analytically smoothed. Instead, we must spatially exclude the collision regions to restore Lipschitz bounds. To ensure these discarded singular sets carry no mass, we adapt the Euclidean assumption to require \emph{all marginals $\mu_2, \ldots, \mu_n$ to be absolutely continuous}.
\end{enumerate}
By synthesizing these tools, we establish the absolute continuity of generalized Wasserstein barycenters on Riemannian manifolds in full generality.

\begin{thm*}
	Let $(M, \matheuvm{g})$ be a complete Riemannian manifold.
	Suppose that $h: [0, \infty) \to[0, \infty)$
    is $\mathcal{C}^2$ on $(0, +\infty)$, strictly increasing and
    strictly convex with $h(0) = 0$ and $h^{\prime}(0): = \lim_{t \downarrow 0} h(t) / t = 0$.
	Given an integer $n \ge 2$,
	let $\mu_1, \ldots, \mu_n$ be $n$ probability measures
	with positive weights $\lambda_i > 0$ such that
    $\int_{M} h(d_{\matheuvm{g}}(x_0, x)) \diff \mu_i < +\infty$
    for some point $x_0 \in M$ and $\sum_{i=1}^n \lambda_i = 1$.
	If $\mu_1$ is absolutely continuous with respect to $\operatorname{Vol}$,
	then $\mathbb{P} := \sum_{i=1}^n \lambda_i \, \delta_{\mu_i}$ has
    a unique generalized Wasserstein barycenter $\mu_\mathbb{P}$,
    and it is also absolutely continuous in one of the following two different cases:
	\begin{enumerate}
        \item $h$ is $\mathcal{C}^2$-smooth on $[0, +\infty)$, i.e., $h^{\prime\prime}(0) := \lim_{t \downarrow 0} \frac{h^\prime(t)}{t} = \lim_{t \downarrow 0} h^{\prime\prime}(t)$;
		\item measures $\mu_2, \ldots, \mu_n$ are absolutely continuous.
	\end{enumerate}
\end{thm*}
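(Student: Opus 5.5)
The argument runs in three stages, as outlined in the introduction.

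\emph{Stage 1: existence, uniqueness, and reduction to discrete marginals.} Existence and uniqueness follow the quadratic case \cite[Proposition 2.8]{ma2025absolute}. The multi-marginal problem with cost $C$ has a minimizer $\gamma$ by tightness, since the moments $\int h(d_{\matheuvm g}(x_0,\cdot))\,\diff\mu_i<\infty$ together with the triangle inequality and convexity of $h$ bound $C$. Because $h$ is strictly convex and increasing with $h'(0)=0$, the pairwise cost $h\circ d_{\matheuvm g}$ satisfies a twist condition away from the cut locus. Absolute continuity of $\mu_1$ then makes $\gamma$ concentrate on a graph over $x_1$, and $\mu_{\mathbb P}=B_\#\gamma$ is the unique barycenter. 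For absolute continuity I would use the approximation scheme of \cite{ma2025absolute}. Replace $\mu_2,\dots,\mu_n$ by discrete measures $\mu_i^k\to\mu_i$. In case 2, use instead discretizations that keep an absolutely continuous part, or mollified versions, so the hypothesis survives. The goal is a uniform estimate
\[
\mu^k_{\mathbb P}(A)\le C_K\,\operatorname{Vol}(A)^{\alpha}\quad\text{or}\quad \mu^k_{\mathbb P}\le C_K\,\|\mu_1\|\text{-type density bounds on compacts } K,
\]
independent of $k$. It must be stable under weak convergence: barycenters of the approximations converge to $\mu_{\mathbb P}$ by stability of optimal plans, and an estimate of the form $\mu^k_{\mathbb P}(U)\le \omega(\mu_1(F^{-1}U))$ passes to the limit on open sets.

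\emph{Stage 2: Lipschitz control of the inverse map on slices.} With $(x_2,\dots,x_n)$ ranging over finitely many anchors, $\gamma^k$ splits into slices. On each slice, a barycenter $z$ is a critical point of $y\mapsto\sum_i\lambda_i h(d_{\matheuvm g}(y,x_i))$, so
\[
\lambda_1 h'(d(z,x_1))\,\nabla_z d(z,x_1)=-\sum_{i\ge2}\lambda_i h'(d(z,x_i))\,\nabla_z d(z,x_i).
\]
This determines $x_1=F(z)=\exp_z\!\big(v(z)\big)$ explicitly, with $|v|=(h')^{-1}(|\cdot|/\lambda_1)$. I would show $F$ is locally Lipschitz on the set where $z\ne x_i$ for all $i$ and $z$ is not in the cut locus of the $x_i$. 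Then $\mu^k_{\mathbb P}$ restricted to the slice is $F$-pushed back from $\mu_1$, and the bound $\mu_{\mathbb P}^k(A)\le \mu_1(F(A))\lesssim$ (density of $\mu_1$)$\cdot\mathrm{Lip}(F)^m\operatorname{Vol}(A)$ is uniform if the Lipschitz constants are uniform. Uniformity comes from minimality: optimality (second-order condition at the minimizer $z$) gives $\sum_i\lambda_i\operatorname{Hess}_z h(d(z,x_i))\ge0$. Combined with the first-order condition, this controls the differential of $F$ via Jacobi field comparison on compact sets. Strict convexity of $h$ makes the $x_1$-term of the Hessian nondegenerate, which is what allows one to invert the differential.

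\emph{Stage 3: collisions.} This is the main obstacle. In case 1, $h\in\mathcal C^2[0,\infty)$ and $h'(t)/t\to h''(0)$. Using $\operatorname{Hess}_z \tfrac12 d^2(z,x_i)\to \mathrm{Id}$ as $z\to x_i$, one gets
\[
\operatorname{Hess}_z h(d)=h''(d)\,\nabla d\otimes\nabla d+\frac{h'(d)}{d}\,d\operatorname{Hess} d.
\]
This extends continuously to $h''(0)\,\mathrm{Id}$ at the collision, so the estimate of Stage 2 holds across $z=x_i$ with no extra hypothesis. In case 2, the Hessian may blow up or degenerate at $z=x_i$, so I would decompose $\operatorname{supp}\gamma$ into the sets $\{B(\matheuvm x)=x_i\}$ and their complement. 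On $\{B=x_i\}$, the barycenter map is the projection onto $x_i$, so the mass is dominated by $\mu_i$, which is absolutely continuous. For $i=1$, it is dominated by $\mu_1$. On the complement, exhaust by $\{d(B,x_i)\ge\varepsilon\ \forall i\}$ and apply Stage 2 there. The delicate point is making the Stage 1 approximation compatible with this decomposition, since discretizing $\mu_i$ destroys its absolute continuity. I would try to handle it by performing the exclusion on the limiting plan directly: use the uniform estimate on the $\varepsilon$-separated region, which is an open condition and hence survives the limit, and then let $\varepsilon\to0$.
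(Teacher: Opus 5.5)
Your architecture matches the paper's: discretize $\mu_2,\dots,\mu_n$, read $x_1=F_{\matheuvm v}(z)$ off the first-order condition on each anchored slice, extend $\operatorname{Hess}_z h(d_{\matheuvm g}(z,x_i))$ continuously by $h''(0)\matheuvm g$ in case 1, and charge the collision sets $\{z=x_i\}$ to $\mu_i(N)$.

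The device you end with for case 2 works and is somewhat leaner than the paper's. The paper restricts $\widetilde\gamma=(\operatorname{Id},B)_\#\gamma$ to $\{d_{\matheuvm g}(z,x_i)>1/k\ \forall i\}$, renormalizes, uses the restriction property to get a new barycenter problem, and reruns the approximation there. You run one approximation and use that $\{d_{\matheuvm g}(z,x_i)>\varepsilon\ \forall i\}\cap(M^n\times U)$ is open, so its mass is lower semicontinuous. For that you need weak convergence of the lifted plans $(\operatorname{Id},B)_\#\gamma^k$, not only of $\mu^k_{\mathbb P}$. Any subsequential limit suffices, since its $z$-marginal is $\mu_{\mathbb P}$ (consistency plus uniqueness) and its $x_i$-marginals are $\mu_i$. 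Collisions of the approximating plans with atoms are then harmless. Drop the ``mollified'' variant in Stage 1, because slicing needs atomic anchors.

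The genuine gap is the non-compact case, which the statement covers: the $\mu_i$ are only assumed to have finite $h$-moment. Your uniform Lipschitz constant exists only over compact parts of $\Gamma$. The convergence $\mu^k_{\mathbb P}\rightharpoonup\mu_{\mathbb P}$ that you attribute to stability is proved in the paper only when all $\mu_i,\mu_i^k$ lie in one compact set. For unbounded supports and a general convex $h$ (no doubling), optimality of the limit plan and convergence of the costs under discretization are not automatic. The paper handles this with a separate reduction:
\begin{itemize}
\item pick radii so that the spheres in $M^n$ around a fixed $\matheuvm x_0$ are $\gamma$-null, and split $\gamma=\sum_j\gamma(K_j)\gamma_j$ over the resulting compact annuli;
\item by the restriction property each $\gamma_j$ is optimal for its compactly supported marginals, with absolutely continuous first marginal (all of them in case 2);
\item so $B_\#\gamma_j$ is the unique barycenter of that subproblem and is absolutely continuous by the compact case, whence $\mu_{\mathbb P}=\sum_j\gamma(K_j)B_\#\gamma_j$.
\end{itemize}
You need this step, or a genuine non-compact consistency argument, before Stages 1--3 apply.

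Three further corrections.
\begin{itemize}
\item In case 1 you must still remove $\{z=x_1\}$. There $V_{\matheuvm v}(z)=0$, and if $h''(0)=0$ (e.g.\ $h=d^p/p$, $p>2$) then $(h')^{-1}$ is not Lipschitz at $0$. The paper exhausts by $\{d_{\matheuvm g}(z,x_1)>1/k\}$ and uses $\mu_1(N)=0$, as you do for $i=1$ in case 2.
\item $\mu_1$ need not have bounded density, and a general $L^1$ density admits no bound $C\operatorname{Vol}(A)^\alpha$. The estimate to carry is $\mu_1\in\mathcal E_{\epsilon,\delta}\Rightarrow\mu_{\mathbb P}\in\mathcal E_{\epsilon,\delta/L^m}$. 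This also uses that the slice domains are disjoint, by injectivity of configurations.
\item Uniformity of $L$ does not come from the second-order condition, which is never used. Nor does it come from nondegeneracy of $\operatorname{Hess}_z h(d_{\matheuvm g}(z,x_1))$, which fails in positive curvature: on the round sphere its transverse eigenvalue is $h'(r)\cot r$, zero at $r=\pi/2$. Since $F_{\matheuvm v}=\exp_z\circ W\circ V_{\matheuvm v}$ is explicit, no matrix needs inverting. $L$ comes from joint $\mathcal C^1$ regularity of $(\matheuvm v,y)\mapsto\exp_y W(\nabla_y f(\matheuvm v,y))$ on a compact neighbourhood of the relevant $(\matheuvm v,z)$. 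That neighbourhood avoids all cut loci (closedness of $\Gamma$ plus cut-locus avoidance) and has $\|V\|\ge h'(\alpha)/2$.
\end{itemize}
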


The paper is organized as follows.
Notation and preliminaries are introduced in \cref{sec:preliminaries}.
In \cref{sec:optimal_transport_general_cost}, for self-completeness, we provide a detailed proof of McCann's theorem \cite[Theorem 13]{mccann2001polar} for general costs $h(d)$ on complete Riemannian manifolds.
In \cref{sec:multi_marginal}, we discuss the properties of multi-marginal plans, specifically the injectivity of the barycenter map and the avoidance of the cut locus.
\Cref{sec:absolute_continuity} is the core proof, where we apply the decomposition strategy to establish absolute continuity via approximation.

\section{Preliminaries}

\label{sec:preliminaries}

In this section, we establish the preliminary results and notation required for our analysis.
We shall briefly review the necessary background in Riemannian geometry and measure theory.
Moreover, we specify the assumptions on the cost function $h$,
and establish some topological and measurable properties of the associated generalized barycenters.

\hspace{1cm}

Throughout this article, $(M, \matheuvm{g})$ is always a complete Riemannian manifold without boundary, not necessarily compact.
We denote by $d_{\matheuvm{g}}(\cdot, \cdot)$ the intrinsic geodesic distance on $M$.
For a point $x \in M$, $T_x M$ denotes the tangent space at $x$, and $\exp_x: T_x M \to M$ is the exponential map.
The cut locus of $x$ is denoted by $\operatorname{Cut}(x)$ \cite[Definition 4.3]{sakai1996riemannian}.
We denote by $\nabla$ the gradient operator associated with the metric $\matheuvm{g}$.
$\operatorname{Vol}$ denotes the Riemannian volume measure on $M$.

For maps $F: M \rightarrow M^\prime$ between two manifolds, we denote
by $\di F: TM \rightarrow TM^\prime$ its differential.
For a $\mathcal{C}^2$ map $f: M \rightarrow \mathbb{R}$,
its Hessian at $x \in M$, denoted by $\operatorname{Hess}_x f: T_xM \times T_xM \rightarrow \mathbb{R}$,
is the covariant derivative of $\di f$ at $x$ \cite[Proposition 2.2.6]{petersen2016riemannian}.

\subsection{The cost function}

We consider cost functions generated by a profile function $h$.
The following assumptions, adapted from \cite{kim2015multi} and \cite{mccann2001polar},
are imposed on $h$ throughout the paper.

\begin{assum*}
	Let $h: [0, \infty) \to [0, \infty)$ be a function satisfying the following assumptions,
	\begin{enumerate}[label=({H\arabic*})]
		\item
		      \label{assumption:h_at_zero}
		      $h(0) = 0$;
		\item
		      \label{assumption:h_C_2_smooth}
		      $h$ is $\mathcal{C}^2$ on $(0, \infty)$
		      and $h^\prime(0) : = \lim_{t \downarrow 0} \frac{h(t)}{t} = 0$;
		\item
		      \label{assumption:h_strict_convexity}
		      $h$ is strictly increasing ($h^{\prime}(t) > 0$) and strictly convex ($h^{\prime \prime}(t) > 0$) for $t > 0$.
	\end{enumerate}
\end{assum*}

We define the cost function $c: M \times M \to [0, \infty)$ by
\begin{equation}
	\label{defn:function_c}
	c(x, y) := h(d_{\matheuvm{g}}(x, y)).
\end{equation}
While the classical case $h(d) := d^2/2$ is smooth at the diagonal $x=y$, general costs $h(d)$ (such as $d^p$ for $1 < p < 2$) may lack smoothness at the diagonal.
However, away from the cut locus pairs and the diagonal, $c$ inherits the regularity of the distance function \cite[Proposition 4.8]{sakai1996riemannian}.

\subsection{Barycenters on proper metric spaces}

Since $(M, \matheuvm{g})$ is a complete Riemannian manifold, the metric space $(M, d_{\matheuvm{g}})$ is proper (i.e., closed bounded sets are compact).
To clarify the proofs,
we shall state the following definitions and properties for general proper metric spaces.

\begin{defn}[$h$-barycenters and their configurations for finitely many points]
    \label{defn:barycenter_configuration}
	Let $(E, d)$ be a proper metric space
	and let $\mu$ be a probability measure on $E$
	such that $\int_E h(d(x_0, x)) \diff \mu(x) < \infty$ for some $x_0 \in E$.
	A point $z \in E$ is called an \emph{$h$-barycenter} (or simply, barycenter) of $\mu$
    if it minimizes the following function over $y \in E$,
	\[
		y \mapsto \int_E h(d(y, x)) \diff \mu(x).
	\]

	In the specific case where $\mu = \sum_{i=1}^n \lambda_i\, \delta_{x_i}$ is a finitely supported
	probability measure with weights $\lambda_i > 0$,
	we denote its set of barycenters by
	\[
		\operatorname{bary}_{\lambda_1, \ldots, \lambda_n}(\{ (x_1, \ldots, x_n) \}).
	\]
	We call $\matheuvm {x} := (x_1, \ldots, x_n)$ with the associated weights
	a (barycenter) \emph{configuration}.
	The weights $\{ \lambda_i \}_{1 \le i \le n}$ under consideration
	are commonly clear from the context, for which we also employ
	the simplified symbol $\operatorname{bary}(\{\matheuvm{x}\})$.
	For a given subset $A \subset E^n$, we further define
	\[
		\operatorname{bary}(A) := \{ z \in E \mid
		z \text{ is a $h$-barycenter of } \sum_{i=1}^n \lambda_i\, \delta_{x_i}
		\text{ and }  \matheuvm{x} = (x_1, \ldots, x_n) \in A \}.
	\]
\end{defn}

The topological properties of the set of barycenters established in \cite{ma2025absolute} for the quadratic cost extend to the general cost $h(d)$.
Unlike the quadratic case, we do not rely on the metric structure of the Wasserstein space $\mathcal{W}_2(E)$, but derive these properties directly from the continuity and convexity of $h$ and the properness of the underlying metric space.

Recall that $h: [0, \infty) \to [0, \infty)$ satisfies \ref{assumption:h_at_zero}-\ref{assumption:h_strict_convexity}.
We remark that \ref{assumption:h_strict_convexity} implies the \emph{coercivity} of $h$, i.e.,
$\lim_{t \to \infty} h(t) = \infty$.
Indeed, as $h^{\prime \prime} > 0$,
\begin{equation}
	\label{equa:coercivity_implied_by_h3}
	h(t) \ge h'(1)(t-1) + h(1), \quad \forall \, t \ge 1,
\end{equation}
which further implies the coercivity as $h'(1) > 0$.

\begin{lem}[Compactness and measurable selection of barycenters]
	\label{lem:barycenter_properties}
	Let $(E, d)$ be a proper metric space.
    Fix $n \ge 2$ positive weights $\lambda_i > 0$ satisfying $\sum_{i=1}^n \lambda_i = 1$.
	Define the objective function $\Phi: E^n \times E \to [0, \infty)$ by
	\[
		\Phi(\matheuvm{x}, y) := \sum_{i=1}^n \lambda_i\,
		h(d(y, x_i)), \quad \text{for } \matheuvm{x} = (x_1, \ldots, x_n) \in E^n.
	\]
	The following five statements are true.
	\begin{enumerate}
		\item The function $m(\matheuvm{x}) := \inf_{y \in E} \Phi(\matheuvm{x}, y)$ is continuous on $E^n$.
		\item For any $\matheuvm{x} = (x_1, \ldots, x_n) \in E$, the set of barycenters
		      $\operatorname{bary}(\{ \matheuvm{x} \})$ is non-empty.
		\item The set of minimizers $\Gamma := \{ (\matheuvm{x}, y) \in E^n \times E \mid \Phi(\matheuvm{x}, y) = m(\matheuvm{x}) \}$ is a closed subset of $E^n \times E$.
		\item For any compact subset $A \subset E^n$, the set of all barycenters
		      \[
			      \operatorname{bary}(A) = \{ y \in E \mid \exists\, \matheuvm{x} \in A, (\matheuvm{x}, y) \in \Gamma \}
		      \]
		      is compact.
		\item There exists a Borel measurable map $B: E^n \to E$ such that $B(\matheuvm{x})$ is a barycenter of $\sum \lambda_i \delta_{x_i}$ for every $\matheuvm{x} \in E^n$.
	\end{enumerate}
\end{lem}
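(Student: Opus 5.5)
The plan rests on one localization estimate, obtained from coercivity, after which the five items follow by standard compactness arguments. Fix $\matheuvm{x}\in E^n$ and put $R(\matheuvm{x}) := \max_i d(x_1,x_i)$. Taking $y=x_1$ gives $m(\matheuvm{x}) \le \Phi(\matheuvm{x},x_1)\le h(R(\matheuvm{x}))$, since $h$ is increasing. For any $y$ we have $\Phi(\matheuvm{x},y)\ge \lambda_1 h(d(y,x_1))$. Hence, by coercivity \eqref{equa:coercivity_implied_by_h3}, there is a radius $\rho$ with the following property: every $y$ satisfying $\Phi(\matheuvm{x},y)\le m(\matheuvm{x})+1$ lies in the closed ball $\bar B(x_1,\rho)$. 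The radius $\rho$ depends only on an upper bound for $R(\matheuvm{x})$, so it is uniform over bounded sets of configurations. Since $E$ is proper, these balls are compact. This uniform localization is the key step; once it is in place, the rest is routine.

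For (2), $\Phi(\matheuvm{x},\cdot)$ is continuous, because $h$ is continuous on $[0,\infty)$: it is $\mathcal{C}^2$ on $(0,\infty)$, and $h(t)\to 0$ as $t\downarrow0$ follows from \ref{assumption:h_C_2_smooth}. Minimizing over the compact ball $\bar B(x_1,\rho)$ therefore attains the infimum, so $\operatorname{bary}(\{\matheuvm{x}\})\neq\emptyset$.

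For (1), $m$ is an infimum of continuous functions, hence upper semicontinuous. For lower semicontinuity, take $\matheuvm{x}^k\to\matheuvm{x}$ and choose minimizers $y_k$ by (2). The sets $R(\matheuvm{x}^k)$ are bounded, so all $y_k$ lie in a common compact ball. Pass to a subsequence realizing $\liminf_k m(\matheuvm{x}^k)$ along which $y_k\to y$. Joint continuity of $\Phi$ then gives $m(\matheuvm{x})\le \Phi(\matheuvm{x},y)=\lim_k m(\matheuvm{x}^k)$, which is the required inequality.

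Item (3) is immediate: $\Gamma$ is the zero set of the continuous function $\Phi-m$.

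For (4), let $A$ be compact. Then $R$ is bounded on $A$, so $\operatorname{bary}(A)$ lies in a single compact ball. Moreover, $\operatorname{bary}(A)$ is the projection onto $E$ of $\Gamma\cap(A\times E)$. This intersection is closed and contained in the compact set $A\times \bar B$, so it is compact, and its continuous projection is compact as well.

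For (5), I would invoke a measurable selection theorem. One option is the Kuratowski–Ryll-Nardzewski theorem applied to the multifunction $\matheuvm{x}\mapsto \operatorname{bary}(\{\matheuvm{x}\})$. Its values are nonempty and compact by (2) and (4). Its graph $\Gamma$ is closed and the multifunction is locally bounded, so it is upper hemicontinuous, and hence weakly measurable. Here $E$ is Polish: a proper metric space is complete and $\sigma$-compact, hence separable. The theorem then yields a Borel selector $B$.

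Alternatively, I could use a selection theorem for closed graphs with $\sigma$-compact sections. The main point to check is the upper hemicontinuity, which comes from combining closedness of $\Gamma$ with the uniform localization established at the start.
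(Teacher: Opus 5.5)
Your proposal is correct and follows essentially the same route as the paper. Both arguments use a uniform coercivity bound that places near-minimizers, for configurations in a bounded set, inside a single compact ball; routine compactness arguments then give items 1--4, and the Kuratowski--Ryll-Nardzewski theorem gives item 5. The differences are cosmetic: you localize around $x_1$ rather than a fixed reference point $o$, prove continuity of $m$ through upper and lower semicontinuity rather than uniform continuity on $U\times K$, and obtain weak measurability from upper hemicontinuity of the closed-graph, locally bounded multifunction, whereas the paper writes open sets as countable unions of compacta and projects $\Gamma\cap(E^n\times K)$.
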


\begin{proof}
	First, we establish a uniform coercivity property.
	Let $A \subset E^n$ be a bounded set. Fix a reference point $o \in E$. There exists $R > 0$ such that $d(x_i, o) \le R$ for all $\matheuvm{x} \in A$ and all $i$.
	By the triangle inequality, $d(x_i, y) \ge d(y, o) - R$.
	Thus, $\Phi(\matheuvm{x}, y) \ge \sum \lambda_i\, h(\max(0, d(y, o) - R))$.
	As \ref{assumption:h_strict_convexity} and (\ref{equa:coercivity_implied_by_h3})
	imply that $\lim_{t \rightarrow \infty} h(t) = \infty$, we have
	\[
		\Phi(\matheuvm{x}, y) \to \infty \text{ as } d(y, o) \to \infty,
		\quad \text{uniformly for $\matheuvm{x} \in A$}.
	\]
	Consequently, for any $l \in \mathbb{R}$,
	the projection of the set $\{(\matheuvm{x}, y) \in A \times E \mid \Phi(\matheuvm{x}, y) \le l \}$
	onto $E$ is bounded.
	Since $E$ is proper, the closure of this projection is compact.

	Proof of Statement 1.
	Since $h$ is continuous, $\Phi$ is continuous.
	Fix $\matheuvm{x}_0 \in E^n$. Let $U$ be a compact neighborhood of $\matheuvm{x}_0$.
	Let $y_0$ be an arbitrary point. We have
	\[
		\forall\, \matheuvm{x} \in U,\quad
		m(\matheuvm{x}) \le \Phi(\matheuvm{x}, y_0) \le
		\sup_{\matheuvm{a} \in U} \Phi(\matheuvm{a}, y_0) =: C.
	\]
	By the uniform coercivity established above,
	there exists a compact set $K \subset E$ such that for any $\matheuvm{x} \in U$,
	if $\Phi(\matheuvm{x}, y) \le C$, then $y \in K$.
	Therefore, for $\matheuvm{x} \in U$, $m(\matheuvm{x}) = \inf_{y \in K} \Phi(\matheuvm{x}, y)$.
	Since $\Phi$ is continuous on the compact set $U \times K$,
	the function $\Phi(\cdot , y)$ defined on $U$ is thus uniformly continuous for $y \in K$.
	It follows that the marginal function $m(\matheuvm{x})$ is continuous on $U$, and thus on $E^n$.

	Proof of Statement 2.
	Fix an arbitrary point $\matheuvm{x} \in E^n$.
	Consider the function $f_{\matheuvm{x}}: y \mapsto \Phi(\matheuvm{x}, y)$ defined on $E$.
	It follows from the (uniform) coercivity of $\Phi$ implies that,
	for any $y_0 \in E$, the sub-level set
	$K = \{ y \in E \mid f_{\matheuvm{x}}(y) \le f_{\matheuvm{x}}(y_0) \}$ is bounded.
	Since $E$ is a proper metric space, $K$ is compact.
	By the Weierstrass theorem, the continuous function $f_{\matheuvm{x}}$ attains its global minimum on the compact set $K$.
	Thus, $\operatorname{bary}(\{ \matheuvm{x} \})$ is non-empty.

	Proof of Statement 3.
	The graph $\Gamma$ is the zero level set of the function $(\matheuvm{x}, y) \mapsto \Phi(\matheuvm{x}, y) - m(\matheuvm{x})$.
	Since both $\Phi$ and $m$ are continuous, this function is continuous, and thus $\Gamma$ is a closed set.

	Proof of Statement 4.
	Let $A \subset E^n$ be compact.
	The set of barycenters is the projection $\operatorname{bary}(A) = p_2(\Gamma \cap (A \times E))$, where $p_2: A \times E \to E$ is the canonical projection.
	Since $A$ is compact, the projection map $p_2$ is a \emph{closed map} \cite[Proposition 8.2]{bredon2013topology}, i.e.,
	$p_2$ maps closed sets to closed sets.
	Since $\Gamma$ is closed, $\Gamma \cap (A \times E)$ is closed in $A \times E$.
	Therefore, its image $\operatorname{bary}(A)$ is a closed subset of $E$.
	Furthermore, the function $m(\matheuvm{x})$ attains a maximum $C$ on the compact set $A$.
	Thus, if $y \in \operatorname{bary}(A)$, there exists $\matheuvm{x} \in A$ such that $\Phi(\matheuvm{x}, y) = m(\matheuvm{x}) \le C$.
	By the uniform coercivity property, the set of such $y$ is bounded.
	Since $\operatorname{bary}(A)$ is closed and bounded in a proper metric space, it is compact.

	Proof of Statement 5.
	We apply the Kuratowski and Ryll-Nardzewski measurable selection theorem (cf.\@ \cite[Theorem 2.4]{ma2025absolute}).
	We must show that the set-valued map
	$\Psi(\matheuvm{x}) = \{y \in E \mid (\matheuvm{x}, y) \in \Gamma\}$
	is weakly measurable, i.e.,
	for any open set $U \subset E$,
	the set $\{\matheuvm{x} \mid \Psi(\matheuvm{x}) \cap U \neq \emptyset\}$ is a Borel set.
	Since $E$ is a proper metric space, any open set $U$ is a countable union of compact sets $K_j$
	\cite[proof of Lemma 2.6]{ma2025absolute} \cite[Lemma 1.6]{ma2025thesis}.
	Thus, it suffices to prove that for any compact set $K \subset E$, the set $S_K = \{\matheuvm{x} \mid \Psi(\matheuvm{x}) \cap K \neq \emptyset\}$ is Borel.
	Observe that $S_K = p_1(\Gamma \cap (E^n \times K))$, where $p_1: E^n \times K \to E^n$ is the projection.
	Since $K$ is compact, the map $p_1$ is a closed map.
	As $\Gamma$ is closed, $\Gamma \cap (E^n \times K)$ is thus closed.
	Therefore, the image $S_K$ is a closed subset of $E^n$, and hence Borel.
	Since $\Psi$ has non-empty closed values and satisfies the measurability condition, a measurable selection $B$ exists.
\end{proof}

\subsubsection*{Notation of measures on Polish spaces}

Let $(E, d)$ be a Polish (i.e., complete and separable) metric space.
We denote by $\mathcal{B}(E)$ the Borel $\sigma$-algebra of $E$ and by $\mathcal{P}(E)$ the set of all probability measures on $E$.
The support of a probability measure $\mu \in \mathcal{P}(E)$,
denoted by $\operatorname{supp}(\mu)$, is the smallest closed set to which $\mu$ assigns full mass.

\section{Optimal transport with general cost functions}

\label{sec:optimal_transport_general_cost}

We begin by briefly recalling the optimal transport problem of Monge and Kantorovich, which provides the foundation for the Wasserstein distances.
Given two probability measures $\mu$ and $\nu$ on a metric space $(E, d)$,
the Monge problem seeks a transport map $T: E \to E$ pushing $\mu$ forward to $\nu$ (i.e., $T_{\#} \mu = \nu$) that minimizes the total transportation cost $\int_E c(x, T(x)) \diff \mu(x)$.
The Kantorovich relaxation generalizes this by considering transport plans $\gamma \in \Pi(\mu, \nu)$, which are probability measures on $E \times E$ with marginals $\mu$ and $\nu$ in this order.
The optimal transport cost is defined as:
\begin{equation}
    \label{equa:transport_cost}
	\mathcal{T}_c(\mu, \nu) := \inf_{\gamma \in \Pi(\mu, \nu)} \int_{E \times E} c(x, y) \diff \gamma(x, y).
\end{equation}
When the cost is a power of the distance, $c(x,y) = d_{\matheuvm{g}}(x,y)^p$, the $p$-th root of this value defines the $p$-Wasserstein distance $W_p(\mu, \nu)$.
Classical references on this topic are \cite{villani2009optimal}, \cite{Santambrogio2015}, and \cite{villani2021topics}.

Back to our specific setting of a complete Riemannian manifold $(M, \matheuvm{g})$,
for notational convenience,
we denote by $\mathcal{W}_h(M)$ the subset of $\mathcal{P}(M)$ with finite $h$-moment, i.e.,
\begin{equation}
	\label{equa:defn_W_h}
	\mathcal{W}_h(M) := \left\{ \mu \in \mathcal{P}(M) \;\middle|\;
	\int_M h(d_{\matheuvm{g}}(x_0, x)) \diff \mu(x) < \infty \text{ for some } x_0 \in M \right\}.
\end{equation}
Since $h$ is increasing \ref{assumption:h_strict_convexity}, the triangle inequality implies that the above definition of $\mathcal{W}_h(M)$ is independent of $x_0$.
As we primarily work with compactly supported measures in the subsequent sections to establish regularity, the topological subtleties of $\mathcal{W}_h$ are not central to our proofs.

In this section, we discuss the existence and uniqueness of optimal transport maps for the cost function $c(x, y) = h(d_{\matheuvm{g}}(x, y))$. The celebrated work of McCann \cite{mccann2001polar} establishes this primarily for the quadratic cost $d^2/2$, while \cite[Theorem 13]{mccann2001polar} formulates the corresponding result for general strictly convex costs between compactly supported measures. For the sake of self-completeness, we provide here a detailed proof of this theorem, explicitly detailing the arguments for our specific setting.

\subsection{\texorpdfstring{$c$}{c}-concavity}

The structure of optimal transport maps is governed by the notion of $c$-concavity.
We adopt the notation $\mathcal{I}^c(X, Y)$ \cite[Definition 3.1]{cordero2001riemannian} for the set of $c$-concave functions defined on a domain $X$ relative to a target set $Y$.

\begin{defn}[$c$-transforms and $c$-concave functions]
	Let $(M, \matheuvm{g})$ be a Riemannian manifold.
	Let \( X \) and \( Y \) be two non-empty compact subsets of \( M \).
	A function \( \psi \) : \( X \rightarrow \mathbb { R }\)
	is $c$-concave if there exists a function
	\( \xi : Y \rightarrow \mathbb { R } \) such that
	\begin{equation}
		\label{defn:c_transform}
        \psi ( x ) = \inf _ { y \in Y } \{ c ( x , y ) - \xi ( y ) \}, \quad \forall x \in X.
	\end{equation}
	We write it as \( \psi = \xi ^ { c } \) and call $\psi$ the $c$-transform of $\xi$.
	The set of all $c$-concave functions with respect to $X$ and $Y$
	is denoted by $\mathcal{I}^c(X, Y)$.
\end{defn}

Since the cost function $c(x, y) = h(d_{\matheuvm{g}}(x, y))$ involves the Riemannian distance, it is generally not smooth everywhere due to the presence of the cut locus.
However, distance functions on manifolds possess a one-sided regularity known as super-differentiability.
Recall that a function $f: M \to \mathbb{R}$ is \emph{super-differentiable} at $x$ if there exists a vector $v \in T_x M$, called a \emph{super-gradient}, such that
\begin{equation}
    \label{equa:defn_super_differentiable}
	f(\exp_x u) \le f(x) + \matheuvm{g}_x(v, u) + o(\|u\|) \quad \text{as } u \to 0 \text{ in } T_x M.
\end{equation}
We denote by $\partial^+ f(x)$ the set of super-gradients at $x$.
The following chain rule allows us to transfer the regularity of the distance to the cost,
thanks to a standard first-order Taylor expansion of $g$ combined with the super-differentiability inequality for $f$,
whose direction is further persevered as $g$ is assumed to be non-decreasing.

\begin{lem}[Chain rule for super-gradients, \citeinfo{Lemma 5}{mccann2001polar}]
	\label{lem:chain_rule}
	Let $(M, \matheuvm{g})$ be a Riemannian manifold.
    Fix a point $x \in M$.
	Let $f: M \to \mathbb{R}$ be super-differentiable at $x$ with $v \in \partial^+ f(x)$.
	Let $g: \mathbb{R} \to \mathbb{R}$ be a non-decreasing function that is differentiable at $f(x)$.
	Then $g \circ f$ is super-differentiable at $x$, and $g'(f(x))v \in \partial^+ (g \circ f)(x)$.
\end{lem}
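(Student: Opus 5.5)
The plan is to combine the super-differentiability inequality for $f$ with a first-order Taylor expansion of $g$ at $t_0 := f(x)$, using monotonicity of $g$ to keep the inequality pointing the right way. Since $v \in \partial^+ f(x)$, there is a function $\varepsilon(u) = o(\|u\|)$ such that
\[
	f(\exp_x u) \le f(x) + \matheuvm{g}_x(v,u) + \varepsilon(u).
\]
Because $g$ is non-decreasing, applying it to both sides gives
\[
	g(f(\exp_x u)) \le g\bigl(t_0 + s(u)\bigr), \qquad s(u) := \matheuvm{g}_x(v,u) + \varepsilon(u).
\]
This is the only place monotonicity enters. It is essential, since without it the upper bound on $f$ would not transfer to an upper bound on $g\circ f$.

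Next, I would use the differentiability of $g$ at $t_0$ to write
\[
	g(t_0 + s) = g(t_0) + g'(t_0)\, s + \eta(s)\, s,
\]
where $\eta(s) \to 0$ as $s \to 0$, and we set $\eta(0) = 0$. Substituting $s = s(u)$ yields
\[
	g(f(\exp_x u)) \le g(f(x)) + \matheuvm{g}_x\bigl(g'(f(x))\,v,\, u\bigr) + g'(t_0)\,\varepsilon(u) + \eta(s(u))\, s(u).
\]

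It remains to check that the last two terms are $o(\|u\|)$. The term $g'(t_0)\varepsilon(u)$ is clearly $o(\|u\|)$. For the other term, note that $|s(u)| \le \|v\|\,\|u\| + |\varepsilon(u)| = O(\|u\|)$, so $s(u) \to 0$ as $u \to 0$. Hence $\eta(s(u)) \to 0$, and $\eta(s(u))\,s(u) = o(\|u\|)$. This establishes \eqref{equa:defn_super_differentiable} for $g\circ f$ with super-gradient $g'(f(x))v$.

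The only mildly delicate point is handling the remainder when $s(u)$ may vanish or change sign. Writing the Taylor remainder in the form $\eta(s)\,s$ with $\eta(0)=0$ avoids dividing by $s$, so no case distinction is needed.
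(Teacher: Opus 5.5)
Your proof is correct and follows the route the paper indicates (the paper only sketches this argument, citing McCann's Lemma 5): you push the super-differentiability inequality for $f$ through the non-decreasing $g$, then Taylor-expand $g$ at $f(x)$ and check that the remainder is $o(\|u\|)$. Writing the Taylor remainder as $\eta(s)\,s$ with $\eta(0)=0$ is a clean way to handle the case where $s(u)$ vanishes.
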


We now establish the regularity of the cost function $x \mapsto c(x, y)$ for a fixed $y$.
Crucially, while the squared distance $d^2$ is smooth at the diagonal $x=y$, the general cost $h(d)$ may not be.
However, away from the diagonal, the cost inherits the super-differentiability of the distance function.

\begin{lem}[Super-differentiability of the cost]
	\label{lem:cost_super-diff}
	Let $(M, \matheuvm{g})$ be a complete Riemannian manifold.
	Fix a point $y \in M$.
	The function $\psi_y(x) := h(d_{\matheuvm{g}}(x, y))$ is super-differentiable at every point $x \in M \setminus \{y\}$.
	If $\sigma: [0, d_{\matheuvm{g}}(x,y)] \to M$ is any unit-speed minimal geodesic from $x$ to $y$, then
	\[
		v := - h'(d_{\matheuvm{g}}(x, y)) \dot{\sigma}(0) \in \partial^+ \psi_y(x).
	\]
\end{lem}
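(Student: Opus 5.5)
Since $\psi_y = h \circ f$ with $f(x) := d_{\matheuvm{g}}(x, y)$, the plan is to deduce the statement from the chain rule \cref{lem:chain_rule}, once we know that the distance function $f$ has $-\dot\sigma(0)$ as a super-gradient at every $x \neq y$.

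The key step is to show $-\dot{\sigma}(0) \in \partial^+ f(x)$ for any unit-speed minimal geodesic $\sigma$ from $x$ to $y$. Set $L := d_{\matheuvm{g}}(x,y) > 0$ and fix a small $\varepsilon \in (0, L)$. Let $q := \sigma(\varepsilon)$; this point lies on a minimal geodesic strictly before its endpoint $y$. The triangle inequality gives, for every $x'$,
\[
f(x') \le d_{\matheuvm{g}}(x', q) + d_{\matheuvm{g}}(q, y) = d_{\matheuvm{g}}(x', q) + L - \varepsilon,
\]
with equality at $x' = x$. So $f$ is bounded above by the function $x' \mapsto d_{\matheuvm{g}}(x', q) + L - \varepsilon$, which touches $f$ at $x$. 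Because $d_{\matheuvm{g}}(x, q) = \varepsilon$ is small, we can take $\varepsilon$ below the injectivity radius at $x$. Then $q$ is not in $\operatorname{Cut}(x)$ and $q \neq x$, so $d_{\matheuvm{g}}(\cdot, q)$ is smooth near $x$ \cite[Proposition 4.8]{sakai1996riemannian}. Its gradient at $x$ is the negative of the unit initial velocity of the geodesic from $x$ to $q$, which is $-\dot\sigma(0)$ by uniqueness of the minimal geodesic inside the injectivity radius. A first-order Taylor expansion of this smooth upper bound then yields
\[
f(\exp_x u) \le f(x) + \matheuvm{g}_x(-\dot\sigma(0), u) + o(\|u\|),
\]
which is exactly the super-differentiability inequality \eqref{equa:defn_super_differentiable}.

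To apply \cref{lem:chain_rule}, extend $h$ to a non-decreasing function $g$ on $\mathbb{R}$, for instance by $g(t) = 0$ for $t<0$. Since $f(x) = L > 0$, assumption \ref{assumption:h_C_2_smooth} makes $g$ differentiable at $f(x)$ with $g'(L) = h'(L)$. The chain rule then gives $-h'(L)\dot\sigma(0) \in \partial^+\psi_y(x)$, as claimed.

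The only delicate point is producing the smooth upper support function despite the cut locus: $x$ itself may lie in $\operatorname{Cut}(y)$. This is handled by moving the base point from $y$ to the nearby point $q$ on the geodesic, which replaces any global smoothness requirement on $d_{\matheuvm{g}}(\cdot, y)$ by a purely local one near $x$.
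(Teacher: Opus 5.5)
Your proof is correct, but it takes a different route from the paper. The paper gets the super-gradient of $f = d_{\matheuvm{g}}(\cdot, y)$ from the first variation of arc length. For each fixed $w \in T_xM$ it builds a variation of $\sigma$ whose initial point $\exp_x(sw)$ moves and whose endpoint $y$ stays fixed. It then reads off $d_{\matheuvm{g}}(\exp_x(sw), y) \le L(s) = d_{\matheuvm{g}}(x,y) - s\,\matheuvm{g}_x(\dot\sigma(0), w) + o(s)$.

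You instead use the triangle inequality through $q = \sigma(\varepsilon)$, with $\varepsilon$ below the injectivity radius at $x$. This produces the smooth function $d_{\matheuvm{g}}(\cdot, q) + d_{\matheuvm{g}}(x,y) - \varepsilon$, which touches $f$ from above at $x$ and has gradient $-\dot\sigma(0)$ there.

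\textbf{What your route buys.} The remainder in \eqref{equa:defn_super_differentiable} comes out directly from Taylor's theorem as a genuine $o(\|u\|)$, uniform in the direction of $u$. The same touching function would even give a one-sided second-order bound, i.e.\ local semiconcavity of $f$ away from $y$. The paper's variation works direction by direction, so its $o(s)$ a priori depends on $w$. Upgrading it to the uniform statement tacitly uses that $f$ is $1$-Lipschitz and that $T_xM$ has a compact unit sphere.

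\textbf{What the paper's route buys.} It needs no facts about the injectivity radius or uniqueness of short minimal geodesics. It also avoids the symmetry $q \notin \operatorname{Cut}(x) \iff x \notin \operatorname{Cut}(q)$, which you use implicitly when invoking smoothness of $d_{\matheuvm{g}}(\cdot, q)$ near $x$.

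You also extend $h$ explicitly to a non-decreasing function on $\mathbb{R}$ before applying \Cref{lem:chain_rule}, since $h$ is only defined on $[0,\infty)$; the paper passes over this small point.
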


\begin{proof}
	We apply the chain rule (\Cref{lem:chain_rule}) to the distance function $f(x) := d_{\matheuvm{g}}(x, y)$.
	It is a classical result in Riemannian geometry that the distance function to a fixed point $y$ is super-differentiable on $M \setminus \{y\}$, as discussed in \cite[Proposition 6]{mccann2001polar}.
	Here, we provide a proof based on the first variational formula of arc length.

	Fix $x \in M \setminus \{y\}$. Let $\sigma: [0, \ell] \to M$ be a unit-speed minimal geodesic from $x$ to $y$, where $\ell := d_{\matheuvm{g}}(x, y)$.
	Let $u := \dot{\sigma}(0) \in T_x M$ be the initial velocity.
	To test super-differentiability, consider an arbitrary vector $w \in T_x M$.
	We construct a smooth variation $\alpha: [0, \ell] \times (-\epsilon, \epsilon) \to M$ with:
	\begin{enumerate}
		\item $\alpha(t, 0) = \sigma(t)$ for all $t \in [0, \ell]$;
		\item $\alpha(0, s) = \exp_x(s w)$ for $s \in (-\epsilon, \epsilon)$;
		\item $\alpha(\ell, s) = y$ for all $s \in (-\epsilon, \epsilon)$.
	\end{enumerate}
	Let $L(s) := \int_0^\ell \left\| \frac{\partial \alpha}{\partial t}(t, s) \right\| \diff t$ be the length of the curve $t \mapsto \alpha(t, s)$.
	By the definition of the Riemannian distance, we have the inequality
	\begin{equation}
		\label{eq:dist_length_ineq}
		d_{\matheuvm{g}}(\exp_x(s w), y) \le L(s).
	\end{equation}
	Let $V(t) := \frac{\partial \alpha}{\partial s}(t, 0)$ be the variational vector field along $\sigma$.
	From our construction, the boundary conditions are $V(0) = w$ and $V(\ell) = 0$.
	The first variation formula of arc length
	\cite[Theorem II.4.1]{chavel2006riemannian} \cite[(1.5) in Chapter 1]{cheeger2008comparison} gives:
	\[
		L'(0) = \left. \langle V(t), \dot{\sigma}(t) \rangle \right|_0^\ell - \int_0^\ell \langle V(t), \nabla_{\dot{\sigma}} \dot{\sigma}(t) \rangle \diff t.
	\]
	Since $\sigma$ is a geodesic, $\nabla_{\dot{\sigma}} \dot{\sigma} \equiv 0$. Substituting the boundary values, we obtain:
	\[
		L'(0) = \langle 0, \dot{\sigma}(\ell) \rangle - \langle w, \dot{\sigma}(0) \rangle = - \langle w, u \rangle.
	\]
	Combining this with \eqref{eq:dist_length_ineq}, we have
	\[
		d_{\matheuvm{g}}(\exp_x(s w), y) \le L(0) + s L'(0) + o(s) = d_{\matheuvm{g}}(x, y) - \matheuvm{g}_x(u, s w) + o(s).
	\]
	This confirms that $f(x)$ is super-differentiable with super-gradient $-u$.
	Finally, since $h$ satisfies the conditions of \Cref{lem:chain_rule}, $\psi_y = h \circ f$ is super-differentiable at $x$, and the vector $h'(f(x))(-u) = - h'(d_{\matheuvm{g}}(x, y)) \dot{\sigma}(0)$ is a super-gradient.
\end{proof}

To establish the existence of optimal maps, we examine the regularity of $c$-concave functions.

\begin{lem}[Lipschitz continuity of $c$-concave functions]
	\label{lem:lipschitz_potentials}
    Fix a compact set $Y \subset M$ 
    and a function $\zeta: Y \to \mathbb{R}$ defined on it.
    Define the function $\psi: M \to \mathbb{R}$ by
	\begin{equation}
        \label{equa:extended_c_concave_function}
		\psi(x) := \inf_{y \in Y} \{ h(d_{\matheuvm{g}}(x, y)) - \zeta(y) \},
        \quad x \in M.
	\end{equation}
	Then $\psi$ is locally Lipschitz continuous on $M$.
	In particular, for any compact subset $X \subset M$,
    $\psi^{\prime}(x)$ exists for $\operatorname{Vol}$-almost every $x \in X$
    and the restriction $\psi|_X$, belonging to the set $\mathcal{I}^c(X, Y)$,
    is Lipschitz continuous.
\end{lem}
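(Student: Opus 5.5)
We will prove local Lipschitz continuity of $\psi$ directly from the Lipschitz continuity of $h$ on bounded intervals, which avoids any appeal to super-differentiability. First we check that $\psi$ is finite everywhere; this is the step that needs care. Finiteness is implicit in the statement, since $\psi: M\to\mathbb{R}$, so we assume $\zeta$ is bounded above on $Y$, for instance because $\zeta$ arises as a $c$-transform or is upper semicontinuous on the compact $Y$. Then $\psi(x)\ge -\sup_Y\zeta>-\infty$ for every $x\in M$, and $\psi(x)\le h(d_{\matheuvm g}(x,y_0))-\zeta(y_0)<\infty$ for any fixed $y_0\in Y$.

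Fix a compact set $X\subset M$ and put
\[
R:=\max_{x\in X,\,y\in Y}d_{\matheuvm g}(x,y)<\infty ,
\]
which is finite by compactness of $X\times Y$ and continuity of $d_{\matheuvm g}$. By \ref{assumption:h_C_2_smooth} and \ref{assumption:h_strict_convexity}, $h$ is convex and nondecreasing on $[0,\infty)$ with $h'\ge0$ and $h'(0)=0$. Hence for $0\le s\le t\le R$,
\[
|h(t)-h(s)|\le h'(R)\,|t-s|,
\]
where we use monotonicity of $h'$ on $(0,R]$ and continuity of $h$ at $0$. For $x,x'\in X$ and $y\in Y$, the triangle inequality gives $|d_{\matheuvm g}(x,y)-d_{\matheuvm g}(x',y)|\le d_{\matheuvm g}(x,x')$. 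Therefore each function $x\mapsto h(d_{\matheuvm g}(x,y))-\zeta(y)$, for $y\in Y$, is $L$-Lipschitz on $X$ with $L:=h'(R)$, a constant independent of $y$. An infimum of a family of uniformly $L$-Lipschitz functions that is finite everywhere is again $L$-Lipschitz. Writing $\psi(x)\le h(d_{\matheuvm g}(x,y))-\zeta(y)\le h(d_{\matheuvm g}(x',y))-\zeta(y)+L\,d_{\matheuvm g}(x,x')$, taking the infimum over $y$, and then symmetrizing in $x,x'$ yields $|\psi(x)-\psi(x')|\le L\,d_{\matheuvm g}(x,x')$.

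Since every point of $M$ has a compact neighborhood, $\psi$ is locally Lipschitz on $M$. By construction $\psi|_X=(\zeta)^c$ with respect to $X$ and $Y$, so $\psi|_X\in\mathcal I^c(X,Y)$, and it is Lipschitz by the estimate above. For almost-everywhere differentiability, we cover $X$ by finitely many coordinate charts with compact closures. In such a chart $\psi$ is Lipschitz with respect to the Euclidean distance, because the Riemannian and Euclidean distances are locally comparable. Rademacher's theorem then gives differentiability Lebesgue-a.e. in each chart. Since $\operatorname{Vol}$ and Lebesgue measure are mutually absolutely continuous in charts, $\psi'(x)$ exists for $\operatorname{Vol}$-a.e. $x\in X$.
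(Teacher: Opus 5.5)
Your proof is correct and follows the paper's argument essentially step by step: the uniform constant $L = h'(R)$ for the family $x \mapsto h(d_{\matheuvm g}(x,y)) - \zeta(y)$ on a compact $X$, passage to the infimum, and Rademacher's theorem for a.e.\ differentiability. The differences are minor: for the infimum the paper picks an $\epsilon$-minimizer $y_z$ instead of subtracting finite infima (equivalent); you make explicit that $\zeta$ must be bounded above for $\psi$ to be real-valued, which the paper leaves implicit in writing $\psi: M \to \mathbb{R}$; and you prove Rademacher chart by chart, where the paper cites McCann's Lemma 4.
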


\begin{proof}
	Fix an arbitrarily chosen non-empty compact set $X \subset M$.
	Since $X$ and $Y$ are compact, the set of distances $\{ d_{\matheuvm{g}}(x, y) \mid x \in X, y \in Y \}$ is contained in a compact interval $[0, R]$ for some $R > 0$.
	We define the constant $L := h'(R)$.
	Since the derivative of $h$ is non-decreasing,
    $h$ is $L$-Lipschitz continuous on $[0, R]$
    thanks to the intermediate value theorem.
	
	Consider the family of functions $f_y(x) := h(d_{\matheuvm{g}}(x, y)) - \zeta(y)$ for $y \in Y$.
	For any $x, z \in X$ and any fixed $y \in Y$, we have
	\[
		|f_y(x) - f_y(z)| = |h(d_{\matheuvm{g}}(x, y)) - h(d_{\matheuvm{g}}(z, y))| \le L |d_{\matheuvm{g}}(x, y) - d_{\matheuvm{g}}(z, y)| \le L d_{\matheuvm{g}}(x, z),
	\]
	where the last inequality follows from the triangle inequality for the metric $d$.
	Thus, the family $\{f_y\}_{y \in Y}$ is uniformly $L$-Lipschitz on $X$.
	
	Now consider $\psi(x) = \inf_{y \in Y} f_y(x)$.
	Fix $x, z \in X$. For any $\epsilon > 0$, there exists $y_z \in Y$ such that $f_{y_z}(z) < \psi(z) + \epsilon$.
	Then
	\[
		\psi(x) \le f_{y_z}(x) \le f_{y_z}(z) + L d_{\matheuvm{g}}(x, z) < \psi(z) + L d_{\matheuvm{g}}(x, z) + \epsilon.
	\]
	Since $\epsilon$ is arbitrary, $\psi(x) - \psi(z) \le L d_{\matheuvm{g}}(x, z)$.
	Interchanging $x$ and $z$ yields $|\psi(x) - \psi(z)| \le L d_{\matheuvm{g}}(x, z)$.
	Therefore, $\psi$ is locally Lipschitz continuous on $M$.
    By Rademacher's theorem \cite[Lemma 4]{mccann2001polar}, $\psi$ is differentiable $\operatorname{Vol}$-almost everywhere.
\end{proof}

\begin{rmk}[$c$-concavity and semi-concavity]
	\label{rmk:lack_of_semi_concavity}
	Recall from \cite[Definition 3.4]{ma2025absolute} that a function $\psi: O \to \mathbb{R}$ defined on an open subset $O \subset M$ is \emph{locally semi-concave} if for every $x \in O$, there exists a geodesically convex neighborhood $U \subset O$ and a smooth function $V: U \to \mathbb{R}$ such that $\psi + V$ is geodesically concave on $U$.
	This property is highly desirable for non-smooth analysis and optimization because semi-concave functions admit a second-order expansion almost everywhere (Alexandrov's theorem \cite[Proposition 3.10]{ma2025absolute}).

	However, for general cost functions $h(d)$, $c$-concave potentials may fail to be semi-concave.
	Consider the case of the Euclidean space $\mathbb{R}^m$ with cost $h(d) = d^p/p$ for $1 < p < 2$.
	The Hessian of this function scales as $|x - y|^{p-2}$.
	Since $p < 2$, the eigenvalues of the Hessian tend to $+\infty$ as $x \to y$.
	This contradicts the definition of semi-concavity, which requires the Hessian to be bounded from above.
	Consequently, the potentials associated with $p$-Wasserstein metrics for $p < 2$ are not semi-concave.
    Note that if we assume stronger regularity (cf.\@\cite[Corollary C.5]{gangbo1996geometry}), specifically $h \in \mathcal{C}^2([0, \infty))$ (e.g., $p \ge 2$), one can prove semi-concavity by performing a Taylor expansion and exploiting the semi-concavity of the squared distance function (cf.\@ \cite[Lemma 3.11, Lemma 3.12]{cordero2001riemannian} and \cite[Theorem 6.1]{kim2015multi}).
\end{rmk}

\subsection{Optimal transport map for general costs}

The core of the Brenier-McCann theorems is relating the optimal transport map to the gradient of the potential.
The following ``Tangency Lemma'' generalizes \cite[Lemma 7]{mccann2001polar} to our cost $h(d)$.

\begin{lem}[Tangency Lemma]
	\label{lem:tangency}
	Let $(M, \matheuvm{g})$ be a Riemannian manifold.
    Fix two compact subsets $X, Y \subset M$.
    Let $\psi \in \mathcal{I}^c(X, Y)$ be a $c$-concave function
    and denote by $\psi^c \in \mathcal{I}^c(Y, X)$ its $c$-transform.
	Suppose that $\psi$ is differentiable at an interior point $x \in \mathring X$.
	If there exists $y \in Y$ ($y \neq x$) such that
	\begin{equation}
		\label{eq:contact}
		\psi(x) + \psi^c(y) = c(x, y),
	\end{equation}
	then the gradient $\nabla \psi(x) \neq 0 $ and
    $y$ is uniquely determined by it via the formula:
	\begin{equation}
		\label{eq:optimal_map_formula}
		y = \exp_x \left( - (h')^{-1}(\|\nabla \psi(x)\|) \frac{\nabla \psi(x)}{\|\nabla \psi(x)\|} \right).
	\end{equation}
\end{lem}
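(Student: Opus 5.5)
The plan is to follow McCann's argument: compare $\psi$ with the cost function $\psi_y = h(d_{\matheuvm{g}}(\cdot, y))$ at the contact point and use the super-gradient from \Cref{lem:cost_super-diff}.

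First I record the contact inequality. By definition of the $c$-transform, $\psi^c(y) = \inf_{x' \in X}\{c(x',y) - \psi(x')\}$. Hence for every $x' \in X$ we have $\psi(x') \le c(x', y) - \psi^c(y)$. Equality holds at $x' = x$ by \eqref{eq:contact}. So the function $x' \mapsto c(x',y) - \psi(x')$ attains its minimum over $X$ at the interior point $x$.

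Next I take a unit-speed minimal geodesic $\sigma$ from $x$ to $y$, which exists because $M$ is complete and $y \neq x$. Set $\ell := d_{\matheuvm{g}}(x,y) > 0$. By \Cref{lem:cost_super-diff}, for $u \in T_xM$ small,
\[
c(\exp_x u, y) \le c(x,y) - h'(\ell)\,\matheuvm{g}_x(\dot\sigma(0), u) + o(\|u\|).
\]
Since $\psi$ is differentiable at $x$, we also have
\[
\psi(\exp_x u) = \psi(x) + \matheuvm{g}_x(\nabla\psi(x), u) + o(\|u\|).
\]
Because $x$ is interior, $\exp_x u \in X$ for $u$ small, so the contact inequality gives $\psi(\exp_x u) - \psi(x) \le c(\exp_x u,y) - c(x,y)$. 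Combining the three, I obtain for all small $u$
\[
\matheuvm{g}_x\bigl(\nabla\psi(x) + h'(\ell)\dot\sigma(0), u\bigr) \le o(\|u\|).
\]
Replacing $u$ by $\pm t w$ and letting $t \downarrow 0$ forces $\nabla\psi(x) = -h'(\ell)\,\dot\sigma(0)$.

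Then I invert this relation. By (H3), $h'(\ell) > 0$ because $\ell > 0$, and $\|\dot\sigma(0)\| = 1$. Hence $\nabla\psi(x) \neq 0$ and $\|\nabla\psi(x)\| = h'(\ell)$. The map $h'$ is strictly increasing and continuous on $(0,\infty)$, so $\ell = (h')^{-1}(\|\nabla\psi(x)\|)$. The direction is $\dot\sigma(0) = -\nabla\psi(x)/\|\nabla\psi(x)\|$, and $y = \sigma(\ell) = \exp_x(\ell\,\dot\sigma(0))$, which is exactly \eqref{eq:optimal_map_formula}. The right-hand side depends only on $\nabla\psi(x)$, so $y$ is uniquely determined by it. As a by-product, the minimal geodesic from $x$ to $y$ is unique.

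I expect the main subtlety to be the first-order comparison step. Only a one-sided (super-differential) bound on the cost is available, since $y$ may lie in $\operatorname{Cut}(x)$. This is still sufficient because the inequality runs in the right direction: the cost lies above $\psi$ up to a constant, and the super-gradient gives an upper bound on the cost. Differentiability of $\psi$ then upgrades the resulting one-sided inequality to an equality. The interior assumption on $x$ is what allows testing in all directions $\pm w$.
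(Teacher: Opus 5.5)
Your proposal is correct and takes essentially the same route as the paper. Both arguments minimize $c(\cdot,y)-\psi$ at the interior contact point $x$, compare the super-gradient $-h'(\ell)\dot\sigma(0)$ from \Cref{lem:cost_super-diff} with $\nabla\psi(x)$ along the directions $\pm w$ to force equality, and then invert $\|\nabla\psi(x)\| = h'(\ell)$ using the strict monotonicity of $h'$ on $(0,\infty)$.
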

\begin{proof}
    Consider the function $F(z) := c(z, y) - \psi(z)$ defined on $X$.
    By the definition of the $c$-transform, $\psi^c(y) = \inf_{z \in X} F(z)$.
    The equality hypothesis \eqref{eq:contact} implies $F(x) = \psi^c(y)$; hence, $F$ attains a global minimum at $x$.
    Let $v \in T_x M$ be an arbitrary unit vector.
    Using the Taylor expansion of $\psi$ and the super-differentiability of the cost function (\Cref{lem:cost_super-diff}), we analyze the behavior of $F$ along the geodesic $\gamma(t) = \exp_x(t v)$ for small $t > 0$.
    
    Let $\sigma$ be any unit-speed minimal geodesic connecting $x$ to $y$.
    By \Cref{lem:cost_super-diff}, the vector $\xi := -h'(d_{\matheuvm{g}}(x,y)) \dot{\sigma}(0)$ is a super-gradient of $c(\cdot, y)$ at $x$.
    By definition \eqref{equa:defn_super_differentiable}, we have the upper bound
    \[
        c(\exp_x(tv), y) \le c(x, y) + t \matheuvm{g}_x( \xi, v ) + o(t).
    \]
    Conversely, the differentiability of $\psi$ at the interior point $x$ yields
    \[
        \psi(\exp_x(tv)) = \psi(x) + t \matheuvm{g}_x( \nabla \psi(x), v ) + o(t).
    \]
    Substituting these expansions into the minimality condition $F(\exp_x(tv)) \ge F(x)$, we obtain
    \[
        (c(x, y) + t \matheuvm{g}_x( \xi, v ) ) -
        (\psi(x) + t \matheuvm{g}_x( \nabla \psi(x), v )) + o(t) \ge c(x, y) - \psi(x).
    \]
    Simplifying and dividing by $t$ yields
    \[
        \matheuvm{g}_x( \xi - \nabla \psi(x), v ) \ge - \frac{o(t)}{t}.
    \]
    Letting $t \to 0$, we deduce $\matheuvm{g}_x( \xi - \nabla \psi(x), v ) \ge 0$.
    Since $v \in T_x M$ is arbitrary, we may replace $v$ with $-v$ to conclude that equality must hold:
    \begin{equation}
        \label{eq:grad_ident}
        \nabla \psi(x) = \xi = -h'(d_{\matheuvm{g}}(x, y)) \dot{\sigma}(0).
    \end{equation}
    
    This identity uniquely determines $y$. Taking the norm on both sides of \eqref{eq:grad_ident}, and recalling that $\|\dot{\sigma}(0)\|=1$ and $h'$ is positive on $(0, \infty)$ by \ref{assumption:h_C_2_smooth} and \ref{assumption:h_strict_convexity}, we recover the distance:
    \[
        \|\nabla \psi(x)\| = h'(d_{\matheuvm{g}}(x, y)) \implies d_{\matheuvm{g}}(x, y) = (h')^{-1}(\|\nabla \psi(x)\|),
    \]
    where the strict convexity \ref{assumption:h_strict_convexity} of $h$ ensures $h'$ is invertible.
    Note that \eqref{eq:grad_ident} implies $\nabla \phi (x) \neq 0$,
    and thus determines the unique initial direction of the minimal geodesic:
    \[
        \dot{\sigma}(0) = - \frac{\nabla \psi(x)}{\|\nabla \psi(x)\|}.
    \]
    Consequently, $y = \exp_x( d_{\matheuvm{g}}(x, y) \dot{\sigma}(0) )$ is uniquely determined by $\nabla \psi(x)$, proving \eqref{eq:optimal_map_formula}.
\end{proof}

\begin{rmk}
    The above proof shows that the first-order differentiability of $\psi$ at $x$ implies the uniqueness of the geodesic from $x$ to $y$, even in the case $y \in \operatorname{Cut}(x)$.
    Recall that in \cite[Proposition 4.1 (a)]{cordero2001riemannian}, it is shown for the quadratic case $h(d) = d^2 / 2$ that second-order differentiability of $\psi$ at $x$ implies $y \notin \operatorname{Cut}(x)$. Their argument relies on the property that the squared distance function fails to be semi-convex at the cut locus \cite[Proposition 2.5]{cordero2001riemannian}.
    This cut-locus avoidance property still holds for our general cost function. Indeed, the chain rule can be applied to the composition $h(d) = h(\sqrt{d^2})$. Since the distance $d_{\matheuvm{g}}(x, y)$ is strictly positive for any $y \in \operatorname{Cut}(x)$, the square root and possible sigularity of $h$ at $0$ pose no regularity issue. The Hessian of $h(d_{\matheuvm{g}}(\cdot, y))$ therefore inherits the singular behavior of the squared distance term, leading to the same conclusion.
\end{rmk}

We are now ready to state the main theorem of this section, which guarantees the existence and uniqueness of the optimal transport plan (map).
To address the interior point condition of \Cref{lem:tangency},
we consider two bounded open sets containing the measure supports, as in \cite[Theorem 4.2]{cordero2001riemannian}.

\begin{thm}[Optimal transport map for general costs, \citeinfo{Theorem 13}{mccann2001polar}]
	\label{thm:optimal_map}
	Let $(M, \matheuvm{g})$ be a complete Riemannian manifold and let $h$ satisfy assumptions
    \ref{assumption:h_at_zero}-\ref{assumption:h_strict_convexity}.
	Let $\mu, \nu \in \mathcal{W}_h(M)$ be two probability measures 
    whose supports are contained in two bounded open sets
    $\mathcal{X}, \mathcal{Y} \subset M$ respectively.
    Assume that $\mu$ is absolutely continuous with respect to $\operatorname{Vol}$.
	Then there exists a unique optimal transport plan $\gamma$ for the cost $c(x, y) = h(d_{\matheuvm{g}}(x, y))$, and it is induced by a map $T: M \to M$, i.e., $\gamma = (\operatorname{Id}, T)_{\#} \mu$.
	The map $T$ is uniquely determined $\mu$-almost everywhere by
    a $c$-concave function $\psi \in \mathcal{I}^c(\widebar{\mathcal{X}}, \widebar{\mathcal{Y}})$,
	\[
		T(x) = \exp_x \left( - (h')^{-1}(\|\nabla \psi(x)\|) \frac{\nabla \psi(x)}{\|\nabla \psi(x)\|} \right),
	\]
    where we define by convention that $T(x) = x$ if $\nabla \psi(x) = 0$.
\end{thm}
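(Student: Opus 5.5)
The argument follows the classical Brenier--McCann scheme via Kantorovich duality. First, existence. Since $\operatorname{supp}(\mu)\subset\mathcal{X}$ and $\operatorname{supp}(\nu)\subset\mathcal{Y}$ with $\widebar{\mathcal{X}},\widebar{\mathcal{Y}}$ compact, and $c$ is continuous and bounded on $\widebar{\mathcal{X}}\times\widebar{\mathcal{Y}}$, the set $\Pi(\mu,\nu)$ is weakly compact. The functional $\gamma\mapsto\int c\,\diff\gamma$ is weakly continuous on it, so an optimal plan exists.

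Next, the potential. Any optimal $\gamma$ has $c$-cyclically monotone support (standard for continuous costs). By the Rüschendorf/Knott--Smith construction, there is a $c$-concave $\psi\in\mathcal{I}^c(\widebar{\mathcal{X}},\widebar{\mathcal{Y}})$ such that $\operatorname{supp}(\gamma)\subset\{(x,y):\psi(x)+\psi^c(y)=c(x,y)\}$. Equivalently, one can invoke Kantorovich duality on compact sets and take a maximizing pair $(\psi,\psi^c)$, obtained by the double $c$-transform trick. I would extend $\psi$ to all of $M$ by formula \eqref{equa:extended_c_concave_function} with $\zeta=\psi^c$ and $Y=\widebar{\mathcal{Y}}$. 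By \Cref{lem:lipschitz_potentials} this extension is locally Lipschitz, so by Rademacher it is differentiable $\operatorname{Vol}$-a.e. Since $\mu\ll\operatorname{Vol}$ and $\mu(\mathcal{X})=1$ with $\mathcal{X}$ open, it follows that for $\mu$-a.e.\ $x$, the point $x$ is interior to $\widebar{\mathcal{X}}$ and $\psi$ is differentiable at $x$. The extended function agrees with $\psi$ on $\widebar{\mathcal X}$, because $\psi = (\psi^c)^c$ there.

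Then, identification of the map. Fix such a good $x$ and any $y$ with $(x,y)\in\operatorname{supp}(\gamma)$.
\begin{itemize}
\item If $y\neq x$, \Cref{lem:tangency} gives $\nabla\psi(x)\neq0$ and forces $y=T(x)$ with $T$ given by the stated formula.
\item If $y=x$, I have to show $\nabla\psi(x)=0$, so that the convention $T(x)=x$ is consistent. Here $z\mapsto c(z,x)-\psi(z)$ attains its minimum at $x$. Since $h'(0)=0$ and $h$ is increasing, $c(\exp_x(tv),x)=h(t)=o(t)$. Minimality then gives $-t\,\matheuvm g_x(\nabla\psi(x),v)+o(t)\ge0$ for all unit $v$, hence $\nabla\psi(x)=0$.
\end{itemize}
Both cases together show that for $\mu$-a.e.\ $x$ the fibre of $\operatorname{supp}(\gamma)$ over $x$ is the single point $T(x)$. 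The map $T$ is Borel because $\nabla\psi$ is (a limit of difference quotients), and $\exp$ and $(h')^{-1}$ are continuous. A disintegration argument then gives $\gamma=(\operatorname{Id},T)_\#\mu$.

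Finally, uniqueness. If $\gamma_1,\gamma_2$ are both optimal, then $\frac12(\gamma_1+\gamma_2)$ is also optimal, by linearity of the cost. So it is induced by a map by the previous step. A convex combination of $(\operatorname{Id},T_1)_\#\mu$ and $(\operatorname{Id},T_2)_\#\mu$ is induced by a map only if $T_1=T_2$ $\mu$-a.e., so $\gamma_1=\gamma_2$. The same argument shows $T$ is determined $\mu$-a.e.

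The main obstacle I expect is the first paragraph's duality step: producing $\psi$ as a genuine $c$-concave pair on the compact closures whose contact set contains $\operatorname{supp}(\gamma)$. The difficulty is ensuring real-valuedness and the interior-point requirement of \Cref{lem:tangency}. This is exactly why the open sets $\mathcal{X},\mathcal{Y}$ are introduced. I would handle it by the cyclical-monotonicity construction, restricting the infimum to the compact set $\widebar{\mathcal{Y}}$, so that finiteness follows from boundedness of $c$ there.
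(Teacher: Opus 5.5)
Your proposal is correct and follows essentially the same route as the paper. You use Kantorovich duality on $\widebar{\mathcal{X}}\times\widebar{\mathcal{Y}}$ to get a $c$-concave $\psi$ whose contact set contains $\operatorname{supp}(\gamma)$, obtain $\mu$-a.e.\ differentiability at interior points from \Cref{lem:lipschitz_potentials}, apply \Cref{lem:tangency} when $y\neq x$, and use $h'(0)=0$ to get $\nabla\psi(x)=0$ when $y=x$. The only difference is in the uniqueness step: the paper observes that one dual maximizer $\psi$ serves every optimal plan, so all of them sit on the graph of the same $T$, whereas your averaging trick with $\tfrac12(\gamma_1+\gamma_2)$ also works when $\psi$ is built from a particular $\gamma$ by the R\"uschendorf construction.
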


\begin{proof}
    Since $\mu, \nu$ have compact supports and $h$ is continuous and non-negative,
    the Kantorovich duality \cite[Theorem 5.10]{villani2009optimal} implies
	the existence of an optimal plan and a $c$-concave potential function
    $\psi \in \mathcal{I}^c(\widebar {\mathcal{X}}, \widebar {\mathcal{Y}})$ maximizing the dual problem.
	It further ensures that the support of any given optimal transport plan $\gamma$
    is contained in the $c$-super-differential of $\psi$, i.e.,
	\begin{equation}
        \label{equa:support_in_c_super_differential}
		\operatorname{supp}(\gamma) \subset \partial^c \psi :=
        \{ (x, y) \in \widebar{\mathcal{X}} \times \widebar{\mathcal{Y}}
        \mid \psi(x) + \psi^c(y) = c(x, y) \}.
	\end{equation}

    Let $D \subset \mathcal{X}$ be the set where $\psi$ is differentiable.
    According to \Cref{lem:lipschitz_potentials},
    $\mu(M \setminus D) = 0$ since $\mu$ is absolutely continuous.
    For any $x \in D$, if $(x, y) \in \operatorname{supp}(\gamma)$,
    then by \eqref{equa:support_in_c_super_differential},
    the pair satisfies the equality condition of \Cref{lem:tangency}.
    If $y=x$, then $\nabla \psi(x) = 0$ (as $c(\cdot, x)$ has gradient 0 at $x$), yielding $T(x) = x = y$. If $y \neq x$, \Cref{lem:tangency} directly implies $y = T(x)$.
    Hence, we have $y = T(x)$ for $\gamma$-almost every $(x, y)$.
    The measurability of $T$ follows from the measurability of the set $D$ and the gradient map
    $\nabla \psi$ defined on it \cite[Lemma 2]{mccann2001polar}.
    Hence, the equality $\gamma = (\operatorname{Id}, T)_{\#} \mu$ holds
    for any chosen optimal transport plan $\gamma$,
    which further implies the uniqueness of $\gamma$.
\end{proof}

\section{Multi-marginal optimal transport plans}

\label{sec:multi_marginal}

In this section, we study the structure of multi-marginal optimal transport plans related to the barycenter problem.
Given a finitely supported probability measure $\mathbb{P} :=\sum_{i=1}^n \lambda_i\, \delta_{\mu_i}$
on $\mathcal{W}_h(M)$ with $\lambda_i > 0$ and $n \ge 2$,
its barycenter problem,
\begin{equation}
    \label{equa:barycenter_problem}
    \text{minimize}\quad
    \nu \mapsto \sum_{i=1}^n \lambda_i\, \mathcal{T}_c(\nu, \mu_i)
    \quad \text{over }
    \nu \in \mathcal{W}_h(M),
\end{equation}
is intimately related to the following multi-marginal optimal transport problem on $M^n$:
\begin{equation}
	\label{eq:mmot_problem}
	\inf_{\gamma \in \Pi(\mu_1, \ldots, \mu_n)} \int_{M^n} C(\matheuvm{x}) \diff \gamma(\matheuvm{x}),
\end{equation}
where $\Pi(\mu_1, \ldots, \mu_n)$ denotes the set of all probability measures on $M^n$
with marginals $\mu_1, \ldots, \mu_n$ in this given order,
and the cost function $C: M^n \to [0, \infty)$ is defined by:
\begin{equation}
	\label{eq:barycenter_cost}
	C(x_1, \ldots, x_n) := \inf_{y \in M} \sum_{i=1}^n \lambda_i\, h(d_{\matheuvm{g}}(y, x_i)).
\end{equation}
For the quadratic case $h(d) = d^2 / 2$, as shown in \cite[Proposition 4.2]{agueh2011barycenters},
\cite[Section 5]{kim2015multi} and \cite[Theorem 8]{le2017existence} for different settings
of the base metric space,
if $\gamma$ minimizes \eqref{eq:mmot_problem} and $B(\matheuvm{x})$ is a measurable selection of the minimizer in \eqref{eq:barycenter_cost}, then the push-forward $B_{\#} \gamma$ is a Wasserstein barycenter of $\mathbb{P}$.

We begin with two fundamental properties of barycenters:
their avoidance of cut loci and injectivity for configurations (\Cref{defn:barycenter_configuration})
in the support of multi-marginal optimal transport plans.
These results are based on the work of Kim and Pass \cite{kim2015multi}.

\subsection{Regularity of barycenter configurations}

The differentiability of the distance function $y \mapsto d_{\matheuvm{g}}(x, y)$
fails at $\{ x \} \cup \operatorname{Cut}(x)$ \cite[Proposition 4.8]{sakai1996riemannian}.
To employ differential tools (gradients and Hessians), it is crucial to ensure for a given
configuration $\matheuvm x = (x_1, \ldots, x_n) \in M^n$, its barycenters avoid the cut loci of the marginal points $\{x_i\}_{1 \le i \le n}$.

The following lemma extends \cite[Lemma 3.1]{kim2015multi}.
In \cite[proof of Theorem 6.1]{kim2015multi}, this property is proven by contradiction,
assuming $z \in \operatorname{Cut}(x_i)$.
Regarding the possible regularity issues of $h$ at $0$,
note that only the second-order differentiability of
the map $y \mapsto h(d_{\matheuvm{g}}(y, x_i))$ at $y = z$ is used in the proof,
which is valid since $d(z, x_i) > 0$ for $z \in \operatorname{Cut}(x_i)$.

\begin{lem}[Avoidance of the cut loci]
	\label{lem:cut_locus_avoidance}
	Let $(M, \matheuvm{g})$ be a complete Riemannian manifold.
    Let $\mu = \sum_{i=1}^n \lambda_i \,\delta_{x_i}$ with $n \ge 2$ and $\lambda_i > 0$
    be a finitely supported probability measure on $M$.
    If $z$ is a $h$-barycenter of $\mu$,
	then for every $i \in \{1, \ldots, n\}$, $z \notin \operatorname{Cut}(x_i)$.
	In particular, the function $y \mapsto h(d_{\matheuvm{g}}(y, x_i))$ is smooth in a neighborhood of $z$ for all $i$.
\end{lem}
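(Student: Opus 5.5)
We argue by contradiction, following the strategy of \cite[Lemma 3.1, proof of Theorem 6.1]{kim2015multi}. Suppose $z$ is an $h$-barycenter of $\mu$ and $z \in \operatorname{Cut}(x_j)$ for some $j$. Then $z \neq x_j$, so $\ell := d_{\matheuvm{g}}(z, x_j) > 0$.

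The first step is to collect the two facts that make the contradiction work.
\begin{enumerate}
\item \emph{Upper bound for the other terms.} For each $i$, the function $f_i(y) := h(d_{\matheuvm{g}}(y, x_i))$ admits a smooth upper support function at $z$. If $z \neq x_i$, take a minimal geodesic from $x_i$ to $z$ and a point $x_i'$ on it close to $x_i$. By the triangle inequality, $d(y,x_i) \le d(y,x_i') + d(x_i',x_i)$. Since $z \notin \operatorname{Cut}(x_i')$ (a standard fact), $y \mapsto d(y,x_i')$ is smooth near $z$. As $h$ is increasing and $\mathcal{C}^2$ near $d(z,x_i) > 0$, the function $y \mapsto h(d(y,x_i') + d(x_i',x_i))$ is a $\mathcal{C}^2$ upper support of $f_i$ at $z$, touching it there. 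If $z = x_i$, then $f_i \le h(d(\cdot, x_i))$, and this function has a super-gradient $0$ at $z$. Near $z$ it is bounded by $o(d)$, since $h(t)=o(t)$ by \ref{assumption:h_C_2_smooth}; this needs care, see below.
\item \emph{Cut-locus singularity.} For the index $j$, a key fact is that $y \mapsto d(y,x_j)^2$ is not semi-convex at a cut point in the sense of \cite[Proposition 2.5]{cordero2001riemannian}. Concretely, there is a unit vector $v$ such that
\[
d(\exp_z(tv),x_j) + d(\exp_z(-tv),x_j) - 2\ell \le -\,\omega(t),
\]
with $\omega(t)/t^2 \to +\infty$. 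An alternative form, if two minimal geodesics exist, is $\le -ct$. By monotonicity and the $\mathcal{C}^2$ regularity of $h$ near $\ell>0$, the same second-difference defect transfers to $f_j$, scaled by $h'(\ell)>0$.
\end{enumerate}

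The second step sums the second differences. Set $\Phi(y) = \sum_i \lambda_i f_i(y)$. Minimality of $z$ gives
\[
\Phi(\exp_z(tv)) + \Phi(\exp_z(-tv)) - 2\Phi(z) \ge 0.
\]
On the other hand, consider each contribution in turn:
\begin{itemize}
\item The terms with $i\neq j$ and $z\neq x_i$ contribute at most $O(t^2)$, by their $\mathcal{C}^2$ upper supports.
\item The term $j$ contributes at most $-\lambda_j h'(\ell)\,\omega(t)(1+o(1))$.
\end{itemize}
So we get a contradiction as $t\to 0$, provided no $i$ has $x_i = z$.

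The main obstacle is the collision case $x_i = z$ for some $i \neq j$. There $f_i$ may have a second difference of size $2h(t)$, which could be as large as $t^p$ with $p<2$, and thus not $O(t^2)$. I would handle it in one of two ways, depending on the form of the cut-point defect:
\begin{itemize}
\item \emph{Conjugate or two-geodesic case.} When $z$ has two distinct minimal geodesics to $x_j$, the defect is first-order, $\le -ct$. This dominates $h(t)=o(t)$, so the contradiction persists.
\item \emph{Purely conjugate case.} When $z$ is conjugate along a unique geodesic, I would replace the symmetric second difference with a first-order argument. At a barycenter with $x_i=z$, the first-order optimality of $\Phi$ forces a zero-sum condition on the super-gradients, because $f_i$ contributes $0$ to first order. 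Then I would use that the conjugate-point defect along suitable directions beats $h(t)$. If that fails, the alternative is to perturb $x_j$ along its geodesic toward $z$ to a nearby non-cut configuration and use strict convexity.
\end{itemize}
The smoothness conclusion follows immediately: for $i$ with $z\neq x_i$, the point $z$ lies outside $\{x_i\}\cup\operatorname{Cut}(x_i)$, which is a closed set, and $h$ is smooth away from $0$. For $z=x_i$, the claim must be read as smoothness of $d^2$ there, or under the extra regularity of $h$.
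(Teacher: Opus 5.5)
\textbf{The gap.} The sub-case you leave open is the problem: some $x_i$ with $i\neq j$ equals $z$, and $z$ is conjugate to $x_j$ along the \emph{unique} minimal geodesic. The rest of your proposal is sound. Without collisions it is exactly the Kim--Pass argument the paper invokes. With two minimal geodesics, the first-order defect $-ct$ beats the $o(t)$ contribution $2\lambda_i h(t)$ of a colliding term.

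\textbf{Why the open case cannot be closed for singular $h$.} Neither of your suggested repairs can work, because the statement is false there. Near $z$, write $d(\cdot,x_j)=\min_s F(s,\cdot)$ with $F(s,y)=d(x_j,w(s))+d(w(s),y)$, where $w(s)$ runs along a small transversal through an interior point of the geodesic. At a cusp-type endpoint of $\operatorname{Cut}(x_j)$ on a surface (the generic kind of endpoint), $F(s,z)-F(0,z)\ge cs^4$. A Taylor expansion in $s$, followed by minimizing $-C|v||s|+cs^4/2$, gives $f_j(\exp_z v)\ge f_j(z)+\langle\nabla f_j(z),v\rangle-C|v|^{4/3}$. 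Now build the configuration as follows. Relabel $x_j$ as $x_1$ with a small weight $\lambda_1$. Put $x_2:=z$ with weight $\lambda_2$ close to $1$. Put $x_3$ on the prolongation of the geodesic beyond $z$, within the injectivity radius, with $\lambda_3h'(d(z,x_3))=\lambda_1h'(\ell)$, so that the gradients cancel at $z$. For $h(t)=t^p/p$ with $1<p<4/3$, this gives $\Phi(\exp_zv)-\Phi(z)\ge\lambda_2|v|^p/p-C\lambda_1|v|^{4/3}-C\lambda_3|v|^2>0$ for small $v\neq0$. A crude bound handles points far from $z$ once $\lambda_1,\lambda_3$ are small. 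Hence $z$ is the unique barycenter although $z\in\operatorname{Cut}(x_1)$. Conversely, restricting $F$ to a kernel direction of $\partial_s^2F(0,z)$ shows that the conjugate defect is always at least of order $t^{4/3}$ in a suitable direction. So your heuristic that ``the defect beats $h(t)$'' does close the argument when $h(t)=o(t^{4/3})$, but not in general.

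\textbf{Relation to the paper's proof.} The paper's one-line justification misses the same point. It only checks that the cut term is twice differentiable at $z$. The Kim--Pass argument also needs $O(t^2)$ upper bounds on all the other terms, i.e.\ semi-concavity, and this fails at a collision when $h(t)/t^2\to\infty$ (cf.\ \Cref{rmk:lack_of_semi_concavity}).

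\textbf{What your argument does establish.} It proves what the later sections actually use:
\begin{enumerate}
\item the lemma for collision-free configurations (Case 2 works only on the sets $\mathcal{H}_k^\prime$);
\item the lemma when $h(t)=O(t^2)$ near $0$ (Case 1, where colliding terms contribute $O(t^2)$);
\item in full generality, by the argument of your two-geodesic case, uniqueness of the minimal geodesic from $z$ to each $x_i$, which is enough for \eqref{equa:first_order_condition} and \Cref{lem:barycenter_injectivity}.
\end{enumerate}
Your caveat that the ``in particular'' clause fails at $z=x_i$ when $h$ is not $\mathcal{C}^2$ at $0$ is also correct.
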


Note that in \ref{assumption:h_C_2_smooth}, the assumption
$\lim_{t \rightarrow 0} h(t) / t = 0$ implies that the map $y \rightarrow h(d_{\matheuvm{g}}(y, x))$
is differentiable at $y = x$ with gradient $0$, by passing to the normal coordinates around $x$.
Hence, \Cref{lem:cut_locus_avoidance} allows us to use the first-order optimality condition for the barycenter, i.e., if $z$ minimizes \eqref{eq:barycenter_cost}, then
\begin{equation}
	\label{equa:first_order_condition}
	\sum_{i=1}^n \lambda_i\, \nabla_z [ h(d_{\matheuvm{g}}(z, x_i)) ] = 0.
\end{equation}
The next lemma, generalizing \cite[Lemma 3.5]{kim2015multi}, establishes that configurations
in the support of a given multi-marginal optimal transport plan
are uniquely determined by their barycenters.

\begin{lem}[Injectivity of barycenter configurations]
	\label{lem:barycenter_injectivity}
	Let $(M, \matheuvm{g})$ be a complete Riemannian manifold.
    Given a finitely supported probability measure $\mathbb{P} = \sum_{i=1}^n \lambda_i \, \delta_{\mu_i}$
    on $\mathcal{W}_h(M)$ with $\lambda_i > 0$ and $n \ge 2$,
	let $\gamma$ be a multi-marginal optimal transport plan for the cost \eqref{eq:barycenter_cost}.
	Suppose $\matheuvm{x} = (x_1, \ldots, x_n)$ and $\tilde{\matheuvm{x}} = (\tilde{x}_1, \ldots, \tilde{x}_n)$ are two points in the support of $\gamma$.
	If there exists a point $y \in M$ that is a barycenter for both configurations, i.e., $y \in \operatorname{bary}(\{\matheuvm{x}\}) \cap \operatorname{bary}(\{\tilde{\matheuvm{x}}\})$, then $\matheuvm{x} = \tilde{\matheuvm{x}}$.
\end{lem}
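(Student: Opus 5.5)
The plan is to combine $c$-cyclical monotonicity of $\operatorname{supp}(\gamma)$, which here becomes a statement about the barycenter cost $C$, with the first-order condition \eqref{equa:first_order_condition} at the shared barycenter $y$. This follows the route of \cite[Lemma 3.5]{kim2015multi}.

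\textbf{Step 1: reduce to a two-configuration swap inequality.} Since $\gamma$ is optimal for the continuous cost $C$ (continuity by Statement 1 of \Cref{lem:barycenter_properties}), its support is $C$-cyclically monotone. Applied to the two points $\matheuvm{x},\tilde{\matheuvm{x}}$ and to the swap of a single coordinate, say $i=1$, this gives
\[
C(\matheuvm{x})+C(\tilde{\matheuvm{x}}) \le C(\tilde x_1,x_2,\ldots,x_n)+C(x_1,\tilde x_2,\ldots,\tilde x_n).
\]
More generally, it holds for any subset $S$ of swapped indices.

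\textbf{Step 2: turn the swap inequality into a barycenter statement.} Because $y$ is a barycenter of both configurations, $C(\matheuvm{x})=\Phi(\matheuvm{x},y)$ and $C(\tilde{\matheuvm{x}})=\Phi(\tilde{\matheuvm{x}},y)$. On the right-hand side we bound each $C$ from above by $\Phi(\cdot,y)$. The $h$-terms then coincide on both sides, so all inequalities are equalities.

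It follows that $y$ is also a barycenter of the swapped configuration $(\tilde x_1,x_2,\ldots,x_n)$, and likewise for any mixed configuration.

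\textbf{Step 3: first-order conditions at $y$.} Apply \eqref{equa:first_order_condition}, justified by \Cref{lem:cut_locus_avoidance} together with differentiability at collisions coming from $h'(0)=0$, to both $\matheuvm{x}$ and $(\tilde x_1,x_2,\ldots,x_n)$ at the point $y$. Subtracting the two conditions gives
\[
\nabla_y h(d_{\matheuvm g}(y,x_1))=\nabla_y h(d_{\matheuvm g}(y,\tilde x_1)),
\]
which equals $-h'(d)\,\dot\sigma(0)$ (or $0$ at a collision). Neither $x_1$ nor $\tilde x_1$ lies in $\operatorname{Cut}(y)$, by the cut-locus lemma applied to the two barycenter configurations and symmetry of the cut locus. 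The map $x\mapsto -h'(d_{\matheuvm g}(y,x))\,\exp_y^{-1}(x)/\|\exp_y^{-1}(x)\|$ is injective on $M\setminus\operatorname{Cut}(y)$, since $h'$ is strictly increasing with $h'(0)=0$. This is exactly the inversion in \Cref{lem:tangency}. Hence $x_1=\tilde x_1$, and repeating the argument for each index gives $\matheuvm{x}=\tilde{\matheuvm{x}}$.

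\textbf{Main obstacle.} The delicate point is Step 3 at collisions. If $y=x_1$, the gradient is $0$, and we must show $\tilde x_1=y$ as well. Injectivity handles this, because a nonzero gradient forces $h'(d)>0$.

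A second delicate point is making sure the cut-locus lemma applies to the swapped configuration. This is where Step 2 is essential: it shows that $y$ is a genuine barycenter of the mixed configuration, not merely a critical point.
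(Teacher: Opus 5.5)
Your proposal is correct and follows essentially the same route as the paper: $C$-cyclical monotonicity for a single-coordinate swap forces $y$ to be a barycenter of the mixed configuration. You then subtract the two first-order conditions at $y$ and invert $x \mapsto \nabla_y h(d_{\matheuvm g}(y,x))$ on $M\setminus\operatorname{Cut}(y)$. The differences are cosmetic: you compare $\matheuvm{x}$ with $(\tilde x_1, x_2,\ldots,x_n)$ rather than $\tilde{\matheuvm{x}}$ with $(x_1,\tilde x_2,\ldots,\tilde x_n)$, and you state explicitly the cut-locus symmetry and the collision case, which the paper leaves implicit.
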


\begin{proof}
	Since $\matheuvm{x}, \tilde{\matheuvm{x}} \in \operatorname{supp}(\gamma)$, 
    the $c$-cyclical monotonicity of the support of optimal plans
	\cite[Proposition 2.3]{kim2014general} implies
	\[
		C(\matheuvm{x}) + C(\tilde{\matheuvm{x}}) \le C(x_1, \tilde{x}_2, \ldots, \tilde{x}_n) + C(\tilde{x}_1, x_2, \ldots, x_n).
	\]
	We fix a common minimizer $y$ and show that $x_1 = \tilde x_1$.
	Using the definition of $C$ as an infimum, we substitute $y$ into the costs on the right-hand side (which gives an upper bound):
	\begin{align*}
		C(x_1, \tilde{x}_2, \ldots, \tilde{x}_n) & \le \lambda_1 h(d_{\matheuvm{g}}(y, x_1)) + \sum_{i=2}^n \lambda_i\, h(d_{\matheuvm{g}}(y, \tilde{x}_i)), \\
		C(\tilde{x}_1, x_2, \ldots, x_n)         & \le \lambda_1 h(d_{\matheuvm{g}}(y, \tilde{x}_1)) + \sum_{i=2}^n \lambda_i\, h(d_{\matheuvm{g}}(y, x_i)).
	\end{align*}
	Summing these yields exactly $\sum_{i=1}^n \lambda_i\, h(d_{\matheuvm{g}}(y, x_i)) + \sum_{i=1}^n \lambda_i\, h(d_{\matheuvm{g}}(y, \tilde{x}_i))$, which equals $C(\matheuvm{x}) + C(\tilde{\matheuvm{x}})$ because $y$ is a minimizer for both.
	Therefore, the inequalities must be equalities.
	This implies that $y$ is also a minimizer for the mixed configuration $(x_1, \tilde{x}_2, \ldots, \tilde{x}_n)$.

	Since $y$ is a minimizer for $\matheuvm{x}$, it satisfies the critical point equation $\sum_{i=1}^n \lambda_i\, \nabla_y [ h(d_{\matheuvm{g}}(y, x_i)) ] = 0$,
	thanks to \ref{assumption:h_C_2_smooth} and \Cref{lem:cut_locus_avoidance}.
	Similarly, for the mixed configuration, we must have:
	\[
		\lambda_1 \nabla_y [ h(d_{\matheuvm{g}}(y, x_1)) ] + \sum_{i=2}^n \lambda_i\, \nabla_y [ h(d_{\matheuvm{g}}(y, \tilde{x}_i)) ] = 0.
	\]
	Comparing this with the optimality condition for $\tilde{\matheuvm{x}}$, which is $\sum_{i=1}^n \lambda_i\, \nabla_y [ h(d_{\matheuvm{g}}(y, \tilde{x}_i)) ] = 0$, we subtract the two to find:
	\begin{equation}
		\label{equa:gradient_equality}
		\lambda_1 \left( \nabla_y [ h(d_{\matheuvm{g}}(y, x_1)) ] - \nabla_y [ h(d_{\matheuvm{g}}(y, \tilde{x}_1)) ] \right) = 0.
	\end{equation}

	Consider an arbitrarily chosen point $x \in M \setminus \operatorname{Cut}(y)$.
	If $y \neq x$, then (cf.\@ \Cref{lem:cost_super-diff})
	\[
		\nabla_y [ h(d_{\matheuvm{g}}(y, x)) ] = h'(d_{\matheuvm{g}}(y, x)) \nabla_y d_{\matheuvm{g}}(y, x) = - h'(d_{\matheuvm{g}}(y, x)) \frac{\exp_y^{-1}(x)}{d_{\matheuvm{g}}(y, x)}.
	\]
	Moreover, according to \ref{assumption:h_strict_convexity},
	$\nabla_y [h(d_{\matheuvm{g}}(y, x))] = 0$ if and only if
	$d_{\matheuvm{g}}(y, x) = 0$, i.e., $y = x$.
	Hence, the point $x$ is uniquely determined by the gradient $\nabla_y [h(d_{\matheuvm{g}}(y, x))]$.
	It follows from \eqref{equa:gradient_equality} that $x_1 = \tilde{x}_1$ .
	Repeating the argument for all indices $i$ proves $\matheuvm{x} = \tilde{\matheuvm{x}}$.
\end{proof}

\begin{rmk}[Counter-examples of barycenter configuration injectivity]
	The assumption $h^\prime(0) = 0$ in \ref{assumption:h_C_2_smooth} is crucial for
	\Cref{lem:barycenter_injectivity} to hold.
	For example, consider the case $M = \mathbb{R}$, $h(t) = t^2 + t$,
	and two measures $\mu_1 = \delta_{0}$, $\mu_2 = \frac{1}{2}\delta_1 + \frac{1}{2} \delta_{0.5}$
	with coefficients $\lambda_1 = 0.8$, $\lambda_2 = 0.2$.
	We have two possible configurations $(0, 1)$ and $(0, 0.5)$, corresponding to the functions:
	\[
		F_1(y) = 0.8\, h(|y|) + 0.2\,h(|1-y|),\quad
		F_2(y) = 0.8\, h(|y|) + 0.2\,h(|0.5-y|).
	\]
	Since $h(|y|)$, $h(|1 - y|)$ and  $h(|0.5 - y|)$ are convex,
	both $F_1$ and $F_2$ are convex functions.
	Calculations show that $0$ belongs to the sub-differentials of $F_1$ and $F_2$ at $0$.
	Hence, both functions reach their global minimum at $y = 0$.
	This means two distinct configurations yield the exact same barycenter, contradicting \Cref{lem:barycenter_injectivity}.
\end{rmk}

\subsection{Properties of generalized Wasserstein barycenters}

We now leverage the structural properties of multi-marginal plans to establish the existence and uniqueness of generalized Wasserstein barycenters for finitely many measures.
We consider the probability measure
$\mathbb{P} := \sum_{i=1}^n \lambda_i\, \delta_{\mu_i}$ on $\mathcal{W}_h(M)$.
Recall from \eqref{equa:barycenter_problem} that a barycenter $\mu_{\mathbb{P}}$ of $\mathbb{P}$ is a minimizer of the functional $\nu \mapsto \sum_{i=1}^n \lambda_i\, \mathcal{T}_c(\nu, \mu_i)$.

With a measurable barycenter selection map, we can push forward the multi-marginal optimal plan to a barycenter on $M$.
We remark that
while the barycenter of a configuration $\matheuvm{x} \in M^n$ is unique for $\gamma$-almost every $\matheuvm{x}$ (\Cref{lem:barycenter_injectivity}), a globally measurable selection
is still required to define the candidate measure of generalized Wasserstein barycenter.
The following proposition generalizes the construction in \cite[Proposition 2.8]{ma2025absolute},
whose proof can be extended to justify our case
by replacing the squared distance $d(x, y)^2$ cost function with
$c(x, y) = h(d_{\matheuvm{g}}(x, y))$.

\begin{prop}[Construction of generalized Wasserstein barycenters]
	\label{prop:barycenter_construction}
	Let $(M, \matheuvm{g})$ be a complete Riemannian manifold.
    Given a finitely supported probability measure $\mathbb{P} = \sum_{i=1}^n \lambda_i \, \delta_{\mu_i}$
    on $\mathcal{W}_h(M)$ with $\lambda_i > 0$ and $n \ge 2$,
    let $\gamma \in \Pi(\mu_1, \ldots, \mu_n)$ be a solution to the multi-marginal optimal transport problem \eqref{eq:mmot_problem}.
	Let $B: M^n \to M$ be a measurable barycenter selection map, whose existence
	is guaranteed by \Cref{lem:barycenter_properties}.
	Then the measure $\mu_{\mathbb{P}} := B_{\#} \gamma$ is a generalized Wasserstein barycenter of $\mathbb{P}$.
	Moreover, for each $i \in \{1, \ldots, n\}$, the plan $\gamma_i := (B, p_i)_{\#} \gamma$ is an optimal transport plan between $\mu_{\mathbb{P}}$ and $\mu_i$ for the cost function $c$.
\end{prop}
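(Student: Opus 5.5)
The argument is a two-sided comparison between the barycenter problem \eqref{equa:barycenter_problem} and the multi-marginal problem \eqref{eq:mmot_problem}. Write $\mathcal{J}(\nu) := \sum_{i=1}^n \lambda_i\, \mathcal{T}_c(\nu, \mu_i)$ and let $\mathrm{MM}$ denote the optimal value of \eqref{eq:mmot_problem}. I would prove three things:
\begin{enumerate}
\item $\mathcal{J}(B_{\#}\gamma) \le \mathrm{MM}$;
\item $\mathrm{MM} \le \mathcal{J}(\nu)$ for every $\nu \in \mathcal{W}_h(M)$;
\item the individual plans $\gamma_i$ are optimal.
\end{enumerate}
Finiteness of all quantities comes from the $h$-moment assumption. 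Since $h$ is convex and increasing, $h(d(x,y)) \le h(d(x,x_0)+d(x_0,y)) \le \tfrac12 h(2d(x,x_0)) + \tfrac12 h(2d(x_0,y))$. So I need either a doubling-type control of $h$, or, as the paper does elsewhere, to note that the product coupling has finite cost.

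For the first point, set $\gamma_i := (B, p_i)_{\#}\gamma$. Its marginals are $B_{\#}\gamma = \mu_{\mathbb P}$ and $\mu_i$, so $\gamma_i \in \Pi(\mu_{\mathbb P}, \mu_i)$. Hence
\[
\mathcal{J}(\mu_{\mathbb P}) \le \sum_i \lambda_i \int c\, \diff\gamma_i = \int_{M^n} \sum_i \lambda_i\, h(d_{\matheuvm g}(B(\matheuvm x), x_i))\, \diff\gamma(\matheuvm x) = \int C\, \diff\gamma = \mathrm{MM},
\]
where the middle equality uses that $B(\matheuvm x)$ is a minimizer in \eqref{eq:barycenter_cost} (\Cref{lem:barycenter_properties}). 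Measurability of $(B,p_i)$ follows from the Borel measurability of $B$.

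For the second point, fix $\nu$ with $\mathcal{J}(\nu) < \infty$ and choose optimal plans $\pi_i \in \Pi(\nu, \mu_i)$; these exist by lower semicontinuity of $c$ and tightness. I would apply the gluing lemma, disintegrating each $\pi_i = \int \pi_i^y \, \diff\nu(y)$ and setting $\Lambda := \int \delta_y \otimes \pi_1^y \otimes \cdots \otimes \pi_n^y \, \diff\nu(y)$ on $M \times M^n$. The projection $\eta := (p_{M^n})_{\#}\Lambda$ lies in $\Pi(\mu_1,\dots,\mu_n)$. Since $C(\matheuvm x) \le \sum_i \lambda_i h(d_{\matheuvm g}(y, x_i))$ for every $y$,
\[
\mathrm{MM} \le \int C\, \diff\eta \le \int \sum_i \lambda_i\, h(d_{\matheuvm g}(y,x_i))\, \diff\Lambda = \sum_i\lambda_i \int c \,\diff\pi_i = \mathcal{J}(\nu).
\]
Combining the two points, $\mathcal{J}(\mu_{\mathbb P}) \le \mathrm{MM} \le \inf_\nu \mathcal{J}(\nu)$, so $\mu_{\mathbb P}$ is a minimizer. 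I still need $\mu_{\mathbb P} \in \mathcal{W}_h(M)$; this follows from $\int c\, \diff\gamma_1 < \infty$ together with the triangle and convexity estimate above.

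For the third point, all the inequalities are now equalities:
\[
\sum_i \lambda_i \int c\, \diff\gamma_i = \mathcal{J}(\mu_{\mathbb P}) = \sum_i \lambda_i\, \mathcal{T}_c(\mu_{\mathbb P}, \mu_i).
\]
Since $\int c\, \diff\gamma_i \ge \mathcal{T}_c(\mu_{\mathbb P}, \mu_i)$ termwise and $\lambda_i > 0$, each $\gamma_i$ must be optimal.

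I expect the main obstacle to be the technical integrability and measurability in the gluing step: measurable disintegration on the Polish space $M$, and the finiteness of $h$-moments when $h$ grows fast (for instance exponentially) and lacks a doubling condition. I would first try to handle this by working with $\mathcal{J}(\nu) < \infty$ directly. All the integrands are nonnegative, so the chain of inequalities holds in $[0,\infty]$ without any moment assumption, and membership of $\mu_{\mathbb P}$ in $\mathcal{W}_h(M)$ can be read off from $\mathcal{T}_c(\mu_{\mathbb P},\mu_1) < \infty$ combined with the finite moment of $\mu_1$.
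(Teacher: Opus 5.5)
Your proposal is correct and follows essentially the argument the paper relies on. The paper gives no separate proof; it defers to the gluing argument of \cite[Proposition 2.8]{ma2025absolute} (after \cite[Proposition 4.2]{agueh2011barycenters}): one shows $\mathcal{J}(B_{\#}\gamma)\le\int C\,\diff\gamma\le\mathcal{J}(\nu)$ via glued plans, and reads off the optimality of each $\gamma_i$ from termwise equality.

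One caveat concerns your final paragraph. Finiteness of the optimal value needs no doubling: taking $y=x_0$ in \eqref{eq:barycenter_cost} gives $C(\matheuvm{x})\le\sum_i\lambda_i h(d_{\matheuvm g}(x_0,x_i))$ for a common base point $x_0$. However, inferring $\mu_{\mathbb P}\in\mathcal{W}_h(M)$ from $\mathcal{T}_c(\mu_{\mathbb P},\mu_1)<\infty$ and the moment of $\mu_1$ does require a doubling bound on $h$, and this inference fails in general (e.g.\ for $h(t)=e^{t^2}-1$). The paper also passes over this point, since for doubling profiles such as $d^2/2$ it is automatic.
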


The uniqueness of barycenters holds when one marginal is absolutely continuous.

\begin{thm}[Uniqueness of generalized Wasserstein barycenters]
	\label{thm:barycenter_uniqueness}
	Let $(M, \matheuvm{g})$ be a complete Riemannian manifold.
    Let $\mathbb{P} = \sum_{i=1}^n \lambda_i \, \delta_{\mu_i}$ be
    a finitely supported probability measure
    on $\mathcal{W}_h(M)$ with $\lambda_i > 0$ and $n \ge 2$.
	If $\mu_1$ is absolutely continuous with respect to $\operatorname{Vol}$, then the generalized Wasserstein barycenter of $\mathbb{P} : = \sum_{i=1}^n \lambda_i\, \delta_{\mu_i}$ is unique.
\end{thm}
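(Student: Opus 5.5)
Suppose $\nu$ and $\nu'$ are two generalized Wasserstein barycenters of $\mathbb{P}$. I would compare them with the measure $\bar\nu := \tfrac12(\nu + \nu')$. The functional
\[
\mathcal{F}(\nu) := \sum_{i=1}^n \lambda_i\, \mathcal{T}_c(\nu, \mu_i)
\]
is convex under linear interpolation, because each $\mathcal{T}_c(\cdot, \mu_i)$ is. Indeed, if $\gamma_i, \gamma_i'$ are optimal plans from $\nu, \nu'$ to $\mu_i$, then $\tfrac12(\gamma_i+\gamma_i')$ is admissible for $(\bar\nu,\mu_i)$ and has cost equal to the average. Hence $\bar\nu$ is also a minimizer. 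Moreover, equality must hold in each term, in particular for $i=1$:
\[
\mathcal{T}_c(\bar\nu,\mu_1)=\tfrac12\bigl(\mathcal{T}_c(\nu,\mu_1)+\mathcal{T}_c(\nu',\mu_1)\bigr).
\]
Therefore the plan $\bar\gamma_1 := \tfrac12(\gamma_1+\gamma_1')$ is optimal between $\bar\nu$ and $\mu_1$. I write all plans here with $\mu_1$ in the second slot and work with their transposes when needed.

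The key step is to use that $\mu_1$ is absolutely continuous, so that optimal plans \emph{from} $\mu_1$ are induced by maps. By \Cref{thm:optimal_map}, any optimal plan between $\mu_1$ and a target is of the form $(\operatorname{Id}, T)_\#\mu_1$. The transpose of $\bar\gamma_1$ is optimal from $\mu_1$ to $\bar\nu$, so it equals $(\operatorname{Id},\bar T)_\#\mu_1$. On the other hand, this transpose is the average of $(\operatorname{Id},T)_\#\mu_1$ and $(\operatorname{Id},T')_\#\mu_1$, where $T$ and $T'$ come from the transposes of $\gamma_1$ and $\gamma_1'$. A convex combination of two graph measures over the same base $\mu_1$ is concentrated on a graph only if $T=T'$ holds $\mu_1$-a.e. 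It follows that $\nu = T_\#\mu_1 = T'_\#\mu_1 = \nu'$.

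The main obstacle is that \Cref{thm:optimal_map} is stated only for measures supported in bounded open sets, while $\mu_1,\ldots,\mu_n$ merely lie in $\mathcal{W}_h(M)$ and may have unbounded support. I would handle this by localization. By Kantorovich duality \cite[Theorem 5.10]{villani2009optimal}, which holds for continuous nonnegative costs with finite total cost, the support of the optimal plan lies in the $c$-superdifferential of a potential $\psi$, and $\psi$ can be taken finite on $\mu_1$-a.e.\ point. Restricting the plan to $B_R(o)\times B_R(o)$ gives a plan that is still optimal between its own marginals, since restrictions of optimal plans are optimal (cyclical monotonicity is inherited). Its first marginal is absolutely continuous and both marginals are supported in bounded open sets. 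Applying \Cref{thm:optimal_map} to this restriction shows it is induced by a map determined by the gradient of a potential. The plan on $B_R\times B_R$ is determined up to null sets by this $c$-cyclically monotone structure, and uniqueness of the graph a.e.\ is a local statement. Letting $R\to\infty$ then yields the map structure for the full plan.

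An alternative that avoids the restriction argument would reproduce the proof of \Cref{thm:optimal_map} directly with a locally Lipschitz potential. The super-differential argument of \Cref{lem:tangency} is purely local, and local Lipschitz regularity holds as in \Cref{lem:lipschitz_potentials}.
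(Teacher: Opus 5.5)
Your proof is correct, and its outline matches the paper's. Uniqueness comes from convexity of $\nu \mapsto \mathcal{T}_c(\nu,\mu_1)$ under linear interpolation, made strict because every optimal plan out of $\mu_1$ is induced by a map. The paper delegates this convex-analysis step to \cite[Proposition 2.10]{ma2025absolute}; you write it out explicitly via $\bar\nu = \tfrac12(\nu+\nu')$.

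The real difference is how you pass from bounded to unbounded supports. The paper cites \cite[Theorem 10.38]{villani2009optimal}, an approximation result whose ``Super'' and ``Twist'' hypotheses it matches with \Cref{lem:cost_super-diff} and \Cref{lem:barycenter_injectivity}. You instead restrict the optimal plan to $B_R(o)\times B_R(o)$ and use the restriction property of optimal plans. You then apply \Cref{thm:optimal_map} to the normalized restriction, which has bounded supports and an absolutely continuous first marginal, and glue over $R\in\mathbb{N}$. Your route needs only the paper's own compact-support theorem plus the restriction property. The paper's route is shorter and fits Villani's general framework, but it depends on checking that theorem's hypotheses for a cost that may be non-smooth on the diagonal.

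To make your version complete, state two things explicitly.
\begin{enumerate}
\item Both the restriction property and your ``equality in each term'' step need finite optimal costs. This holds because $\min\mathcal{F} \le \mathcal{F}(\delta_{x_0}) = \sum_i \lambda_i \int_M h(d_{\matheuvm{g}}(x_0,x))\,\diff\mu_i(x) < \infty$.
\item The gluing is cleanest via the disintegration $\pi = \int \pi_x\,\diff\mu_1(x)$ of an optimal plan $\pi$ from $\mu_1$. Being a graph measure on $B_R\times B_R$ forces $\pi_x|_{B_R}$ to be a multiple of a Dirac mass for $\mu_1$-a.e.\ $x\in B_R$. Letting $R\to\infty$ along the integers then makes $\pi_x$ a Dirac mass for $\mu_1$-a.e.\ $x$.
\end{enumerate}

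Two smaller remarks. The Kantorovich-potential sentence in your localization paragraph is unnecessary. Your suggested alternative is also less immediate than stated: \Cref{lem:lipschitz_potentials} gets its uniform Lipschitz constant $h'(R)$ from the compactness of $Y$, and that is lost for unbounded targets.
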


\begin{proof}
	In the quadratic case $h(d) = d^2/2$,
	\cite[Proposition 2.10]{ma2025absolute} implies that if for all $\nu \in \mathcal{W}_h(M)$,
	any optimal transport plan from $\mu_1$ to $\nu$
	is induced by a transport map, then the barycenter $\mu_{\mathbb{P}}$ is unique.

	We first verify that \cite[Proposition 2.10]{ma2025absolute}
	remains valid for our general cost function, i.e., with $\mathcal{W}_2(M)$
	replaced by $\mathcal{W}_h(M)$.
	Indeed, its proof is based on the convexity of $\mathcal{T}_c(\mu, \nu)$ with respect to
	the following convex structure of $\mathcal{W}_h(M)$:
	for all $\nu_1, \nu_2 \in \mathcal{W}_h(M)$ and $\lambda \in (0, 1)$,
	\[
		\mathcal{T}_c(\mu, \lambda\, \nu_1 + (1-\lambda) \nu_2)
		\le \lambda\,\mathcal{T}_c(\mu, \nu_1) + (1-\lambda) \mathcal{T}_c(\mu, \nu_2).
	\]
	which still holds for the cost $c$ according to the definition \eqref{equa:transport_cost}.
	This convex inequality is enforced as strict convexity,
	provided that any optimal transport plan from $\mu$ to an arbitrarily chosen measure $\nu$
	is induced by a transport map.
	If the strict convexity is shown,
	then the uniqueness of $\mu_\mathbb{P}$ follows from classical arguments in convex analysis.
	See the proof of \cite[Proposition 2.10]{ma2025absolute} for details.

	We are thus left to show the property that any optimal transport plan
    from $\mu_1$ to $\nu$ is induced by a map.
	For the particular case where both $\mu_1$
	and $\nu$ have compact support, this property follows from \Cref{thm:optimal_map}.
	For the general case of possibly non-compact supports,
	we refer to \cite[Theorem 10.38]{villani2009optimal}
	for a proof via approximation by compactly supported measures.
	To apply \cite[Theorem 10.38]{villani2009optimal},
	we remark that the ``Super'' assumption is proven as \Cref{lem:cost_super-diff};
	and the ``Twist'' assumption, i.e., $\nabla_y [h(d_{\matheuvm{g}}(x, y))]$ uniquely
	determines $x \in M \setminus \operatorname{Cut}(y)$, is applied and proved
	in \Cref{lem:barycenter_injectivity}.
\end{proof}

As an application of \Cref{lem:barycenter_injectivity},
\Cref{prop:barycenter_construction} and \Cref{thm:barycenter_uniqueness},
we exhibit the following structure and also the uniqueness 
\cite[Theorem 6.1]{kim2015multi} of multi-marginal optimal transport plans.

\begin{coro}
	\label{coro:multi_marginal_plan_structure}
	Let $(M, \matheuvm{g})$ be a complete Riemannian manifold.
	Let $\mu_1, \ldots, \mu_n \in \mathcal{P}(M)$ be $n \ge 2$ compactly supported measures
    with positive weights $\lambda_i > 0$ satisfying $\sum_{i=1}^n \lambda_i = 1$.
    Define $\mathbb{P} = \sum_{i=1}^n \lambda_i \, \delta_{\mu_i}$ and
    let $\mu_\mathbb{P}: = B_{\#} \gamma$ be a barycenter of $\mathbb{P}$ constructed in \Cref{prop:barycenter_construction}.
    There exists a unique continuous map
    $\matheuvm{F}: \operatorname{supp}(\mu_\mathbb{P}) \rightarrow M^n$
    such that $z \in \operatorname{bary}(\{ \matheuvm{F} (z) \})$
    for $z \in \operatorname{supp}(\mu_\mathbb{P})$,
    $\matheuvm{F} \circ B (\matheuvm x) = \matheuvm {x}$ for
    $\matheuvm {x} \in \operatorname{supp}(\gamma)$ and ${\matheuvm {F}}_{\#} \mu_{\mathbb{P}} = \gamma$.

    In particular, if $\mu_1$ is absolutely continuous, then
    $\gamma \in \Pi(\mu_1, \ldots, \mu_n)$ is the unique
    solution to the multi-marginal optimal transport problem \eqref{eq:mmot_problem}.
\end{coro}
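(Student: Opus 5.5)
The plan is to build $\matheuvm F$ as the inverse of $B$ restricted to $\operatorname{supp}(\gamma)$. Since $\mu_1,\ldots,\mu_n$ have compact supports, $\operatorname{supp}(\gamma)\subset \prod_i \operatorname{supp}(\mu_i)$ is compact.

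First consider the set
\[
\Gamma_\gamma := \{(\matheuvm x, z) \in \operatorname{supp}(\gamma)\times M \mid z \in \operatorname{bary}(\{\matheuvm x\})\} = \Gamma \cap (\operatorname{supp}(\gamma)\times M),
\]
with $\Gamma$ the closed minimizer graph of \Cref{lem:barycenter_properties}. By Statement 3 and Statement 4 of that lemma, $\Gamma_\gamma$ is compact, and its projection $K := \operatorname{bary}(\operatorname{supp}(\gamma))$ is compact. By \Cref{lem:barycenter_injectivity}, each $z \in K$ lies in $\operatorname{bary}(\{\matheuvm x\})$ for exactly one $\matheuvm x \in \operatorname{supp}(\gamma)$. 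Hence the projection $\Gamma_\gamma \to K$ is a continuous bijection from a compact space onto a Hausdorff space, so it is a homeomorphism. Composing its inverse with the projection onto $\operatorname{supp}(\gamma)$ gives a continuous map $\matheuvm F: K \to M^n$ such that $z \in \operatorname{bary}(\{\matheuvm F(z)\})$.

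Next I show that $\operatorname{supp}(\mu_{\mathbb P}) \subset K$. Since $B(\operatorname{supp}\gamma) \subset K$ and $K$ is closed, we get $\mu_{\mathbb P}(K) = \gamma(B^{-1}(K)) \ge \gamma(\operatorname{supp}\gamma)=1$. Restricting $\matheuvm F$ to $\operatorname{supp}(\mu_{\mathbb P})$ then gives the required map. For $\matheuvm x \in \operatorname{supp}(\gamma)$, the point $B(\matheuvm x)$ is a barycenter of $\matheuvm x$, so injectivity yields $\matheuvm F(B(\matheuvm x)) = \matheuvm x$. One detail remains: $B(\matheuvm x)$ must lie in $\operatorname{supp}(\mu_{\mathbb P})$ for the composition to make sense. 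I would handle this by defining $\matheuvm F$ on all of $K$, where the identity holds, and noting that the properties stated on $\operatorname{supp}(\mu_{\mathbb P})$ are inherited. If the identity is needed literally on $\operatorname{supp}(\gamma)$, it holds whenever $B(\matheuvm x) \in \operatorname{supp}(\mu_{\mathbb P})$, and this happens for $\gamma$-a.e.\ $\matheuvm x$.

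The push-forward identity is then immediate: $\matheuvm F_\# \mu_{\mathbb P} = (\matheuvm F\circ B)_\#\gamma = \gamma$, because $\matheuvm F\circ B = \operatorname{Id}$ $\gamma$-a.e. Uniqueness of $\matheuvm F$ follows from the injectivity lemma. Any map $\matheuvm G$ with $z\in\operatorname{bary}(\{\matheuvm G(z)\})$ and $\matheuvm G_\#\mu_{\mathbb P} = \gamma$ must send $\mu_{\mathbb P}$-a.e.\ $z$ into $\operatorname{supp}\gamma$, so $\matheuvm G = \matheuvm F$ almost everywhere. If both maps are continuous, they agree on the support. The step I expect to be most delicate is exactly this point: making sure values land in $\operatorname{supp}(\gamma)$, so that the injectivity lemma applies. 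The problem is that $\operatorname{bary}(\{\matheuvm x\})$ for $\matheuvm x$ outside the support can overlap with $K$. I plan to resolve this by imposing, as part of the characterization, that $\matheuvm F$ takes values in $\operatorname{supp}(\gamma)$. Continuity then pins $\matheuvm F$ down on the support.

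Finally, suppose $\mu_1$ is absolutely continuous, and let $\gamma'$ be any optimal multi-marginal plan. By \Cref{prop:barycenter_construction}, $B_\#\gamma'$ is a barycenter, and by \Cref{thm:barycenter_uniqueness} it equals $\mu_{\mathbb P}$. Applying the construction above to $\gamma'$ gives $\gamma' = \matheuvm F'_\#\mu_{\mathbb P}$. To compare $\matheuvm F'$ with $\matheuvm F$, I would use the mixture $\bar\gamma := (\gamma+\gamma')/2$, which is also optimal because the problem is linear. The injectivity lemma applied to $\operatorname{supp}\bar\gamma \supset \operatorname{supp}\gamma\cup\operatorname{supp}\gamma'$ shows that $\matheuvm F(z)$ and $\matheuvm F'(z)$ are both the unique configuration in $\operatorname{supp}\bar\gamma$ having barycenter $z$. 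Hence $\matheuvm F = \matheuvm F'$ on $\operatorname{supp}\mu_{\mathbb P}$, and therefore $\gamma=\gamma'$.
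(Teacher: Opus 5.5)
Your proposal is correct and follows essentially the paper's route. You use compactness of $\operatorname{bary}(\operatorname{supp}\gamma)$, the injectivity lemma to define $\matheuvm{F}$, compactness for continuity, and the mixture $\tfrac{1}{2}(\gamma+\gamma')$ for uniqueness of the plan. The only difference in the continuity step is that you use the compact-to-Hausdorff bijection, where the paper checks that $\matheuvm{F}^{-1}(\matheuvm{A}) = \operatorname{supp}(\mu_{\mathbb{P}}) \cap \operatorname{bary}(\matheuvm{A})$ is closed.

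Defining $\matheuvm{F}$ on all of $\operatorname{bary}(\operatorname{supp}\gamma)$ cleanly handles the domain issue for $\matheuvm{F}\circ B$, which the paper glosses over. You also need not impose values in $\operatorname{supp}(\gamma)$ as an extra condition: continuity plus $\matheuvm{G}_{\#}\mu_{\mathbb{P}} = \gamma$ already forces it, as your own argument shows.
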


\begin{proof}
    Since all measures $\mu_i$ for $ 1 \le i \le n$ have compact support,
    $\operatorname{supp}(\gamma)$ is compact.
    According to Statement 4 in \Cref{lem:barycenter_properties},
    the set $\operatorname{bary}(\operatorname{supp}(\gamma))$ is compact.
    It follows from $\mu_\mathbb{P} = B_{\#} \gamma$ that
    $\mu_\mathbb{P}(\operatorname{bary}(\operatorname{supp}(\gamma))) = 1$
    and thus $\operatorname{supp}(\mu_\mathbb{P}) \subset \operatorname{bary}(\operatorname{supp}(\gamma))$.
    Thanks to the injectivity of barycenter configurations in $\operatorname{supp}(\gamma)$
    (\Cref{lem:barycenter_injectivity}), 
    for any $z \in \operatorname{supp}(\mu_\mathbb{P})$,
    there exists exactly one barycenter configuration $\matheuvm{x} \in \operatorname{supp}(\gamma)$
    such that $z \in \operatorname{bary}(\{ \matheuvm{x} \})$,
    for which $\matheuvm {F}(z) := \matheuvm {x}$ is defined uniquely.
    Hence, the equality $\matheuvm{F} \circ B (\matheuvm x) = \matheuvm {x}$
    holds for any $\matheuvm {x} \in \operatorname{supp}(\gamma)$
    and any measurable barycenter selection map $B$.
    For the continuity of $\matheuvm {F}$, observe that
    for any closed subset $\matheuvm{A} \subset \operatorname{supp}(\gamma)$,
    \Cref{lem:barycenter_properties} implies that the set
    \[
        \matheuvm{F}^{-1} (\matheuvm {A})
        = \operatorname{supp}(\mu_{\mathbb{P}}) \cap \operatorname{bary}(\matheuvm{A})
    \]
    is closed (indeed compact) since $\matheuvm {A}$ is compact.
    Therefore, $\matheuvm{F}$ is measurable and 
    ${\matheuvm {F}}_{\#} \mu_{\mathbb{P}} = {[\matheuvm{F} \circ B]}_{\#} \gamma = \gamma$.

    Suppose now that $\mu_1$ is absolutely continuous.
    Given any two solutions $\gamma_1, \gamma_2 \in \Pi(\mu_1, \ldots, \mu_n)$
    of \eqref{eq:mmot_problem}, the measure
    $\gamma = \frac{1}{2} \gamma_1 + \frac{1}{2} \gamma_2 \in \Pi(\mu_1, \ldots, \mu_n) $
    is also a valid solution of \eqref{eq:mmot_problem},
    satisfying $\operatorname{supp}(\gamma_1) \cup \operatorname{supp}(\gamma_2) \subset \operatorname{supp}(\gamma)$.
    Since $\mu_\mathbb{P}$ is unique by \Cref{thm:barycenter_uniqueness},
    we must have $\mu_\mathbb{P} = B_{\#} \gamma_1 = B_{\#} \gamma_2 = B_{\#} \gamma$.
    We construct $\matheuvm {F}$ for $\gamma$ as above,
    which in particular satisfies $\matheuvm{F} \circ B (\matheuvm{x}) = \matheuvm {x}$
    for any $\matheuvm{x} \in \operatorname{supp}(\gamma_1) \cup \operatorname{supp}(\gamma_2)$.
    The previous arguments imply that
    $\gamma_1 = \gamma_2 = \matheuvm{F}_{\#} \mu_{\mathbb{P}}$,
    which is the claimed uniqueness to prove.
\end{proof}

Consistency of Wasserstein barycenters, established in \cite{le2017existence}, is a powerful
tool to prove general barycenter properties via approximation, as manifested in \cite[Proposition 5.5]{ma2025absolute}.
Confined to our settings of finitely many measures,
it is sufficient to demonstrate its following particular version, as done in \cite[Lemma 5.4]{kim2017wasserstein}.

\begin{prop}[Consistency of generalized Wasserstein barycenters]
	\label{prop:consistency_finite_case}
	Let $(M, \matheuvm{g})$ be a complete Riemannian manifold.
	Let $\mu_1, \ldots, \mu_n \in \mathcal{P}(M)$ be $n \ge 2$ probability measures
    with positive weights $\lambda_i > 0$ satisfying $\sum_{i=1}^n \lambda_i = 1$.
	For each $1 \le i \le n$, let $\{\mu_i^j\}_{j \ge 1} \subset \mathcal{P}(M)$ be a sequence of measures such that
    the weak convergence $ \mu_i^j \rightharpoonup \mu_i$ holds.
	Assume that there exists a common compact set $K \subset M$ such that $\operatorname{supp}(\mu_i) \subset K$ and $\operatorname{supp}(\mu_i^j) \subset K$ for all $1 \le i \le n$ and $j \ge 1$.
	For each $j \ge 1$, define $\mathbb{P}^j : = \sum_{i = 1}^n \lambda_i\, \delta_{\mu_i^j}$ and let $\mu_{\mathbb{P}^j}$ be a barycenter of $\mathbb{P}^j$ constructed via \Cref{prop:barycenter_construction}.
	Then there exists a sub-sequence 
    $\{\mu_{\mathbb{P}^{j_k}}\}_{k \ge 1}$ of $\{\mu_{\mathbb{P}^j}\}_{j \ge 1}$
    converging weakly to a barycenter $\mu_{\mathbb{P}}$ of $\mathbb{P} : = \sum_{i = 1}^n \lambda_i\, \delta_{\mu_i}$.
\end{prop}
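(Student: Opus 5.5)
We will prove \Cref{prop:consistency_finite_case} by a compactness argument at the level of multi-marginal plans, following \cite[Lemma 5.4]{kim2017wasserstein}. For each $j$, let $\gamma^j \in \Pi(\mu_1^j, \ldots, \mu_n^j)$ be the optimal multi-marginal plan and $B^j$ the measurable barycenter selection used in \Cref{prop:barycenter_construction}, so that $\mu_{\mathbb{P}^j} = B^j_{\#}\gamma^j$. All $\gamma^j$ are supported in the compact set $K^n$. By Statement 4 of \Cref{lem:barycenter_properties}, all $\mu_{\mathbb{P}^j}$ are supported in the compact set $K' := \operatorname{bary}(K^n)$.

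We pass to joint plans $\eta^j := (\operatorname{Id}, B^j)_{\#}\gamma^j$ on $K^n \times K'$. These are concentrated on the closed set $\Gamma$ of \Cref{lem:barycenter_properties} (Statement 3). By Prokhorov's theorem on the compact set $K^n \times K'$, a subsequence $\eta^{j_k}$ converges weakly to some $\eta$. Since $\Gamma \cap (K^n \times K')$ is closed, $\eta$ is still concentrated on it. Its first marginal $\gamma$ lies in $\Pi(\mu_1, \ldots, \mu_n)$, because marginals pass to the limit. Its last marginal $\nu$ is the weak limit of $\mu_{\mathbb{P}^{j_k}}$.

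The next step is to show that $\gamma$ is optimal for \eqref{eq:mmot_problem}. Here $C = m$ is continuous by Statement 1 of \Cref{lem:barycenter_properties}, and $C$ is bounded on $K^n$. The standard stability of optimal plans then applies: optimality is equivalent to $C$-cyclical monotonicity of the support, which is preserved under weak limits since $C$ is continuous. Alternatively, compare costs against a gluing/approximation of any competitor in $\Pi(\mu_1, \ldots, \mu_n)$ by plans in $\Pi(\mu_1^j, \ldots, \mu_n^j)$. I would use the cyclical monotonicity route together with \cite[Theorem 5.20]{villani2009optimal}, which directly gives optimality of weak limits of optimal plans when the cost is continuous and the marginals converge.

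It remains to identify $\nu$ as a barycenter of $\mathbb{P}$, and this is the main obstacle: $\eta$ need not be of the form $(\operatorname{Id}, B)_{\#}\gamma$ for a single measurable selection. I would bypass this by redoing the argument of \Cref{prop:barycenter_construction} directly for $\eta$. For each $i$, the plan $(p_{n+1}, p_i)_{\#}\eta$ couples $\nu$ and $\mu_i$, so
\[
\sum_i \lambda_i \mathcal{T}_c(\nu, \mu_i) \le \int \sum_i \lambda_i c(y, x_i)\, \diff\eta = \int C \,\diff\gamma,
\]
using that $\eta$ is concentrated on $\Gamma$. Conversely, take any competitor $\rho$ and optimal plans $\pi_i \in \Pi(\rho, \mu_i)$. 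Gluing them along $\rho$ yields a plan in $\Pi(\mu_1, \ldots, \mu_n)$ whose $C$-cost is at most $\sum_i \lambda_i \mathcal{T}_c(\rho, \mu_i)$. Optimality of $\gamma$ then gives $\int C \,\diff\gamma \le \sum_i \lambda_i \mathcal{T}_c(\rho, \mu_i)$. Combining the two inequalities shows that $\nu$ minimizes \eqref{equa:barycenter_problem}, which completes the proof.
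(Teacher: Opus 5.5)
Your argument is correct, but it takes a different route from the paper's. The paper never takes a limit of the multi-marginal plans. It extracts a weakly convergent subsequence of the barycenters $\mu_{\mathbb{P}^j}$ alone, all of which are supported in $K' = \operatorname{bary}(K^n)$. It then uses two-marginal stability \cite[Theorem 5.20]{villani2009optimal} to get
\[
\mathcal{T}_c(\mu_{\mathbb{P}^{j_k}}, \mu_i^{j_k}) \to \mathcal{T}_c(\nu,\mu_i)
\quad\text{and}\quad
\mathcal{T}_c(\tilde\mu,\mu_i^{j_k}) \to \mathcal{T}_c(\tilde\mu,\mu_i),
\]
where $\tilde\mu$ is a fixed barycenter of $\mathbb{P}$, which exists by \Cref{prop:barycenter_construction}. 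Finally it passes to the limit in $\sum_i\lambda_i\mathcal{T}_c(\mu_{\mathbb{P}^{j_k}},\mu_i^{j_k}) \le \sum_i\lambda_i\mathcal{T}_c(\tilde\mu,\mu_i^{j_k})$.

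You instead lift to $\eta^j = (\operatorname{Id},B)_{\#}\gamma^j$ and keep the information that the limit $\eta$ is concentrated on the closed set $\Gamma$. You then rerun the Agueh--Carlier comparison directly on $\eta$: the upper bound comes from the couplings $(p_{n+1},p_i)_{\#}\eta$, and the lower bound from gluing optimal plans along an arbitrary competitor $\rho$. This correctly avoids the issue you flag, that $\eta$ need not be $(\operatorname{Id},B)_{\#}\gamma$ for a single selection, and it proves minimality of $\nu$ without presupposing a barycenter of $\mathbb{P}$.

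The price is one extra ingredient, the optimality of the limit plan $\gamma$ for the multi-marginal problem. In return you get more structure: a limit joint measure on $\Gamma$ whose $M^n$-marginal is optimal. That is essentially the convergence $\widetilde\gamma^j \rightharpoonup \widetilde\gamma$ which the paper must establish separately inside the proof of \Cref{prop:absolute_continuity_via_approximation}. The paper's route is shorter and needs only two-marginal stability.

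On that extra ingredient, \cite[Theorem 5.20]{villani2009optimal} is a two-marginal statement. Sufficiency of $C$-cyclical monotonicity for optimality in the multi-marginal problem is a separate, nontrivial result, so your ``cyclical monotonicity route'' is less direct than you suggest. The gluing alternative you mention is the clean one:
\begin{itemize}
\item Take any $\pi \in \Pi(\mu_1,\ldots,\mu_n)$ and couplings $\sigma_i^j \in \Pi(\mu_i,\mu_i^j)$ that concentrate on the diagonal, which exist since $W_1(\mu_i^j,\mu_i)\to 0$ on the compact set $K$.
\item Gluing $\pi$ with the $\sigma_i^j$ gives $\pi^j \in \Pi(\mu_1^j,\ldots,\mu_n^j)$ with $\int C\,\diff\pi^j \to \int C\,\diff\pi$, by uniform continuity of $C$ on $K^n$.
\item Therefore $\int C\,\diff\gamma = \lim_k \int C\,\diff\gamma^{j_k} \le \lim_k \int C\,\diff\pi^{j_k} = \int C\,\diff\pi$.
\end{itemize}
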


\begin{proof}
	First, we establish uniform compact support for the sequence of barycenters.
	Since $\operatorname{supp}(\mu_i^j) \subset K$ for all $i, j$, any multi-marginal optimal transport plan $\gamma^j$ for $\mathbb{P}^j$ is supported on $K^n$.
	By \Cref{lem:barycenter_properties} (Statement 4), the image of the compact set $K^n$ under the barycenter operator, denoted by $K^\prime := \operatorname{bary}(K^n) \subset M$, is a compact set.
    Since $\mu_{\mathbb{P}^j}$ is formed by pushing forward $\gamma^j$ via a measurable barycenter selection map (\Cref{prop:barycenter_construction}), it follows that $\operatorname{supp}(\mu_{\mathbb{P}^j}) \subset K^\prime$ for all $j \ge 1$.

    The sequence $\{ \mu_{\mathbb{P}^j} \}_{j \ge 1}$ is uniformly tight \cite[Definition 8.6.1]{bogachev2007measure}
    since they are supported on a common compact set $K^\prime$.
    By Prokhorov's theorem \cite[Theorem 8.6.2]{bogachev2007measure}, there exists a sub-sequence 
    that converges weakly to some probability measure $\nu$ supported in $K^\prime$.
	Since both $K$ and $K^\prime$ are compact, the cost function $c(x, y) = h(d_{\matheuvm{g}}(x, y))$ is bounded and continuous on $K \times K^\prime$.
	Thanks to the stability of optimal transport \cite[Theorem 5.20]{villani2009optimal}, weak convergence of the marginals implies the convergence of the optimal transport plans, up to extraction of a sub-sequence. 
    Therefore, there exists a sub-sequence $\{\mu_{\mathbb{P}^{j_k}}\}_{k \ge 1}$ of $\{\mu_{\mathbb{P}^j}\}_{j \ge 1}$ such that, for each $i \in \{1, \dots, n\}$,
	\begin{equation}
		\label{equa:cost_convergence_consistency}
		\lim_{k \to \infty} \mathcal{T}_c(\mu_{\mathbb{P}^{j_k}}, \mu_i^{j_k}) = \mathcal{T}_c(\nu, \mu_i).
	\end{equation}

	We now verify that $\nu$ is a barycenter of $\mathbb{P}$.
	Let $\tilde{\mu}$ be an arbitrary barycenter of $\mathbb{P}$ constructed via \Cref{prop:barycenter_construction}.
    By the same compactness argument, we have $\operatorname{supp}(\tilde{\mu}) \subset K^\prime$.
	Since $\mu_{\mathbb{P}^{j_k}}$ is a barycenter of $\mathbb{P}^{j_k}$,
	\[
		\sum_{i=1}^n \lambda_i\, \mathcal{T}_c(\mu_{\mathbb{P}^{j_k}}, \mu_i^{j_k}) \le \sum_{i=1}^n \lambda_i\, \mathcal{T}_c(\tilde{\mu}, \mu_i^{j_k}).
	\]
	Taking the limit as $k \to \infty$, the left-hand side converges to $\sum_{i=1}^n \lambda_i\, \mathcal{T}_c(\nu, \mu_i)$ due to \eqref{equa:cost_convergence_consistency}.
    Applying the stability theorem to the right-hand side (where the first marginal $\tilde{\mu}$ is fixed and $\mu_i^{j_k} \rightharpoonup \mu_i$), it converges to $\sum_{i=1}^n \lambda_i\, \mathcal{T}_c(\tilde{\mu}, \mu_i)$.
	This yields:
	\[
		\sum_{i=1}^n \lambda_i\, \mathcal{T}_c(\nu, \mu_i) \le \sum_{i=1}^n \lambda_i\, \mathcal{T}_c(\tilde{\mu}, \mu_i).
	\]
	Since $\tilde{\mu}$ minimizes the right-hand side by definition, the inequality must be an equality, proving that $\nu$ is a barycenter of $\mathbb{P}$.
\end{proof}

\begin{rmk}[Consistency on proper metric spaces]
	It is worth noting that the proof of \Cref{prop:consistency_finite_case} does not invoke the differential structure of the Riemannian manifold. It relies exclusively on Prokhorov's theorem, the continuity of the cost function $h(d)$ on compact sets, and the topological properties of barycenters established in \Cref{lem:barycenter_properties}. Therefore, the existence, compactness, and consistency of generalized Wasserstein barycenters hold identically on any proper metric space $(E, d)$.
\end{rmk}

\section{Absolute continuity of barycenter measures}

\label{sec:absolute_continuity}

This section is devoted to the proof of the absolute continuity of generalized Wasserstein barycenters.
Our approach relies on the regularity of the optimal transport map from
the barycenter measure $\mu_\mathbb{P}$ back to the absolutely continuous marginal $\mu_1$.
Following the strategy developed in \cite{ma2025absolute}, we approximate the remaining marginals $\mu_2, \ldots, \mu_n$ by discrete measures.
This allows us to disintegrate the multi-marginal optimal plan into conditional slices where the points $\matheuvm{x}^\prime = (x_2, \ldots, x_n)$ act as fixed spatial anchors.
Given a target barycenter $z$ generated by a configuration $(x_1, \matheuvm {x}^\prime)$, the first-order condition \eqref{equa:first_order_condition} uniquely determines the source point $x_1$
thanks to \Cref{lem:barycenter_injectivity}.
We can thus express $x_1$ as an inverse map $F(z) = x_1$ defined explicitly by
\begin{equation}
	\label{eq:explicit_F_intro}
	F(z) = \exp_z \left( -(h^\prime)^{-1}(\|V(z)\|) \frac{V(z)}{\|V(z)\|} \right), \quad \text{where }
	V(z) := -\frac{1}{\lambda_1} \sum_{i=2}^n \lambda_i \nabla_z[h(d_{\matheuvm{g}}(z, x_i))].
\end{equation}

The absolute continuity of the barycenter is ultimately governed by the volume distortion of $F$.
However, for a general cost lacking quadratic growth, such as the case $h(d) = d^p / p$ with $1 < p < 2$, this map exhibits severe analytical singularities.
Specifically, when the barycenter $z$ collides with a marginal point $x_i$, the distance $d_{\matheuvm{g}}$ vanishes.
Since $h$ fails to be $\mathcal{C}^2$ at the origin, the vector field $V(y)$ loses its differentiability, and the map $F$ ceases to be locally Lipschitz continuous.

In the Euclidean setting, Brizzi, Friesecke, and Ried \cite{brizzi2025p} address this lack of regularity by analyzing the barycenter map defined on the full configuration space \cite[section 2.2]{brizzi2025p}.
They partition the support of the optimal plan into a regular set, where the barycenter is disjoint from all marginal points, and singular sets, where it coincides with one or more marginals.
On the regular set, they leverage the $c$-cyclical monotonicity \cite[Proposition 3.5]{brizzi2026h} of the optimal plan to establish a local quantitative injectivity estimate \cite[Proposition 3.2]{brizzi2025p}.
It is a local Lipschitz estimate allowing them to use the $m$-dimensional Hausdorff measure to show that the projection of this preimage onto any marginal space preserves Lebesgue null sets \cite[Lemma 3.3]{brizzi2025p}.
However, deriving this multi-dimensional injectivity estimate intrinsically relies on the strict translational symmetry of flat space to algebraically deduce \cite[(5.5)]{brizzi2026h},
when $h(d_{\matheuvm{g}}(z, x_i)) = \| z - x_i \|$,
\begin{equation}
	\label{equa:euclidean_symmetry}
	\operatorname{Hess}_{x_i} [h(d_{\matheuvm{g}}(z, x_i))] = \operatorname{Hess}_z[h(d_{\matheuvm{g}}(z, x_i))] = - \nabla_{x_i} \nabla_z[h(d_{\matheuvm{g}}(z, x_i))].
\end{equation}
On general Riemannian manifolds, this symmetry is broken by the twisting of Jacobi fields, rendering such  inverse-Lipschitz bounds intractable.
Furthermore, on the singular sets, the barycenter map acts locally as a coordinate projection \cite[(3.4)]{brizzi2025p}.
To ensure the barycenter measure assigns zero mass to Lebesgue null sets within these regions, the corresponding marginals with collisions are assumed to be absolutely continuous \cite[Theorem 3.5]{brizzi2025p}.

Instead of estimating the Hausdorff measure of projected pre-images from the full configuration space, our discrete approximation framework allows us to analyze the barycenter map directly on the manifold. By discretizing the measures $\mu_2, \ldots, \mu_n$, we freeze the coordinates $(x_2, \ldots, x_n) = \matheuvm{v}$ into fixed spatial anchors.
This anchoring strategy, adapted from \cite[section 2.3]{ma2025absolute},
allows us to completely bypass the need for the restrictive symmetry \eqref{equa:euclidean_symmetry} to control how simultaneous variations across all marginals distort the barycenter.
To explicitly emphasize this dependence on the conditional slicing, we hereafter denote the inverse map as $F_{\matheuvm{v}}(z)$.
The volume distortion problem is thus reduced entirely to bounding the differential $\di F_{\matheuvm{v}} (z)$ on conditional slices to deduce uniform Lipschitz estimates.

To ensure the Lipschitz continuity of $F_{\matheuvm{v}}$, the analytical singularities at $d=0$ must be resolved. Two natural strategies arise. The first is geometric exclusion: enforcing a strictly positive lower bound on the distances $d_{\matheuvm{g}}(z, x_i)$ inherently shields $F_{\matheuvm{v}}$ from the non-differentiability of the distance function and the origin singularity of $(h^\prime)^{-1}$.
The second strategy is analytical smoothing: rather than avoiding collisions, we assume the cost profile $h$ is sufficiently flat at the origin.
Specifically, the $\mathcal{C}^2$-continuity of $h$ at $0$, typical for $h(d) = d^p/p$ with $p \ge 2$,
\[
	\lim_{t \downarrow 0} h^{\prime\prime}(t) = h^{\prime\prime}(0)
	:= \lim_{t \downarrow 0} \frac{h^\prime(t)}{t}
\]
ensures that the Riemannian Hessian of $h(d_{\matheuvm{g}}(z, x_i))$ varies smoothly at $z = x_i$.
This neutralizes the singularity, preserving the Lipschitz continuity of $F_{\matheuvm{v}}$ directly across marginal collisions.

\subsection{Absolute continuity via approximation}
\label{sec:absolute_continuity_approximation}

In this subsection, we formalize the measure-theoretic framework that allows us to deduce the absolute continuity of a target measure from a sequence of discrete approximations.
To track absolute continuity under weak convergence,
we introduce the following characterizing class defined for positive real numbers $\epsilon, \delta > 0$,
\begin{equation}
	\label{eq:defn_E_epsilon_delta}
	\mathcal{E}_{\epsilon, \delta} := \left\{ \mu \in \mathcal{P}(M) \;\middle|\; \forall\, N \in \mathcal{B}(M),\, \operatorname{Vol}(N) < \delta \implies \mu(N) \le \epsilon \right\}.
\end{equation}
As established in \cite[Lemma 2.12]{ma2025absolute}, the set of all absolutely continuous probability measures on $M$ can be fully characterized as
\[
	\mathcal{A} = \bigcap_{k \in \mathbb{N}} \bigcup_{l \in \mathbb{N}} \mathcal{E}_{2^{-k}, 2^{-l}}.
\]
Crucially, each class $\mathcal{E}_{\epsilon, \delta}$ is shown to be
closed with respect to the weak convergence of measures.
This topological closure ensures that if a sequence of approximating measures belongs to a common $\mathcal{E}_{\epsilon, \delta}$ class, their weak limit inherently retains the same volume-distortion bounds.

We now synthesize this property into a general proposition, based on the proofs of \cite[section 2.3]{ma2025absolute}.
To accommodate the decomposition strategy employed in our main theorem, we state the result
with slightly relaxed assumptions.

\begin{prop}
	\label{prop:absolute_continuity_via_approximation}
	Let $(M, \matheuvm{g})$ be a complete Riemannian manifold.
	Fix $n \ge 2$ compactly supported
	probability measures $\mu_1, \ldots, \mu_n \in \mathcal{P}(M)$ with positive weights $\lambda_i > 0$
	satisfying $\sum_{i=1}^n \lambda_i = 1$.
	Suppose that $\mu_1$ is absolutely continuous with respect to $\operatorname{Vol}$.
	Define $\mathbb{P} := \sum_{i=1}^n \lambda_i\, \delta_{\mu_i}$.
	According to \Cref{prop:barycenter_construction} and \Cref{thm:barycenter_uniqueness},
	the unique barycenter of $\mathbb{P}$ is $\mu_{\mathbb{P}} := B_{\#} \gamma$, where $\gamma \in \Pi(\mu_1, \ldots, \mu_n)$
	is a multi-marginal optimal transport plan and
	$B: M^n \to M$ is a measurable barycenter selection map.
	Introduce the closed set $\Gamma = \{ (\matheuvm{x}, z) \mid z \in \operatorname{bary}(\{\matheuvm{x}\}) \}$
	as in \Cref{lem:barycenter_properties}.
	The barycenter $\mu_\mathbb{P}$ is absolutely continuous provided that
	there exist a Borel set $\Omega \subset \Gamma$ and a constant $L > 0$
	satisfying the following two conditions:
	\begin{enumerate}
		\item $\widetilde \gamma \left(\widebar{\Gamma \setminus \Omega }\right) = 0$ with $\widetilde \gamma : = (\operatorname{Id}_{M^n}, B)_{\#} \gamma$;
		\item for any point $\matheuvm {v} := (x_2, \ldots, x_n) \in M^{n-1}$
		      such that $\Omega$ intersects with $ M \times \{ \matheuvm{v} \} \times M $,
		      there exists a Borel set $Z \in \mathcal{B}(M)$ and an $L$-Lipschitz continuous
		      function $F_{\matheuvm{v}}: Z \rightarrow M$ satisfying
		      \[
			      \{ (F_{\matheuvm{v}}(z), \matheuvm{v}, z) \mid z \in Z \}
			      = [ M \times \{ \matheuvm{v} \} \times M ] \cap \Omega.
		      \]
	\end{enumerate}
\end{prop}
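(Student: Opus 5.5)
The plan is to bound $\mu_{\mathbb P}(N)$ for Borel sets $N$ of small volume, uniformly along a sequence of approximating barycenters. We then pass to the limit using the weak closedness of the classes $\mathcal E_{\epsilon,\delta}$ from \eqref{eq:defn_E_epsilon_delta} and the characterization $\mathcal A=\bigcap_k\bigcup_l\mathcal E_{2^{-k},2^{-l}}$. The target is a Vitali-type estimate: for every $R>0$ and every Borel $N\subset M$,
\[
	\mu(N)\le C_K L^m R\,\operatorname{Vol}(N)+\mu_1(\{\rho_1>R\}),
\]
where $\rho_1$ is the density of $\mu_1$, $m=\dim M$, and $C_K$ is a constant depending on the geometry of a fixed compact set $K'\supset\operatorname{bary}(K^n)$ and on $L$ only. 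Taking $R$ large and then $\operatorname{Vol}(N)$ small puts $\mu$ in every $\bigcup_l\mathcal E_{2^{-k},2^{-l}}$.

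\textbf{Discrete model computation.} Suppose first that $\gamma$ disintegrates over finitely or countably many anchors, $\gamma=\sum_{\matheuvm v}\gamma_{\matheuvm v}\otimes\delta_{\matheuvm v}$, with $\sum_{\matheuvm v}\gamma_{\matheuvm v}=\mu_1$. Then each $\gamma_{\matheuvm v}$ has a density $f_{\matheuvm v}\le\rho_1$.
\begin{enumerate}
	\item By condition 1, $\mu_{\mathbb P}(N)=\widetilde\gamma((M^n\times N)\cap\Omega)$, so the complement of $\Omega$ can be ignored.
	\item By condition 2, the mass of $\widetilde\gamma$ on the slice $\matheuvm v$ inside $M^n\times N$ equals $\gamma_{\matheuvm v}(F_{\matheuvm v}(N\cap Z_{\matheuvm v}))$.
	\item Since $F_{\matheuvm v}$ is $L$-Lipschitz on the compact region $K'$, the area formula gives
	\[
		\gamma_{\matheuvm v}(F_{\matheuvm v}(N\cap Z_{\matheuvm v}))\le\int_{N\cap Z_{\matheuvm v}}f_{\matheuvm v}(F_{\matheuvm v}(z))\,C_KL^m\diff\operatorname{Vol}(z).
	\]
	\item By \Cref{lem:barycenter_injectivity} (equivalently the map $\matheuvm F$ of \Cref{coro:multi_marginal_plan_structure}), a point $z$ lies in at most one $Z_{\matheuvm v}$ up to a null set. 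Hence the sets $N\cap Z_{\matheuvm v}$ are essentially disjoint.
	\item Split $f_{\matheuvm v}=f_{\matheuvm v}\mathbbm 1_{\{\rho_1\le R\}}+f_{\matheuvm v}\mathbbm 1_{\{\rho_1>R\}}$. The first part contributes at most $C_KL^mR\operatorname{Vol}(N)$, using disjointness. The second part, summed over $\matheuvm v$, contributes at most $\sum_{\matheuvm v}\gamma_{\matheuvm v}(\{\rho_1>R\})=\mu_1(\{\rho_1>R\})$.
\end{enumerate}
This yields the target estimate with constants independent of the number of anchors.

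\textbf{Approximation step.} For general $\mu_2,\dots,\mu_n$, the conditional measures of $\mu_1$ given $\matheuvm v$ need not be absolutely continuous, so step 5 breaks down. I would therefore approximate, following \cite[section 2.3]{ma2025absolute}. Replace $\mu_i$ ($i\ge2$) by discrete $\mu_i^j\rightharpoonup\mu_i$ supported in a common compact $K$. Take the corresponding plans $\gamma^j$, which are unique by \Cref{coro:multi_marginal_plan_structure} and have the anchored form above. Extract a weakly convergent subsequence of barycenters via \Cref{prop:consistency_finite_case}. The limit is $\mu_{\mathbb P}$ by \Cref{thm:barycenter_uniqueness}. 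Closedness of $\mathcal E_{\epsilon,\delta}$ then transfers the uniform estimate to $\mu_{\mathbb P}$.

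\textbf{Main obstacle.} Conditions 1 and 2 are stated for the limit plan $\widetilde\gamma$, but the model computation needs them on the approximating plans. I would arrange the approximations so that $\gamma^j$ charges, up to small mass, only configurations close to $\operatorname{supp}\widetilde\gamma\cap\Omega$. This is where the closure $\widebar{\Gamma\setminus\Omega}$ in condition 1 is used: $\operatorname{supp}\widetilde\gamma$ avoids an open neighbourhood-type set, and weak convergence with upper semicontinuity on closed sets controls the mass $\widetilde\gamma^j$ puts near $\widebar{\Gamma\setminus\Omega}$. The step I expect to be delicate is proving that the slice maps of $\gamma^j$ are uniformly Lipschitz with the same constant $L$. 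I would first try to show that the relevant slice maps for $\gamma^j$ are restrictions of the given maps $F_{\matheuvm v}$, or small perturbations of them, at anchors $\matheuvm v$ near $\operatorname{supp}(\mu_2\otimes\dots\otimes\mu_n)$. If that fails, I would fall back on absorbing a small error mass $\epsilon_j\to0$ into the bound, which still suffices since only membership in $\mathcal E_{\epsilon,\delta}$ is needed in the limit.
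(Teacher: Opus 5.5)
Your overall route is the paper's: keep $\mu_1$, discretize $\mu_2,\dots,\mu_n$, read each approximating lifted plan as finitely many $L$-Lipschitz graphs over disjoint barycenter sets indexed by anchors, and pass to the limit through the weakly closed classes $\mathcal E_{\epsilon,\delta}$. Your truncation bound $\mu(N)\le L^mR\operatorname{Vol}(N)+\mu_1(\{\rho_1>R\})$ only repackages the paper's implication $\mu_1\in\mathcal E_{\epsilon,\delta}\Rightarrow\mu_{\mathbb P}\in\mathcal E_{\epsilon,\delta/L^m}$.

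The genuine gap is how Condition 1 reaches the approximations. Set $\widetilde\gamma^j:=(\operatorname{Id}_{M^n},B)_{\#}\gamma^j$ and $\matheuvm{K}:=\widebar{\Gamma\setminus\Omega}$. Your discrete computation then gives $\mu_{\mathbb P^j}(N)\le L^mR\operatorname{Vol}(N)+\mu_1(\{\rho_1>R\})+\widetilde\gamma^j(\matheuvm{K})$, so you need $\limsup_j\widetilde\gamma^j(\matheuvm{K})=0$. Upper semicontinuity on closed sets yields this only once you know $\widetilde\gamma^j\rightharpoonup\widetilde\gamma$, which you tacitly assume. It does not follow from $\gamma^j\rightharpoonup\gamma$ or from $\mu_{\mathbb P^j}\rightharpoonup\mu_{\mathbb P}$ taken separately. $B$ is only a measurable selection of an a priori multivalued barycenter map, so nothing prevents the selected barycenters of the approximating configurations from accumulating on $\matheuvm{K}$.

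The paper supplies exactly this step:
\begin{enumerate}
\item $\gamma^j\rightharpoonup\gamma$ by stability and the uniqueness of $\gamma$ (\Cref{coro:multi_marginal_plan_structure}).
\item The $\widetilde\gamma^j$ are tight, and any limit $\eta$ is carried by the closed set $\Gamma$ with marginals $\gamma$ and $\mu_{\mathbb P}$.
\item Hence every $(\matheuvm{x},z)\in\operatorname{supp}\eta$ has $\matheuvm{x}\in\operatorname{supp}\gamma$ and $z\in\operatorname{supp}\mu_{\mathbb P}\cap\operatorname{bary}(\{\matheuvm{x}\})$. \Cref{lem:barycenter_injectivity} then forces $\matheuvm{x}=\matheuvm{F}(z)$.
\item Therefore $\eta=(\matheuvm{F},\operatorname{Id}_M)_{\#}\mu_{\mathbb P}=(\operatorname{Id}_{M^n},B)_{\#}\gamma=\widetilde\gamma$.
\end{enumerate}
Identifying the $z$-marginal and combining it with injectivity is the idea missing from your sketch.

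Conversely, what you single out as the delicate step is immediate. Condition 2 is a property of the set $\Omega$, not of $\widetilde\gamma$. Since $\operatorname{supp}\widetilde\gamma^j\subset\Gamma$, every point of $\operatorname{supp}\widetilde\gamma^j$ outside $\matheuvm{K}$ lies in $\Gamma\setminus\widebar{\Gamma\setminus\Omega}\subset\Omega$. It is therefore on the graph of the given $L$-Lipschitz map $F_{\matheuvm{v}}$, whatever anchors the discretization produces. No perturbation argument or special arrangement of the approximations is needed.

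Two smaller repairs are needed in your model computation.
\begin{enumerate}
\item In Step 4, state disjointness for $Y_{\matheuvm{v}}:=\{z\in\operatorname{supp}\mu_{\mathbb P^j}\mid \matheuvm{F}^j(z)\in M\times\{\matheuvm{v}\}\}$, with $\matheuvm{F}^j$ the map of the corollary for $\gamma^j$. There it is exact because $\matheuvm{F}^j$ is a map. It can fail for the sets $Z_{\matheuvm{v}}$ of Condition 2: these are slices of $\Omega$, not of the support, and may overlap, since injectivity holds only on $\operatorname{supp}\gamma^j$.
\item In Step 5, apply the truncation to $\gamma_{\matheuvm{v}}(F_{\matheuvm{v}}(A))=\int_{F_{\matheuvm{v}}(A)}f_{\matheuvm{v}}\,\diff\operatorname{Vol}$ before any change of variables. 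The bounded part then needs only $\operatorname{Vol}(F_{\matheuvm{v}}(A))\le L^m\operatorname{Vol}(A)$. After the area formula, the tail term carries a multiplicity or $L^m$ factor and is no longer controlled by $\gamma_{\matheuvm{v}}(\{\rho_1>R\})$.
\end{enumerate}
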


\begin{proof}
	For each $\mu_i$ with $2 \le i \le n$, we approximate it in the weak topology with finitely supported
	measures $\{ \mu_i^j \}_{ j \ge 1 }$ such that $\operatorname{supp}(\mu_i^j) \subset \operatorname{supp}(\mu_i)$.
	Denote by $\gamma^j \in \Pi(\mu_1, \mu_2^j, \ldots, \mu_n^j)$ the multi-marginal optimal
	transport for the measure
	$\mathbb{P}^j = \lambda_1 \delta_{\mu_1} + \sum_{i=2}^n \lambda_i \delta_{\mu_i^j}$,
	which is unique according to \Cref{coro:multi_marginal_plan_structure}
	since $\mu_1$ is absolutely continuous.
	By \Cref{prop:barycenter_construction} and \Cref{thm:barycenter_uniqueness},
	$\mu_{\mathbb{P}^j} : = B_{\#} \gamma^j$ is the unique barycenter of $\mathbb{P}^j$.
	Up to extracting a sub-sequence, we can assume without loss of generality that
	the weak convergence $\gamma^j \rightharpoonup \gamma$ by the stability
	of optimal transport plans \cite[Theorem 5.20]{villani2009optimal},
	and the weak convergence $\mu_{\mathbb{P}^{j}} \rightharpoonup \mu_{\mathbb{P}}$
	by \Cref{prop:consistency_finite_case}.
	Hence, the sequence $\widetilde \gamma^j : = (\operatorname{Id}_{M^n}, B)_{\#} \gamma^j$
	is tight in $\mathcal{P}(\Gamma)$ because $\operatorname{supp}(\widetilde \gamma^j) \subset \Gamma$
	and the marginals $\gamma^j$ and $\mu_{\mathbb{P}^j}$ converge weakly.
	We prove $\widetilde \gamma^j$ converges weakly to $\widetilde \gamma$
	by showing that any possible weak limit $\eta \in \mathcal{P}(\Gamma)$
	of $\{\widetilde \gamma^j\}_{j \ge 1}$ after extracting a sub-sequence must coincide with $\widetilde \gamma$.
	Fix a point $(\matheuvm{x}, z) \in \operatorname{supp}(\eta)$.
	By the characterization of weak convergence via continuous functions,
	$\gamma$ and $\mu_{\mathbb{P}} = B_{\#} \gamma$ in this order are
	the two marginals of $\eta$ since projection maps are continuous,
	which implies $\matheuvm{x} \in \operatorname{supp}(\gamma)$
	and $z \in \operatorname{supp}(\mu_\mathbb{P})$.
	Moreover, since $\operatorname{supp}(\eta) \subset \Gamma$,
	$z \in \operatorname{bary}(\{ \matheuvm {x} \})$,
	\Cref{lem:barycenter_injectivity} implies that $\matheuvm {x}$
	is uniquely determined by $z$ via the map
	$\matheuvm {F}(z) = \matheuvm {x}$ in \Cref{coro:multi_marginal_plan_structure}.
	Hence, $\eta$ assigns full mass to
	the image of $\operatorname{supp}(\mu_\mathbb{P})$ under the map $z \mapsto (\matheuvm{F} (z), z)$.
	It follows that
	\begin{equation}
		\label{equa:lift_measure_as_push_forward}
		\eta = (\matheuvm{F}, \operatorname{Id}_M)_{\#} \mu_\mathbb{P}
		= (\matheuvm{F} \circ B, B)_{\#} \gamma
		= (\operatorname{Id}_{M^n}, B)_{\#} \gamma
		= \widetilde \gamma.
	\end{equation}

	Define $\matheuvm {K} : = \widebar{\Gamma \setminus \Omega }$
	and $\matheuvm {O}: = M^{n+1} \setminus \matheuvm{K}$.
	Since $\widetilde \gamma^j \rightharpoonup \widetilde \gamma$
	and $\matheuvm {K}$ is closed with $\widetilde \gamma(\matheuvm {K}) = 0$ by Condition 1,
	$\lim_{j \rightarrow \infty} \widetilde \gamma^j (\matheuvm {K}) = \widetilde {\gamma}(\matheuvm {K}) = 0$.
	We first prove the following claim, for any $\epsilon, \delta >0$
	and the set $\mathcal{E}_{\epsilon, \delta}$ defined in \eqref{eq:defn_E_epsilon_delta},
	\begin{equation}
		\label{equa:ac_sets_implication}
		\mu_1 \in \mathcal{E}_{\epsilon, \delta} \implies
		\mu_{\mathbb{P}} \in \mathcal{E}_{\epsilon, \delta / L^m}.
	\end{equation}
	Fix $\epsilon, \delta >0$ and suppose $\mu_1 \in \mathcal{E}_{\epsilon, \delta}$.
	Let $N \in \mathcal{B}(M)$ be an arbitrarily chosen Borel set
	such that $\operatorname{Vol} (N) < \delta / L^m$.
	Since the volume measure $\operatorname{Vol}$ is outer regular,
	there exists an open neighborhood $U$ of $N$ such that
	$\operatorname{Vol}(N) \le \operatorname{Vol}(U) < \delta / L^m$.
	By the weak convergence $\mu_{\mathbb{P}^j} \rightharpoonup \mu_{\mathbb{P}}$,
	\begin{align}
		\label{equa:pass_to_limit_barycenter_measure}
		\mu_{\mathbb{P}}(N) \le \mu_{\mathbb{P}}(U)
		\le \liminf_{j \rightarrow \infty}\mu_{\mathbb{P}^j}(U)
		 & \le \liminf_{j \rightarrow \infty} \left\{\widetilde \gamma^j([M^n \times U] \cap \matheuvm {O})
		\nonumber
		+ \widetilde \gamma^j(\matheuvm {K})\right\}                                                        \\
		 & = \liminf_{j \rightarrow \infty} \widetilde \gamma^j |_{\matheuvm{O}} (M^n \times U).
	\end{align}
	Hence, to prove claim \eqref{equa:ac_sets_implication}, it suffices to show
	that $\widetilde \gamma^j|_{\matheuvm{O}}(M^n \times U) \le \epsilon$,
	which is a slightly generalized version of \cite[Proposition 2.22]{ma2025absolute}.
	We fix an index $j$ and reformulate its proof for completeness.
	Applying \Cref{coro:multi_marginal_plan_structure}
	to $\widetilde \gamma^j$,
	we obtain two continuous maps $T_1: \operatorname{supp}(\mu_{\mathbb{P}^j}) \rightarrow M$ and $T_2: \operatorname{supp}(\mu_{\mathbb{P}^j}) \rightarrow M^{n-1}$,
	such that
	\[
		\widetilde \gamma^j = (T_1, T_2, \operatorname{Id}_M)_{\#} \mu_{\mathbb{P}^j}.
	\]
	Since $\mu_i^j$ for $2 \le i \le n$ are finitely supported measures,
	$T_2$ takes at most finitely many distinct values.
	Hence, the inclusion $\Gamma \cap \matheuvm{O} \subset \Omega$ and Condition 2 imply
	that there are Borel sets $Y_1, \ldots, Y_l \subset \operatorname{supp}(\mu_{\mathbb{P}^j})$
	and distinct values $\matheuvm{v}_1, \ldots, \matheuvm{v}_l \in M^{n-1}$
	satisfying $T_2 |_{Y_k} \equiv \matheuvm{v}_k$,
	\begin{equation}
		\label{equa:support_Lipschitz_inclusion}
		\operatorname{supp}(\widetilde \gamma^j) \cap {\matheuvm{O}}
		\subset \bigcup_{k=1}^{l}
		\{ (F_{\matheuvm{v}_k}(z) , \matheuvm{v}_k, z) \mid z \in Y_k \}
		\subset \operatorname{supp}(\widetilde \gamma^j),
	\end{equation}
	and $F_{\matheuvm{v}_k}$ is $L$-Lipschitz continuous on $Y_k$.
	Note that $F_{\matheuvm{v}_k} |_{Y_k} = T_1 |_{Y_k}$
	by our construction of $(T_1, T_2):  \operatorname{supp}(\mu_{\mathbb{P}^j}) \rightarrow  M^n$
	using \Cref{coro:multi_marginal_plan_structure},
	and $Y_1, \ldots, Y_l$ are disjoint since their images
	under $T_2$ are distinct values $\matheuvm{v}_1, \ldots, \matheuvm{v}_l$.
	Define $Y : = \cup_{k=1}^l Y_k$.
	The $L$-Lipschitz continuity of $T_1$ on its disjoint components $Y_k$ implies
	\cite[Proposition 12.6, Proposition 12.12, Remark after Proposition 12.12]{taylor2006measure}
	\[
		\operatorname{Vol}(T_1(Y \cap U)) \le
		\sum_{k=1}^l \operatorname{Vol}(T_1(Y_k \cap U)) \le
		\sum_{k=1}^l L^m \operatorname{Vol}(Y_k \cap U) =
		L^m \operatorname{Vol}(U)
		< \delta.
	\]
	Though $T_1(Y \cap U)$ is not necessarily a Borel set,
	there exists a Borel set $W \in \mathcal{B}(M)$ containing it
	with equal volume measure \cite[Corollary 1.5.8]{bogachev2007measure}.
	Since $\mu_1 \in \mathcal{E}_{\epsilon, \delta}$ and $T_1(Y \cap U) \subset W$,
	\eqref{equa:support_Lipschitz_inclusion} implies
	\[
		\widetilde{\gamma}^j|_{\matheuvm{O}}(M^n \times U)
		\le \widetilde{\gamma}^j|_{\matheuvm{O}}(W \times M^n)
		\le \widetilde{\gamma}^j (W \times M^n)
		= \mu_1(W) \le \epsilon,
	\]
	since $\operatorname{Vol}(W) = \operatorname{Vol}(T_1(Y \cap U)) < \delta$
	and $\mu_1 \in \mathcal{E}_{\epsilon, \delta}$ by assumption.
	This concludes the proof of our claim \eqref{equa:ac_sets_implication},
	thanks to the inequality \eqref{equa:pass_to_limit_barycenter_measure}.

	Since $\mu_1$ is absolutely continuous,
	\eqref{equa:ac_sets_implication} implies that
	$\mu_\mathbb{P}$ is also absolutely continuous \cite[Lemma 2.12]{ma2025absolute}.
\end{proof}

\begin{rmk}
	For the quadratic case $h(d) = d^2 / 2$, $F_{\matheuvm{v}}(y)$ is reduced to $\exp_y(-V_{\matheuvm{v}}(y))$
	and the proof of \cite[Theorem 2.24]{ma2025absolute} shows that the set
	\[
		\Omega := \Gamma \cap [\operatorname{supp}(\mu_1) \times \cdots \times \operatorname{supp}(\mu_n) \times M]
	\]
	satisfies the two required conditions, with $L$ determined by the compact supports and
	the Rauch comparison theorem.
\end{rmk}

\subsection{Lipschitz continuity on regular regions}
\label{sec:lipschitz_regular_regions}

Before proving the Lipschitz continuity required by Condition 2 of \Cref{prop:absolute_continuity_via_approximation}, we first address the explicit construction of the inverse map $F_{\matheuvm{v}}$.

\begin{lem}[Explicit expression of the inverse map]
	\label{lem:explicit_inverse_map}
	Let $(M, \matheuvm{g})$ be a complete Riemannian manifold.
	Fix $n \ge 2$ positive coefficients $\lambda_i > 0$ satisfying $\sum_{i=1}^n \lambda_i = 1$, and define the closed set $\Gamma = \{ (\matheuvm{x}, z) \mid z \in \operatorname{bary}(\{\matheuvm{x}\}) \}$
	as in \Cref{lem:barycenter_properties}.
	For any point $(x_1, \matheuvm{v}, z) \in \Gamma$ with $\matheuvm{v} = (x_2, \ldots, x_n) \in M^{n-1}$, if $z \neq x_1$, then $x_1$ is uniquely determined by $z$ and $\matheuvm{v}$ via the map $x_1 = F_{\matheuvm{v}}(z)$, where
	\begin{equation}
		\label{equa:F_v_explicit_expression}
		F_{\matheuvm{v}}(z) :=
		\exp_z \left( -(h^\prime)^{-1}(\|V_{\matheuvm{v}}(z)\|) \frac{V_{\matheuvm{v}}(z)}{\|V_{\matheuvm{v}}(z)\|} \right), \quad \text{and} \quad
		V_{\matheuvm{v}}(z) :=
		-\frac{1}{\lambda_1} \sum_{i=2}^n \lambda_i \nabla_z[h(d_{\matheuvm{g}}(z, x_i))].
	\end{equation}
\end{lem}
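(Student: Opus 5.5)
The plan is to read off $x_1$ from the first-order optimality condition \eqref{equa:first_order_condition} at $z$. Then we invert the gradient of $h(d_{\matheuvm{g}}(\cdot, x_1))$ exactly as in the proof of \Cref{lem:tangency}.

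Fix $(x_1, \matheuvm{v}, z) \in \Gamma$ with $z \neq x_1$, so $z$ is an $h$-barycenter of $\sum_i \lambda_i \delta_{x_i}$. By \Cref{lem:cut_locus_avoidance}, $z \notin \operatorname{Cut}(x_i)$ for every $i$. Hence each map $y \mapsto h(d_{\matheuvm{g}}(y, x_i))$ is differentiable at $z$:
\begin{itemize}
\item for $x_i = z$, the gradient is $0$ by \ref{assumption:h_C_2_smooth}, as noted after \Cref{lem:cut_locus_avoidance};
\item for $x_i \neq z$, the gradient is $-h'(d_{\matheuvm{g}}(z,x_i))\,\exp_z^{-1}(x_i)/d_{\matheuvm{g}}(z,x_i)$, as computed in the proof of \Cref{lem:barycenter_injectivity}.
\end{itemize}
So \eqref{equa:first_order_condition} holds at $z$. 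Isolating the $i=1$ term gives
\[
\nabla_z[h(d_{\matheuvm{g}}(z, x_1))] = -\frac{1}{\lambda_1}\sum_{i=2}^n \lambda_i \nabla_z[h(d_{\matheuvm{g}}(z,x_i))] = V_{\matheuvm{v}}(z).
\]
The vector $V_{\matheuvm{v}}(z)$ depends only on $z$ and $\matheuvm{v}$. Note that $V_{\matheuvm{v}}$ is only claimed to make sense at such $z$, i.e.\ at points off every $\operatorname{Cut}(x_i)$, $i\ge 2$.

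Next, write $d := d_{\matheuvm{g}}(z,x_1) > 0$ and $u := \exp_z^{-1}(x_1)/d$, a unit vector; this is well defined since $x_1 \notin \operatorname{Cut}(z)$ by the symmetry of cut loci. The identity above reads $-h'(d)\, u = V_{\matheuvm{v}}(z)$. Taking norms gives $\|V_{\matheuvm{v}}(z)\| = h'(d) > 0$ by \ref{assumption:h_strict_convexity}, so the normalization $V_{\matheuvm{v}}(z)/\|V_{\matheuvm{v}}(z)\|$ is legitimate. Since $h'$ is strictly increasing and continuous on $(0,\infty)$, it is injective there, and $d = (h')^{-1}(\|V_{\matheuvm{v}}(z)\|)$. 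Moreover $u = -V_{\matheuvm{v}}(z)/\|V_{\matheuvm{v}}(z)\|$. Because $x_1 = \exp_z(d\,u)$ along the unique minimal geodesic, we obtain $x_1 = F_{\matheuvm{v}}(z)$ as in \eqref{equa:F_v_explicit_expression}.

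Uniqueness is then immediate. Any other $\tilde x_1 \neq z$ with $(\tilde x_1, \matheuvm{v}, z)\in\Gamma$ satisfies the same relation with the same $V_{\matheuvm{v}}(z)$, so $\tilde x_1 = F_{\matheuvm{v}}(z) = x_1$. This also rules out the degenerate competitor $\tilde x_1 = z$ when $V_{\matheuvm{v}}(z)\neq 0$, because that case forces $V_{\matheuvm{v}}(z) = 0$.

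The only delicate point, which I expect to be the main obstacle, is justifying differentiability at $z$ when some $x_i$ with $i\ge2$ coincides with $z$. This is handled by $h'(0)=0$: the term $h(d_{\matheuvm{g}}(\cdot,x_i))$ is differentiable at $z$ with zero gradient, so it drops out of both \eqref{equa:first_order_condition} and $V_{\matheuvm{v}}(z)$. A second point is that the range of $h'$ must contain $\|V_{\matheuvm{v}}(z)\|$. This is automatic here, since $\|V_{\matheuvm{v}}(z)\| = h'(d)$ is attained by construction.
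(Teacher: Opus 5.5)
Your proposal is correct and follows essentially the same route as the paper. Both use cut-locus avoidance to get the first-order condition, isolate the $i=1$ term to obtain $V_{\matheuvm{v}}(z) = \nabla_z[h(d_{\matheuvm{g}}(z,x_1))]$ with norm $h'(d_{\matheuvm{g}}(z,x_1))>0$, and then invert $h'$ and read off the geodesic direction as in the Tangency Lemma. Your explicit handling of collisions $x_i = z$ for $i\ge 2$ and of the competitor $\tilde x_1 = z$ only spells out details the paper leaves implicit.
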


\begin{proof}
	Since $(\matheuvm{x}, z) \in \Gamma$, $z$ is a barycenter of the configuration $(x_1, \matheuvm{v})$.
	By \Cref{lem:cut_locus_avoidance}, $z \notin \operatorname{Cut}(x_i)$ for any $1 \le i \le n$.
	Hence, the first-order optimality condition \eqref{equa:first_order_condition} holds, yielding
	\[
		\lambda_1 \nabla_z[h(d_{\matheuvm{g}}(z, x_1))] + \sum_{i=2}^n \lambda_i \nabla_z [h(d_{\matheuvm{g}}(z, x_i))] = 0.
	\]
	As $z \neq x_1$, we have $d_{\matheuvm{g}}(z, x_1) > 0$.
	Rearranging the above equation gives $V_{\matheuvm{v}}(z) =
		\nabla_z[h(d_{\matheuvm{g}}(z, x_1))] =
		h^\prime(d_{\matheuvm{g}}(z, x_1))
		\nabla_z d_{\matheuvm{g}}(z, x_1)$.
	Since $\| \nabla_z d_{\matheuvm{g}}(z, x_1) \| = 1$,
	$ \| V_{\matheuvm{v}}(z) \| =  h^\prime(d_{\matheuvm{g}}(z, x_1))$.
	Our lemma thus follows from the same arguments in the proof of \Cref{lem:tangency}.
\end{proof}

The most straightforward way to ensure that $F_{\matheuvm{v}}$ is uniformly Lipschitz is to bound the configuration away from all collisions. This avoids both the singularity of $(h^\prime)^{-1}$ and the non-differentiability of $V_{\matheuvm{v}}$.

\begin{lem}[Lipschitz continuity strictly away from collisions]
	\label{lem:lipschitz_no_collision}
	Under the settings of \Cref{lem:explicit_inverse_map}, for any $\alpha > 0$, define the collision-free subset
	\begin{equation}
		\Omega_{\alpha} := \left\{ (\matheuvm{x}, z) \in \Gamma \;\middle|\; d_{\matheuvm{g}}(z, x_i) > \alpha \text{ for } 1 \le i \le n \right\}.
	\end{equation}
	For any compact subset $K \subset \Gamma$, there exists a constant $L > 0$ (depending on $K$ and $\alpha$) such that
	Condition $2$ of \Cref{prop:absolute_continuity_via_approximation} is satisfied by $\Omega_{\alpha} \cap K$.
	In particular, the map $F_{\matheuvm{v}}: Z \to M$ defined by \eqref{equa:F_v_explicit_expression}
	is $L$-Lipschitz continuous, where $Z := p_2(\left[M \times \{ \matheuvm{v} \} \times M \right] \cap \Omega_{\alpha} \cap K)$ and $p_2: M^{n} \times M \rightarrow M$ is the canonical projection.
\end{lem}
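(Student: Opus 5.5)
The plan is to show that on $\Omega_\alpha \cap K$ the map $F_{\matheuvm{v}}$ of \Cref{lem:explicit_inverse_map} is the restriction of a smooth map. This map is defined on a neighborhood of a compact set in the product $M^{n-1}\times M$, so its differential is bounded uniformly in $\matheuvm{v}$. The set equality in Condition 2 comes for free. For $(x_1,\matheuvm{v},z)\in\Omega_\alpha\cap K$ we have $z\neq x_1$, so $x_1=F_{\matheuvm{v}}(z)$ by \Cref{lem:explicit_inverse_map}. Conversely, each $z\in Z$ comes from some such triple. Hence $\{(F_{\matheuvm{v}}(z),\matheuvm{v},z)\mid z\in Z\}=[M\times\{\matheuvm{v}\}\times M]\cap\Omega_\alpha\cap K$, and $Z$ is Borel, being a projection of a slice of a Borel set through a continuous injective-on-slice parametrization. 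What remains is a Lipschitz constant $L$ independent of $\matheuvm{v}$.

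\emph{Step 1: smoothness of $V$.} Let $\widebar{K}_\alpha$ be the closure of $\Omega_\alpha\cap K$. It is compact, and on it $d_{\matheuvm{g}}(z,x_i)\ge\alpha$ for all $i$. By \Cref{lem:cut_locus_avoidance}, $z\notin\operatorname{Cut}(x_i)$ for every $(\matheuvm{x},z)\in\Gamma$, and $\Gamma$ is closed, so this holds on $\widebar{K}_\alpha$. The set $\{(x,y): y\notin\operatorname{Cut}(x),\ x\neq y\}$ is open and $d_{\matheuvm{g}}$ is smooth there. Hence there is an open neighborhood $\mathcal{U}$ of the projection of $\widebar{K}_\alpha$ to $M^{n-1}\times M$ on which $d_{\matheuvm{g}}(z,x_i)>\alpha/2$ and $z\notin\operatorname{Cut}(x_i)$ for $2\le i\le n$. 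On $\mathcal{U}$ the field $(\matheuvm{v},z)\mapsto V_{\matheuvm{v}}(z)=-\lambda_1^{-1}\sum_{i\ge2}\lambda_i h'(d(z,x_i))\nabla_z d(z,x_i)$ is $\mathcal{C}^1$, since $h\in\mathcal{C}^2$ on $(0,\infty)$.

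\emph{Step 2: regularity of the radial factor.} On $\widebar{K}_\alpha$ we have $\|V_{\matheuvm{v}}(z)\|=h'(d(z,x_1))\ge h'(\alpha)>0$ by \ref{assumption:h_strict_convexity}. By continuity, after shrinking $\mathcal{U}$ we get $\|V\|>h'(\alpha)/2$ on $\mathcal{U}$. There $V/\|V\|$ is $\mathcal{C}^1$. Also $(h')^{-1}$ is $\mathcal{C}^1$ on $[h'(\alpha)/2,\infty)$, because $h''>0$ on $(0,\infty)$ and the inverse function theorem applies away from $0$. So $W(\matheuvm{v},z):=-(h')^{-1}(\|V\|)V/\|V\|$ is a $\mathcal{C}^1$ vector field. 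Composing with the smooth map $(z,w)\mapsto\exp_z w$ on $TM$ yields a $\mathcal{C}^1$ map $G(\matheuvm{v},z)=F_{\matheuvm{v}}(z)$ on $\mathcal{U}$.

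\emph{Step 3: uniform Lipschitz bound.} This is the main obstacle, since Lipschitz continuity is needed on the possibly disconnected and non-convex set $Z$, not merely a bounded differential. I would first pick a compact neighborhood $\mathcal{C}\subset\mathcal{U}$ of the projection of $\widebar{K}_\alpha$, and let $\Lambda=\sup_{\mathcal{C}}\|\di_z G\|<\infty$. Choose $r>0$ such that $\{\matheuvm{v}\}\times \widebar{B}(z,r)\subset\mathcal{C}$ for every $(\matheuvm{v},z)$ in the projection, and such that $r$ is below the injectivity radius on the compact $z$-projection. For $z,z'\in Z$ with $d(z,z')<r$, integrating along the minimal geodesic inside the ball gives $d(F_{\matheuvm{v}}(z),F_{\matheuvm{v}}(z'))\le\Lambda\, d(z,z')$. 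For $d(z,z')\ge r$, both images lie in the compact projection of $\widebar{K}_\alpha$ onto the first factor, of diameter $D$, so the distance is at most $(D/r)\,d(z,z')$. Taking $L=\max(\Lambda,D/r)$ gives the conclusion, with $L$ depending only on $K$ and $\alpha$.
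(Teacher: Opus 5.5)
Your proposal is correct and follows essentially the paper's route. You build the joint map $\mathcal{G}(\matheuvm{v},y)=\exp_y W(\nabla_y f(\matheuvm{v},y))$ as a $\mathcal{C}^1$ map on a neighborhood of the compact projection of $\Omega_\alpha\cap K$ to $M^{n-1}\times M$, using cut-locus avoidance, the lower bound $\|V_{\matheuvm{v}}\|\ge h'(\alpha)$, and compactness to bound $\di_y\mathcal{G}$ uniformly in $\matheuvm{v}$. Your Step 3 is more careful than the paper, which passes directly from the bound on $\|\di_y\mathcal{G}\|$ over $\mathcal{D}$ to $L$-Lipschitz continuity on the possibly non-convex set $Z$; your local/global split with $L=\max(\Lambda,D/r)$ supplies exactly the missing justification.
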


\begin{proof}
	To uniformly bound the Lipschitz constant of $F_{\matheuvm{v}}$ independently of $\matheuvm{v}$, we treat the coordinates $\matheuvm{v} = (x_2, \ldots, x_n)$ and $z$ jointly. Let $\mathcal{D} \subset M^{n-1} \times M$ be the projection of the compact set $\Omega_\alpha \cap K$ onto the variables $(\matheuvm{v}, z)$.
	Since $K$ is compact, $\mathcal{D}$ is a pre-compact set.

	We define the joint function $f: M^{n-1} \times M \to \mathbb{R}$ and the map $W: TM \setminus \{0\} \to TM$ respectively by
	\[
		f(\matheuvm{v}, z) = -\frac{1}{\lambda_1} \sum_{i=2}^n \lambda_i \, h(d_{\matheuvm{g}}(z, x_i))
		\quad \text{and} \quad
		W(V) = -(h^{\prime})^{-1}(\|V\|) \frac{V}{\| V \|}.
	\]
	By \Cref{lem:cut_locus_avoidance}, for any point $(\matheuvm{v}, z) \in \mathcal{D}$, the barycenter $z$ is disjoint from the cut loci of all $x_i$.
	Hence, there exists a pre-compact open neighborhood $\mathcal{U} \subset M^{n-1} \times M$ of $\widebar {\mathcal{D}}$ such that for any $(\matheuvm{v}, y) \in \widebar{\mathcal{U}}$, $y \notin \operatorname{Cut}(x_i)$ and $d_{\matheuvm{g}}(y, x_i) > \alpha / 2$ for $2 \le i \le n$.
	As $h$ is $\mathcal{C}^2$-smooth on $(0, + \infty)$ and the intrinsic distance functions are smooth away from the cut locus, $f$ is a $\mathcal{C}^2$ function on $\mathcal{U}$. Thus, the partial gradient $\nabla_y f(\matheuvm{v}, y)$ and the partial Hessian $\operatorname{Hess}_y f(\matheuvm{v}, y)$ exist and are continuous on $\mathcal{U}$.

	According to \eqref{equa:F_v_explicit_expression}, we can jointly express the inverse map as $\mathcal{G}(\matheuvm{v}, y) := \exp_y \circ W (\nabla_y f(\matheuvm{v}, y))$.
	As shown in the proof of \Cref{lem:explicit_inverse_map}, for points arising from $\Omega_\alpha \cap K$, we have $\| \nabla_z f(\matheuvm{v}, z) \| = h^\prime(d_{\matheuvm{g}}(z, x_1))$.
	By the condition $d_{\matheuvm{g}}(z, x_1) > \alpha$ in the definition of $\Omega_\alpha$ and the strict convexity of $h$ \ref{assumption:h_strict_convexity}, we have $\| \nabla_z f(\matheuvm{v}, z) \| \ge h^\prime(\alpha) > 0$ on $\mathcal{D}$.
	By shrinking the neighborhood $\mathcal{U}$ if necessary, we can assume without loss of generality that $\| \nabla_y f(\matheuvm{v}, y) \| > 0$ holds for all $(\matheuvm{v}, y) \in \mathcal{U}$, ensuring that the composition $W(\nabla_y f(\matheuvm{v}, y))$ is well-defined.

	We recall that, even for the quadratic case $h(d) = d^2 / 2$, the partial differential $\di_y \mathcal{G}$ is related to $\operatorname{Hess}_y f$ via a relatively complicated formula \cite[Lemma 1.43]{ma2025thesis}.
	For simplicity, we now show that $\di_y \mathcal{G}$ exists and is continuous on $\mathcal{U}$ without deducing its explicit formula.
	Since $\operatorname{Hess}_y f$ is the covariant derivative of the partial differential
	$\di_y f$ (metric dual of $\nabla_y f$) \cite[Proposition 2.2.6]{petersen2016riemannian}, the tangent bundle perspective of this covariant derivative \cite[Proposition 4.1]{sakai1996riemannian} implies that $(\matheuvm{v}, y) \mapsto \nabla_y f(\matheuvm{v}, y)$ is a $\mathcal{C}^1$ map from $\mathcal{U}$ into $TM$.
	As the exponential map and $W$ are both smooth maps on their domains, the composition $\mathcal{G}: \mathcal{U} \rightarrow M$ is a $\mathcal{C}^1$ map.

	Since $\mathcal{G}$ is a $\mathcal{C}^1$ map, its partial differential $\di_y \mathcal{G}(\matheuvm{v}, y)$ is continuous on $\mathcal{U}$.
	As $\widebar {\mathcal{D}} \subset \mathcal{U}$ is compact, the operator norm $\|\di_y \mathcal{G}(\matheuvm{v}, z)\|$ admits a finite maximum value $L < \infty$ over all $(\matheuvm{v}, z) \in \mathcal{D}$. It follows that the sliced map $\mathcal{G}(\matheuvm{v}, \cdot) = F_{\matheuvm{v}}$ is an $L$-Lipschitz continuous map on its respective domain $Z$, with $L$ independent of $\matheuvm{v}$.

	The set $\Omega: = \Omega_\alpha \cap K$ is a Borel set as $\Omega_\alpha$ is a relatively
	open subset of the closed set $\Gamma$.
	To show that $Z$ is also a Borel set,
	we extend the definition domain of $F_{\matheuvm{v}}$ to
	an open neighborhood $U$ of $Z$
	such that $\{\matheuvm{v}\} \times U \subset \mathcal{U}$,
	by setting $F_{\matheuvm{v}}(y) := \mathcal{G}(\matheuvm{v}, y)$ for $y \in U$.
	Since $F_{\matheuvm{v}}: U \rightarrow M$ is $\mathcal{C}^1$,
	the map $y \mapsto (F_{\matheuvm{v}}(y), \matheuvm{v}, y)$
	defined on $U$ is measurable, and its pre-image of $\Omega$,
	being exactly the set $Z$, is thus a Borel set.
\end{proof}

While $\Omega_\alpha$ guarantees Lipschitz continuity, restricting ourselves to strictly collision-free regions requires all marginal measures to be absolutely continuous to handle the discarded collision sets.
To relax this requirement, we now determine if $F_{\matheuvm{v}}(z)$ remains Lipschitz even when $z$ collides with $x_i$ for $i \ge 2$.
Fix a point $x_i$, define $r(z) := d_{\matheuvm{g}}(z, x_i)$ and $f_i(z) := h(d_{\matheuvm{g}}(z, x_i))$.
The analytical hurdle lies in the Hessian of $f_i$ when $z \rightarrow x_i$.
Applying the chain rule, we obtain \cite[Exercise 2.5.6]{petersen2016riemannian}
\begin{equation}
	\label{eq:hessian_expansion}
	\operatorname{Hess}_z
	f_i(z) = h^{\prime\prime}(r) \diff r \otimes \diff r + \frac{h^\prime(r)}{r}
	\left( r\, \operatorname{Hess}_z r \right),
\end{equation}
where $\diff r $ denotes the differential of the function $r$ (i.e., $\di r$)
that is only well-defined for $z \neq x_i$.
Moreover, the tensor product $\diff r \otimes \diff r$
is usually written as symmetric product $\diff r \diff r$ or $\diff r^2$, while we keep current notation for clarity.
By definition,
for tangent vectors $X, Y \in T_zM$,
$\diff r \otimes \diff r (X, Y)$ is the product of scalars $X r $ and $Y r$.
Thanks to the classical result \cite[section 5.5.3]{petersen2016riemannian},
\begin{equation}
	\label{equa:expansion_hessian_r_at_0}
	\lim_{z \rightarrow x_i} \left(\operatorname{Hess}_z r(z) - \frac{\matheuvm{g} - \diff r \otimes \diff r}{r(z)} \right)
	= 0,
\end{equation}
we deduce the following regularity result.

\begin{lem}[Lipschitz continuity allowing marginal collisions]
	\label{lem:lipschitz_with_collision}
	Under the settings of \Cref{lem:explicit_inverse_map}, suppose the cost function satisfies the following local condition at the origin:
	\begin{equation}
		h^{\prime\prime}(0) := \lim_{t \downarrow 0} \frac{h^\prime(t)}{t} = \lim_{t \downarrow 0} h^{\prime\prime}(t).
	\end{equation}
	For any $\alpha > 0$, define the partially relaxed subset
	\begin{equation}
		\Omega_{1, \alpha} := \left\{ (\matheuvm{x}, z) \in \Gamma \;\middle|\; d_{\matheuvm{g}}(z, x_1) > \alpha \right\}.
	\end{equation}
	Then, for any compact subset $K \subset \Gamma$, there exists a constant $L > 0$ such that Condition $2$ of \Cref{prop:absolute_continuity_via_approximation} is satisfied by $\Omega_{1, \alpha} \cap K$.
\end{lem}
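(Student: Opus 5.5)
The plan follows the proof of \Cref{lem:lipschitz_no_collision}. The only change is that the terms $f_i(y) = h(d_{\matheuvm{g}}(y, x_i))$ for $i \ge 2$ may now have $y = x_i$. So the first task is to show that these terms are still $\mathcal{C}^2$ near collisions. After that the rest of the argument carries over: compactness, continuity of the partial differential of the joint map $\mathcal{G}(\matheuvm{v}, y) := \exp_y \circ W(\nabla_y f(\matheuvm{v}, y))$, and a uniform bound on $\|\di_y \mathcal{G}\|$.

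\textbf{Step 1: $\mathcal{C}^2$ regularity of $(x, y) \mapsto h(d_{\matheuvm{g}}(y, x))$ across the diagonal.}
Off the diagonal and off the cut locus this holds as before. Near a diagonal point $(x_i, x_i)$ we have $y \notin \operatorname{Cut}(x_i)$. Write $r = d_{\matheuvm{g}}(y, x_i)$ and set $\phi(s) := h(\sqrt{2s})$, so that $f_i = \phi(r^2/2)$. Note that $r^2/2$ is smooth jointly in $(x_i, y)$ near the diagonal, and $\phi'(s) = h'(r)/r$.

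\begin{itemize}
\item \emph{First derivatives.} We get $\nabla_y f_i = (h'(r)/r)\, r \nabla_y r = -(h'(r)/r)\exp_y^{-1}(x_i)$. Since $h'(r)/r \to h''(0)$, this vector field extends continuously by $0$ at $y = x_i$.
\item \emph{Second derivatives.} Insert \eqref{equa:expansion_hessian_r_at_0} into \eqref{eq:hessian_expansion}. This gives
\[
\operatorname{Hess}_y f_i = \Big(h''(r) - \tfrac{h'(r)}{r}\Big)\diff r \otimes \diff r + \tfrac{h'(r)}{r}\,\matheuvm{g} + \tfrac{h'(r)}{r}\, o(1).
\]
The first coefficient tends to $0$ by the hypothesis $h''(0) = \lim h'(t)/t = \lim h''(t)$, and $\diff r \otimes \diff r$ is bounded. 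Hence $\operatorname{Hess}_y f_i \to h''(0)\,\matheuvm{g}$.
\end{itemize}

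The estimate \eqref{equa:expansion_hessian_r_at_0} is stated for fixed $x_i$, but we need it jointly in $(x_i, y)$. To get this, I would note that $r\operatorname{Hess}_y r - (\matheuvm{g} - \diff r \otimes \diff r) = \operatorname{Hess}_y(r^2/2) - \matheuvm{g}$. This quantity is jointly smooth in $(x_i, y)$ and vanishes on the diagonal. So it is $O(r)$ uniformly on compacta, which gives joint continuity of $\operatorname{Hess}_y f_i$ as $(x_i, y) \to (x, x)$.

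A standard argument then shows that $(\matheuvm{v}, y) \mapsto \nabla_y f(\matheuvm{v}, y)$ is a $\mathcal{C}^1$ map into $TM$. One way is to verify the mixed derivatives similarly: in normal coordinates $\exp_y^{-1}(x_i)$ is smooth and $h'(r)/r$ is $\mathcal{C}^1$ in $s = r^2/2$ off the diagonal with bounded derivative. A cheaper alternative is to only use continuity of $\operatorname{Hess}_y$ uniformly in $\matheuvm{v}$. This suffices for a Lipschitz bound on each slice, since $\matheuvm{v}$ is frozen there. I expect this step to be the main obstacle: making the diagonal limit uniform and handling the collision points where $\diff r$ is undefined.

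\textbf{Step 2: avoiding the singularity of $W$.}
On $\Omega_{1,\alpha} \cap K$ we have $\|\nabla_z f\| = h'(d_{\matheuvm{g}}(z, x_1)) \ge h'(\alpha) > 0$, exactly as in \Cref{lem:lipschitz_no_collision}. As there, there is a precompact open neighborhood $\mathcal{U}$ of the closure $\widebar{\mathcal{D}}$ of the projected set $\mathcal{D}$ on which three things hold: $y \notin \operatorname{Cut}(x_i)$ for all $i \ge 2$ (by \Cref{lem:cut_locus_avoidance} and closedness of the cut-locus set), $\|\nabla_y f\| > 0$, and each $f_i$ is $\mathcal{C}^2$ by Step 1.

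\textbf{Step 3: conclusion.}
The map $W$ is smooth on $TM \setminus \{0\}$ and $\exp$ is smooth, so $\mathcal{G}$ has a partial differential $\di_y\mathcal{G}$ that is continuous on $\mathcal{U}$. Compactness of $\widebar{\mathcal{D}}$ then yields a uniform bound $L$, independent of $\matheuvm{v}$. To turn this into an $L$-Lipschitz bound for $F_{\matheuvm{v}}$ on $Z$, I would argue locally using geodesically convex neighborhoods, as in the earlier lemma. Lipschitz maps on such sets extend, which is all \Cref{prop:absolute_continuity_via_approximation} needs via the volume bound.

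Finally, \Cref{lem:explicit_inverse_map} identifies $F_{\matheuvm{v}} = \mathcal{G}(\matheuvm{v}, \cdot)$ on the slice, since $z \ne x_1$ there. Borel measurability of $\Omega_{1,\alpha} \cap K$ and of $Z$ follows verbatim from the last paragraph of the proof of \Cref{lem:lipschitz_no_collision}.
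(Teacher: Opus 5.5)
Your proposal is correct and essentially follows the paper's proof: the same splitting of $\operatorname{Hess}_y f_i$ via \eqref{eq:hessian_expansion} and \eqref{equa:expansion_hessian_r_at_0}, the flatness hypothesis killing the coefficient of $\diff r \otimes \diff r$, and the composition-plus-compactness argument of \Cref{lem:lipschitz_no_collision}; your uniform-in-$x_i$ bound $\operatorname{Hess}_y(r^2/2) - \matheuvm{g} = O(r)$ and your remark that joint continuity of $\operatorname{Hess}_y f$ already bounds $\di_y \mathcal{G}$ usefully tighten steps the paper leaves implicit. One small slip: $\frac{\diff}{\diff s}\bigl(h'(r)/r\bigr) = \bigl(h''(r) - h'(r)/r\bigr)/r^2$ need not be bounded under the hypothesis (take $h(t) = t^2/2 + t^{5/2}$), so your mixed-derivative justification is wrong as phrased; this is harmless, because that factor always comes multiplied by $r^2$ and your cheaper route does not need it.
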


\begin{proof}
	Let $\mathcal{Q} \subset M^{n-1} \times M$ be the projection of $\Omega_{1, \alpha} \cap K$ onto the variables $(\matheuvm{v}, z)$.
	The set $\mathcal{Q}$ is pre-compact as $K$ is compact.
	Following the framework established in the proof of \Cref{lem:lipschitz_no_collision}, we define the joint function $f(\matheuvm{v}, y) = -\frac{1}{\lambda_1} \sum_{i=2}^n \lambda_i\,h(d_{\matheuvm{g}}(y, x_i))$.
	By \Cref{lem:cut_locus_avoidance}, no barycenter configuration involves the cut locus.
	Thus, there exists a pre-compact open neighborhood $\mathcal{U} \subset M^{n-1} \times M$ of the compact closure $\widebar{\mathcal{Q}}$ such that $y \notin \operatorname{Cut}(x_i)$
	for $2 \le i \le n$ and any $(\matheuvm{v}, y) \in \widebar{\mathcal{U}}$.

	For any point $(\matheuvm{v}, z) \in \mathcal{Q}$, the first-order optimality condition and the bound $d_{\matheuvm{g}}(z, x_1) > \alpha$ guarantee that $\| \nabla_z f(\matheuvm{v}, z) \| = h^\prime(d_{\matheuvm{g}}(z, x_1)) \ge h^\prime(\alpha) > 0$.
	By the continuity of the gradient, we can shrink the neighborhood $\mathcal{U}$ if necessary to ensure that the strict lower bound $\| \nabla_y f(\matheuvm{v}, y) \| > h^\prime(\alpha)/2 > 0$ holds uniformly for all $(\matheuvm{v}, y) \in \mathcal{U}$.
	By the exact same composition argument used in \Cref{lem:lipschitz_no_collision}, to establish the uniform Lipschitz continuity of $F_{\matheuvm{v}}$, it suffices to show that the partial gradient map $\nabla_y f: \mathcal{U} \rightarrow TM$ is a $\mathcal{C}^1$ map between manifolds.
	Thanks to the tangent bundle perspective of the covariant derivative \cite[Proposition 4.1]{sakai1996riemannian}, this reduces to proving that the Riemannian Hessian of each term $f_i(y) := h(d_{\matheuvm{g}}(y, x_i))$ is well-defined and continuous everywhere on $\mathcal{U}$, including at the collision $y = x_i$.

	Fix an index $i \ge 2$. Since $h^\prime(0) = 0$, the gradient $\nabla_y f_i(y) = h^\prime(d_{\matheuvm{g}}(y, x_i)) \nabla_y d_{\matheuvm{g}}(y, x_i)$ vanishes continuously as $y \rightarrow x_i$, meaning $x_i$ is a critical point of $f_i$.
	At a critical point, the Riemannian Hessian can be evaluated as the standard second derivative \cite[section 2.4]{petersen2016riemannian} along any unit-speed geodesic $\gamma(t)$ starting at $\gamma(0) = x_i$.
	Since $d_{\matheuvm{g}}(\gamma(t), x_i) = t$ for small $t > 0$, we have $f_i(\gamma(t)) = h(t)$.
	The second derivative at $t=0$ yields $\operatorname{Hess}_{y} f_i|_{y = x_i}(\dot{\gamma}(0), \dot{\gamma}(0)) = h^{\prime\prime}(0)$.
	Since this holds for all directions, we explicitly obtain $\operatorname{Hess}_{y} f_i|_{y = x_i} = h^{\prime\prime}(0) \matheuvm{g}$.

	It remains to show that the Hessian is continuous as $y \rightarrow x_i$. Let $r(y) := d_{\matheuvm{g}}(y, x_i)$.
	For $y \neq x_i$, the chain rule \eqref{eq:hessian_expansion} isolates the non-convergent tensor $\diff r \otimes \diff r$ ($\diff r$ is not well-defined at $y= x_i$):
	\[
		\operatorname{Hess}_y f_i = \frac{h^\prime(r)}{r} \matheuvm{g} + \left( h^{\prime\prime}(r) - \frac{h^\prime(r)}{r} \right) \diff r \otimes \diff r + \frac{h^\prime(r)}{r} \left( r\, \operatorname{Hess}_y r - (\matheuvm{g} - \diff r \otimes \diff r) \right).
	\]
	We evaluate the limit as $y \rightarrow x_i$ (i.e., $r \downarrow 0$).
	The third term vanishes due to the geometric asymptotic expansion \eqref{equa:expansion_hessian_r_at_0}.
	By our local origin-flatness assumption, both $h^{\prime\prime}(r)$ and $\frac{h^\prime(r)}{r}$ converge to the same value $h^{\prime\prime}(0)$, implying that the scalar coefficient $\left( h^{\prime\prime}(r) - \frac{h^\prime(r)}{r} \right)$ tends to $0$.
	As the tensor $\diff r \otimes \diff r$ has a bounded unit norm, their product strictly vanishes.
	This yields
	\[
		\lim_{y \rightarrow x_i} \operatorname{Hess}_y f_i = h^{\prime\prime}(0) \matheuvm{g} + 0 + 0 = h^{\prime\prime}(0) \matheuvm{g},
	\]
	which precisely matches the value at $y = x_i$.

	This continuous extension confirms that $\operatorname{Hess}_y f$ is continuous on the entirety of $\mathcal{U}$.
	Therefore, $\nabla_y f$ is indeed a $\mathcal{C}^1$ map on $\mathcal{U}$.
	As established in \Cref{lem:lipschitz_no_collision}, the joint inverse map $\mathcal{G}(\matheuvm{v}, y)$ is thus $\mathcal{C}^1$, and the operator norm of its continuous partial differential $\di_y \mathcal{G}$ achieves a finite maximum $L$ on the compact closure $\widebar{\mathcal{Q}}$.
	It follows that the sliced map $F_{\matheuvm{v}}$ is an $L$-Lipschitz continuous map on its respective domain, yielding the required uniform Lipschitz bound.
\end{proof}

\subsection{Decomposition of support and proof of the main theorem}
\label{sec:recursive_reduction}

We are now in a position to synthesize the decomposition strategy and the discrete approximation framework to prove the absolute continuity of the generalized Wasserstein barycenter.
We first establish the result under the assumption that all marginal measures are compactly supported.

\begin{thm}[Absolute continuity for compactly supported measures]
	\label{thm:absolute_continuity_compact}
	Let $(M, \matheuvm{g})$ be a complete Riemannian manifold.
	Suppose that $h:[0, \infty) \to[0, \infty)$ satisfies assumptions \ref{assumption:h_at_zero}-\ref{assumption:h_strict_convexity}.
	Given an integer $n \ge 2$,
	let $\mu_1, \ldots, \mu_n \in \mathcal{P}(M)$ be $n$ compactly supported probability measures
	with positive weights $\lambda_i > 0$ satisfying $\sum_{i=1}^n \lambda_i = 1$.
	If $\mu_1$ is absolutely continuous with respect to $\operatorname{Vol}$,
	then the unique generalized Wasserstein barycenter $\mu_\mathbb{P}$ of $\mathbb{P} := \sum_{i=1}^n \lambda_i \, \delta_{\mu_i}$ is also absolutely continuous in one of the following two different cases:
	\begin{enumerate}
		\item $h$ is $\mathcal{C}^2$-smooth on $[0, +\infty)$, i.e., $h^{\prime\prime}(0) := \lim_{t \downarrow 0} \frac{h^\prime(t)}{t} = \lim_{t \downarrow 0} h^{\prime\prime}(t)$;
		\item measures $\mu_2, \ldots, \mu_n$ are absolutely continuous.
	\end{enumerate}
\end{thm}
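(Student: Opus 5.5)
The plan is to apply \Cref{prop:absolute_continuity_via_approximation} with a compact set $K$ and a carefully chosen Borel set $\Omega$. Take $K := \Gamma \cap [\operatorname{supp}(\mu_1) \times \cdots \times \operatorname{supp}(\mu_n) \times \operatorname{bary}(\operatorname{supp}(\mu_1)\times\cdots\times\operatorname{supp}(\mu_n))]$, which is compact by Statement 4 of \Cref{lem:barycenter_properties}. Since $\gamma$ is concentrated on $\prod_i \operatorname{supp}(\mu_i)$ and $B$ selects barycenters, $\widetilde\gamma := (\operatorname{Id}_{M^n}, B)_\#\gamma$ is concentrated on $K$. The main difficulty is that Condition 1 requires the \emph{closure} of $\Gamma\setminus\Omega$ to be $\widetilde\gamma$-null. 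For this reason I would not take $\Omega$ to be an open-type set $\Omega_{1,\alpha}\cap K$ for a fixed $\alpha$; instead I would run the approximation argument for a family of $\alpha$'s and let $\alpha\downarrow 0$.

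Concretely, fix $\alpha>0$ and set $\Omega := (\Omega_{1,\alpha}\cap K)\cup(\Gamma\setminus K')$, where $K'$ is a compact neighborhood of $K$ in $\Gamma$. This does not work directly, because Condition 2 fails on $\Gamma\setminus K'$. A cleaner route is to avoid Condition 1 in its sharp form and reuse the \emph{proof} of \Cref{prop:absolute_continuity_via_approximation}. There, the closed set $\matheuvm K$ only enters through $\limsup_j \widetilde\gamma^j(\matheuvm K) \le \widetilde\gamma(\matheuvm K)$ (Portmanteau). I would therefore set $\matheuvm K_\alpha := \Gamma\cap\{d_{\matheuvm g}(z,x_1)\le\alpha\}$ in case 1, and $\matheuvm K_\alpha := \Gamma\cap\{\min_i d_{\matheuvm g}(z,x_i)\le\alpha\}$ in case 2, together with the closed set $\Gamma\setminus \mathring K'$. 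All supports of $\widetilde\gamma^j$ lie in $K$, because $\operatorname{supp}(\mu_i^j)\subset\operatorname{supp}(\mu_i)$, so the second piece carries no mass for any $j$. Repeating the estimate gives, for $\mu_1\in\mathcal E_{\epsilon,\delta}$, the bound $\mu_{\mathbb P}(N)\le \epsilon + \widetilde\gamma(\matheuvm K_\alpha)$ whenever $\operatorname{Vol}(N)<\delta/L_\alpha^m$. Here $L_\alpha$ comes from \Cref{lem:lipschitz_with_collision} in case 1 and from \Cref{lem:lipschitz_no_collision} in case 2, applied to $K'$. Letting $\alpha\downarrow0$, the sets $\matheuvm K_\alpha$ decrease to $\matheuvm K_0 = \Gamma\cap\{z=x_1\}$ (resp.\ $\{z=x_i$ for some $i\}$), so $\widetilde\gamma(\matheuvm K_\alpha)\to\widetilde\gamma(\matheuvm K_0)$. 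It then suffices to show $\widetilde\gamma(\matheuvm K_0)=0$ and to choose $\alpha$ with $\widetilde\gamma(\matheuvm K_\alpha)<\epsilon$. This yields $\mu_{\mathbb P}\in\mathcal E_{2\epsilon,\delta'}$ for every $\epsilon$ with suitable $\delta'$, hence absolute continuity by \cite[Lemma 2.12]{ma2025absolute}.

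The remaining step is the collision mass. On $\{z=x_i\}$ we have $B(\matheuvm x)=x_i$ for $\gamma$-a.e.\ such $\matheuvm x$. Thus for any Borel $N$, $\mu_{\mathbb P}(N)\le \mu_{\mathbb P}(N\cap\{\text{no collision}\}) + \sum_i \mu_i(N)$, and the restriction of $\mu_{\mathbb P}$ to collision configurations with $x_i$ is dominated by $\mu_i$. In case 2, where every $\mu_i$ is absolutely continuous, this part of $\mu_{\mathbb P}$ is absolutely continuous. So instead of proving $\widetilde\gamma(\matheuvm K_0)=0$, I would split $\mu_{\mathbb P} = B_\#(\gamma|_{\{B\ne x_i\ \forall i\}}) + B_\#(\gamma|_{\text{collisions}})$. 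The second term is $\ll \sum_i\mu_i$, and the first is handled by the $\alpha\downarrow0$ argument above, with $\widetilde\gamma(\matheuvm K_\alpha\setminus \matheuvm K_0)\to0$. In case 1 only collisions with $x_1$ are excluded, and these give a measure $\ll\mu_1$, which is absolutely continuous by hypothesis. The main obstacle I anticipate is carrying this splitting through the discrete approximation: the decomposition must be performed at the level of the limit $\widetilde\gamma$, while the Lipschitz estimate is used at level $j$. I would handle this by using the open sets $\{d(z,x_1)>\alpha\}$ (resp.\ $\{\min_i d(z,x_i)>\alpha\}$) in the liminf, via the Portmanteau theorem applied to the restricted open set $\matheuvm O\cap(M^n\times U)$. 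I would also check carefully that the Lipschitz bounds from \Cref{lem:lipschitz_no_collision} and \Cref{lem:lipschitz_with_collision} are uniform on $K'$, since the approximating supports lie there.
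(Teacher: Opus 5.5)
Your final argument is correct, but it is organized differently from the paper's.

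\textbf{The paper's route.} The collision sets $C_i$ are handled exactly as in your splitting, via $\widetilde\gamma(C_i\cap(M^n\times N))\le\mu_i(N)$. Then, for each $k$, the paper restricts $\widetilde\gamma$ to $\mathcal H_k=\{d_{\matheuvm g}(z,x_1)>1/k\}\cap\operatorname{supp}(\widetilde\gamma)$ (resp.\ $\mathcal H_k'$) and renormalizes. It invokes the restriction property of optimal plans to regard $\gamma^{(k)}$ as the optimal plan of a new problem $\mathbb P^{(k)}$, with absolutely continuous first marginal and unique barycenter $B_\#\gamma^{(k)}$. It then applies \Cref{prop:absolute_continuity_via_approximation} as a black box with $\Omega=\mathcal H_k$.

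\textbf{Your route.} You keep the original problem and reopen the proposition's proof. Condition~1 is replaced by lower semicontinuity of $\widetilde\gamma^j\rightharpoonup\widetilde\gamma$ on the open set $G_\alpha\cap(M^n\times U)$, where $G_\alpha=\{d_{\matheuvm g}(z,x_1)>\alpha\}$ (resp.\ $\{\min_i d_{\matheuvm g}(z,x_i)>\alpha\}$). Every $\operatorname{supp}(\widetilde\gamma^j)$ lies in your compact $K$, so $\operatorname{supp}(\widetilde\gamma^j)\cap G_\alpha\subset\Omega_{1,\alpha}\cap K$. Hence the per-$j$ Lipschitz estimate goes through verbatim.

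\textbf{What this buys you.} You no longer need the restriction property or uniqueness for the auxiliary problems $\mathbb P^{(k)}$. You also avoid the most delicate step of the paper's route. $\mathcal H_k$ is relatively open in $\operatorname{supp}(\widetilde\gamma)$ but not in $\Gamma$, so $\overline{\Gamma\setminus\mathcal H_k}$ can contain the whole support. For instance, in $\mathbb R^m$ with $\mu_2=\delta_0$, points of $\Gamma$ with $x_2\neq 0$ accumulate on all of $\operatorname{supp}(\widetilde\gamma)$. So Condition~1 really needs an $\Omega$ such as $\Gamma\cap G_{1/k}\cap\mathcal V$, with $\mathcal V$ a bounded open neighbourhood of the relevant compact set.

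\textbf{Two comments on your write-up.}
\begin{enumerate}
\item The intermediate reduction ``it suffices to show $\widetilde\gamma(\matheuvm K_0)=0$'' is false in general. For $n=2$ and $\mu_1=\mu_2$, every configuration collides, so $\widetilde\gamma(\matheuvm K_0)=1$. The splitting you introduce afterwards is therefore essential, not optional.
\item Once you use the open-set bound, the closed sets $\matheuvm K_\alpha$ are superfluous. For $\operatorname{Vol}(N)=0$ you get $\widetilde\gamma(G_\alpha\cap(M^n\times N))\le\epsilon$ for every $\epsilon>0$, hence $=0$ for each fixed $\alpha$. A countable union over $\alpha=1/k$, plus the collision bound, finishes the proof.
\end{enumerate}
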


\begin{proof}
	The uniqueness of $\mu_\mathbb{P}$ follows from \Cref{thm:barycenter_uniqueness}.
	Let $\gamma \in \Pi(\mu_1, \ldots, \mu_n)$ be the unique multi-marginal optimal transport plan
	(\Cref{coro:multi_marginal_plan_structure})
	and let $B: M^n \rightarrow M$ be a measurable barycenter selection map.
	By \Cref{prop:barycenter_construction}, the barycenter is given by the push-forward $\mu_\mathbb{P} = B_{\#} \gamma$.

	Fix an arbitrarily chosen Borel set $N \in \mathcal{B}(M)$ such that $\operatorname{Vol}(N) = 0$.
	We shall show that $\mu_\mathbb{P}(N) = 0$ in both cases by partitioning the support of $\gamma$ into a countable collection of Borel sets based on the collision of the barycenter with the marginal points.

	Define $\widetilde{\gamma} := (\operatorname{Id}_{M^n}, B)_{\#} \gamma \in \mathcal{P}(M^{n+1})$.
	Since all measures $\mu_1, \ldots, \mu_n$ have compact support,
	\Cref{lem:barycenter_properties} implies that
	$\operatorname{supp}(\widetilde \gamma)$ is compact.
	As $\mu_\mathbb{P} = B_{\#} \gamma$, $\mu_\mathbb{P}(N) = \widetilde{\gamma}(M^n \times N)$.
	For each $1 \le i \le n$, define the collision set $C_i \subset M^{n+1}$ by
	\[
		C_i := \{ (\matheuvm{x}, z) \in M^{n+1} \mid z = x_i \}.
	\]
	The equality $z = x_i$ implies that,
	the mass of $M^n \times N$ restricted to each collision set is bounded by the mass of the corresponding marginal on $N$:
	\begin{equation}
		\label{eq:collision_bound}
		\widetilde{\gamma}(C_i \cap (M^n \times N)) \le \widetilde{\gamma}(\{ (\matheuvm{x}, z) \in M^{n+1} \mid x_i \in N \}) = \mu_i(N).
	\end{equation}
	Since $\mu_1$ is absolutely continuous with respect to the volume measure, $\mu_1(N) = 0$, which immediately implies $\widetilde{\gamma}(C_1 \cap (M^n \times N)) = 0$.

	Consider the \textbf{Case 1:} $h^{\prime\prime}(0) := \lim_{t \downarrow 0} \frac{h^\prime(t)}{t} = \lim_{t \downarrow 0} h^{\prime\prime}(t)$.
	We exhaust the regular region strictly away from the first collision set:
	\[
		\operatorname{supp}(\widetilde \gamma) \subset C_1 \cup \bigcup_{k=1}^\infty \mathcal{H}_k,
		\quad \text{ with }
		\mathcal{H}_k := \left\{ (\matheuvm{x}, z) \in \operatorname{supp}(\widetilde \gamma) \;\middle|\; d_{\matheuvm{g}}(z, x_1) > \frac{1}{k} \right\}.
	\]
	If $\widetilde{\gamma}(\mathcal{H}_k) = 0$,
	then trivially $\widetilde{\gamma}(\mathcal{H}_k \cap [M^n \times N]) = 0$.
	Otherwise, denote by
	$\widetilde \gamma^{(k)}$ the normalized probability measure of $\widetilde \gamma |_{\mathcal{H}_k}$,
	by $\gamma^{(k)}$ the push-forward of $\widetilde \gamma^{(k)}$
	via the projection $p_1: M^n \times M \rightarrow M^n$,
	and by $\mu_1^{(k)}, \ldots, \mu_n^{(k)}$ the $n$ marginals
	of $\gamma^{(k)}$ in this order.
	Define $\mathbb{P}^{(k)} := \sum_{i=1}^n \lambda_i \, \delta_{\mu_i^{(k)}}$.
	Note that $\gamma^{(k)}$ is the normalized restrictions of $\gamma$
	to the Borel set
	$(\operatorname{Id}_{M^n}, B)^{-1}(\mathcal{H}_k)$.
	It follows that $\gamma^{(k)}$ is a multi-marginal optimal transport plan thanks to the restriction property (c.f.\@ \cite[Theorem 5.19]{villani2009optimal}),
	and its first marginal measure
	$\mu_1^{(k)}$ is absolutely continuous.
	Then $\mu_{\mathbb{P}^{(k)}}: = B_{\#} \gamma^{(k)}$ is the unique
	barycenter of $\mathbb{P}^{(k)}$ and also the last marginal of $\widetilde {\gamma}^{(k)}$,
	which implies
	\begin{equation}
		\label{equa:reduced_barycenter_problem}
		\widetilde{\gamma}(\mathcal{H}_k \cap [M^n \times N]) = \widetilde{\gamma}(\mathcal{H}_k) \, \mu_{\mathbb{P}^{(k)}}(N).
	\end{equation}

	We now apply \Cref{prop:absolute_continuity_via_approximation} to the barycenter problem associated with $\mathbb{P}^{(k)}$ and the set $\Omega = \mathcal{H}_k$.
	Since $\mathcal{H}_k$ is a relatively open subset of $\operatorname{supp}(\widetilde \gamma)$
	(and thus of $\Gamma$),
	$\Gamma \setminus \mathcal{H}_k$ coincides with its closure,
	which implies the Condition 1 as $\widetilde {\gamma}^{(k)}$ assigns full mass to $\mathcal{H}_k$.
	Moreover,
	\Cref{lem:lipschitz_with_collision} applied to the compact set $K = \operatorname{supp}(\widetilde \gamma)$
	guarantees that Condition 2 is satisfied with a uniform Lipschitz constant $L_k$.
	Therefore, $\mu_{\mathbb{P}^{(k)}}$ is absolutely continuous
	and $\widetilde{\gamma}(\mathcal{H}_k \cap [M^n \times N]) = 0$ by \eqref{equa:reduced_barycenter_problem},
	which concludes the proof.

	Consider the \textbf{Case 2:} measures $\mu_2, \ldots, \mu_n$ are absolutely continuous.
	By \eqref{eq:collision_bound}, we have $\widetilde{\gamma}(C_i \cap (M^n \times N)) \le \mu_i(N) = 0$ for all $1 \le i \le n$.
	We exhaust the remaining region as follows:
	\[
		\operatorname{supp}(\widetilde \gamma) \subset \bigcup_{i=1}^n C_i \cup \bigcup_{k=1}^\infty \mathcal{H}_k^\prime,
		\quad \text{ with }
		\mathcal{H}_k^\prime := \left\{ (\matheuvm{x}, z) \in \operatorname{supp}(\widetilde \gamma) \;\middle|\; d_{\matheuvm{g}}(z, x_i) > \frac{1}{k}
		\text{ for } 1 \le i \le n \right\}.
	\]
	We repeat the identical restriction argument with $\Omega = \mathcal{H}_k^\prime$
	when $\widetilde \gamma(\mathcal{H}_k^\prime) \neq 0$.
	\Cref{lem:lipschitz_no_collision} ensures that Condition 2 of \Cref{prop:absolute_continuity_via_approximation} is satisfied.
	Hence, the barycenter of the restricted plan is absolutely continuous,
	implying $\widetilde{\gamma}(\mathcal{H}_k^\prime \cap [M^n \times N]) = 0$
	and thus the proof.
\end{proof}

We now state the main theorem in its full generality, removing the compactness assumption.
This is achieved via a decomposition of multi-marginal optimal transport plans.

\begin{coro}[Absolute continuity of generalized Wasserstein barycenters]
	\label{thm:main_theorem_general}
	Let $(M, \matheuvm{g})$ be a complete Riemannian manifold.
	Suppose that $h:[0, \infty) \to[0, \infty)$ satisfies assumptions \ref{assumption:h_at_zero}-\ref{assumption:h_strict_convexity}.
	Given an integer $n \ge 2$,
	let $\mu_1, \ldots, \mu_n \in \mathcal{W}_h(M)$ be $n$ probability measures
	with positive weights $\lambda_i > 0$ satisfying $\sum_{i=1}^n \lambda_i = 1$.
	If $\mu_1$ is absolutely continuous with respect to the volume measure $\operatorname{Vol}$,
	then the unique generalized Wasserstein barycenter $\mu_\mathbb{P}$ of $\mathbb{P} := \sum_{i=1}^n \lambda_i \, \delta_{\mu_i}$ is also absolutely continuous provided that one of the following two conditions holds:
	\begin{enumerate}
		\item $h^{\prime\prime}(0) := \lim_{t \downarrow 0} \frac{h^\prime(t)}{t} = \lim_{t \downarrow 0} h^{\prime\prime}(t)$;
		\item measures $\mu_2, \ldots, \mu_n$ are absolutely continuous.
	\end{enumerate}
\end{coro}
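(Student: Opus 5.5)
We reduce to \Cref{thm:absolute_continuity_compact} by decomposing the multi-marginal plan into compactly supported pieces. Uniqueness of $\mu_\mathbb{P}$ is given by \Cref{thm:barycenter_uniqueness}. By \Cref{prop:barycenter_construction}, $\mu_\mathbb{P} = B_{\#}\gamma$ for a multi-marginal optimal plan $\gamma \in \Pi(\mu_1, \ldots, \mu_n)$. Since all $\mu_i \in \mathcal{W}_h(M)$ and $C \le \sum_i \lambda_i h(d_{\matheuvm g}(x_0, x_i))$, the problem \eqref{eq:mmot_problem} has finite value, so such a $\gamma$ exists.

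Fix a reference point $x_0$ and exhaust $M^n$ by the compact sets $K_R := \bar B(x_0, R)^n$ with $R \in \mathbb{N}$. If $\gamma(K_R) > 0$, let $\gamma^{(R)}$ be the normalized restriction of $\gamma$ to $K_R$, and let $\mu_i^{(R)}$ be its marginals. By the restriction property of optimal plans (\cite[Theorem 5.19]{villani2009optimal}), $\gamma^{(R)}$ is optimal for \eqref{eq:mmot_problem} among plans with these marginals. This uses only $c$-cyclical monotonicity of $\operatorname{supp}\gamma$, which is inherited by subsets. The marginals $\mu_i^{(R)}$ are compactly supported and satisfy $\mu_i^{(R)} \le \mu_i/\gamma(K_R)$. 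Hence $\mu_1^{(R)}$ is absolutely continuous, and in Case 2 all of $\mu_2^{(R)}, \ldots, \mu_n^{(R)}$ are as well. Case 1 is a condition on $h$ only, so it persists unchanged.

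By \Cref{prop:barycenter_construction}, $B_{\#}\gamma^{(R)}$ is a barycenter of $\mathbb{P}^{(R)} := \sum_i \lambda_i \delta_{\mu_i^{(R)}}$. It is the unique one by \Cref{thm:barycenter_uniqueness}, so \Cref{thm:absolute_continuity_compact} applies and gives that $B_{\#}\gamma^{(R)}$ is absolutely continuous. Now take a Borel set $N$ with $\operatorname{Vol}(N) = 0$. Then
\[
\mu_\mathbb{P}(N) = \gamma(B^{-1}(N)) = \lim_{R\to\infty} \gamma\big(B^{-1}(N) \cap K_R\big) = \lim_{R\to\infty} \gamma(K_R)\, B_{\#}\gamma^{(R)}(N) = 0
\]
by monotone convergence, since $\bigcup_R K_R = M^n$.

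The main point to check is that \Cref{thm:absolute_continuity_compact} genuinely applies to the restricted problem. Its proof uses the multi-marginal plan and the selection $B$ of the restricted marginals. It also invokes \Cref{coro:multi_marginal_plan_structure}, which needs uniqueness of the optimal plan for compactly supported marginals with $\mu_1^{(R)}$ absolutely continuous, and this holds. So we may take the restricted plan to be exactly $\gamma^{(R)}$, and no stability argument is needed. Measurability causes no trouble, since $K_R$ is closed and $B$ is Borel.
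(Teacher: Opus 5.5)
Your proposal is correct and follows essentially the paper's route. You restrict the optimal plan $\gamma$ to compact pieces, use the restriction property to obtain optimal plans whose marginals are compactly supported and still absolutely continuous where needed, apply \Cref{thm:absolute_continuity_compact} to each piece, and reassemble; the only difference is cosmetic, in that you use nested products of closed balls with continuity from below, whereas the paper uses annuli with $\gamma$-null boundary spheres to write $\gamma$ (and hence $\mu_\mathbb{P}$) as a countable convex combination, so your version avoids having to choose those radii.
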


\begin{proof}
	By \Cref{prop:barycenter_construction}, the unique barycenter is given by the push-forward $\mu_\mathbb{P} = B_{\#} \gamma$, where $\gamma \in \Pi(\mu_1, \ldots, \mu_n)$ is a multi-marginal optimal transport plan
	and $B: M^n \rightarrow M$ is a measurable barycenter selection map.

	To decompose $\gamma$ into compactly supported measures,
	we explicitly construct a sequence of compact annuli with $\gamma$-negligible boundaries
	(cf.\@ \cite[Theorem 3.6]{ma2025thesis}).
	Fix a point $\matheuvm{x}_0 \in M^n$.
	Since the sum of uncountably many strictly positive real numbers is always infinite,
	there exists an unbounded and strictly increasing sequence $0= \alpha_0 < \alpha_1 < \alpha_2 < \cdots$
	such that $\gamma$ assigns zero mass to metric spheres
	centered at $\matheuvm{x}_0$ with radius $\alpha_i$ for all $i \ge 1$.
	Using annuli $K_j$ centered at $\matheuvm{x}_0$ with radii
	chosen from consecutive pairs of $\{\alpha_i\}_{i \ge 0}$,
	we can rewrite $\gamma$ as a weighted sum of at most countably many probability measures,
	\[
		\gamma = \gamma(K_1) \gamma_1 + \cdots + \gamma(K_j) \gamma_j + \cdots,
		\quad \text{ with } \gamma(K_j) > 0 \text{ and } \gamma_j = \frac{1}{\gamma(K_j)} \gamma |_{K_j}.
	\]
	Each measure $\gamma_k$ induces a barycenter problem for $n$ compactly supported measures,
	and $\mu_{\mathbb{P}_k}: = B_{\#} \gamma_k$ is the corresponding unique generalized Wasserstein barycenter
	thanks to the restriction property (c.f.\@ \cite[Theorem 5.19]{villani2009optimal}).
	According to \Cref{thm:absolute_continuity_compact}, $\mu_{\mathbb{P}_j}$
	is absolutely continuous,
	which implies that $\mu_{\mathbb{P}} = \gamma(K_1) \mu_{\mathbb{P}_1} + \cdots +
		\gamma(K_k) \mu_{\mathbb{P}_k} + \cdots$ is also absolutely continuous.
\end{proof}



\bibliography{bibliography.bib}
\end{document}